\numberwithin{equation}{section}
\newtheorem{thm}{Theorem}[section]
\theoremstyle{definition}
\theoremstyle{remark}
\newtheorem{rem}{Remark}[section]
\DeclareMathOperator{\td}{d}
\begin{document}

\title[Refinements of Young's integral inequality]
{Refinements of Young's integral inequality via fundamental inequalities and mean value theorems for derivatives}

\author[F. Qi]{Feng Qi}
\address[Qi]{College of Mathematics, Inner Mongolia University for Nationalities, Tongliao 028043, Inner Mongolia, China; Institute of Mathematics, Henan Polytechnic University, Jiaozuo 454010, Henan, China; School of Mathematical Sciences, Tianjin Polytechnic University, Tianjin 300387, China}
\email{\href{mailto: F. Qi <qifeng618@gmail.com>}{qifeng618@gmail.com}, \href{mailto: F. Qi <qifeng618@hotmail.com>}{qifeng618@hotmail.com}, \href{mailto: F. Qi <qifeng618@qq.com>}{qifeng618@qq.com}}
\urladdr{\url{https://qifeng618.wordpress.com}}

\author[W.-H. Li]{Wen-Hui Li}
\address[Li]{Department of Fundamental Courses, Zhenghzou University of Science and Technology, Zhengzhou 450064, Henan, China}
\email{\href{mailto: W.-H. Li <wen.hui.li@foxmail.com>}{wen.hui.li@foxmail.com}, \href{mailto: W.-H. Li <wen.hui.li102@gmail.com>}{wen.hui.li102@gmail.com}}
\urladdr{\url{https://orcid.org/0000-0002-1848-8855}}

\author[G.-S. Wu]{Guo-Sheng Wu}
\address[Wu]{School of Computer Science, Sichuan Technology and Business University, Chengdu 611745, Sichuan, China}
\email{mrwuguosheng@sina.com}

\author[B.-N. Guo]{Bai-Ni Guo}
\address[Guo]{School of Mathematics and Informatics, Henan Polytechnic University, Jiaozuo 454010, Henan, China}
\email{\href{mailto: B.-N. Guo <bai.ni.guo@gmail.com>}{bai.ni.guo@gmail.com}, \href{mailto: B.-N. Guo <bai.ni.guo@hotmail.com>}{bai.ni.guo@hotmail.com}}
\urladdr{\url{https://orcid.org/0000-0001-6156-2590}}

\dedicatory{Dedicated to people facing and fighting 2019-nCoV}

\begin{abstract}
In the paper, the authors review several refinements of Young's integral inequality via several mean value theorems, such as Lagrange's and Taylor's mean value theorems of Lagrange's and Cauchy's type remainders, and via several fundamental inequalities, such as \v{C}eby\v{s}ev's integral inequality, Hermite--Hadamard's type integral inequalities, H\"older's integral inequality, and Jensen's discrete and integral inequalities, in terms of higher order derivatives and their norms, survey several applications of several refinements of Young's integral inequality, and further refine Young's integral inequality via P\'olya's type integral inequalities.
\end{abstract}

\keywords{refinement; Young's integral inequality; monotonic function; inverse function; convex function; P\'olya's type integral inequality; Lagrange's mean value theorem; Taylor's mean value theorem; Lagrange's type remainder; Cauchy's type remainder; higher order derivative; \v{C}eby\v{s}ev's integral inequality; Hermite--Hadmard's integral inequality; H\"older's integral inequality; Jensen's inequality; norm}

\subjclass[2010]{Primary 26D15; Secondary 26A42, 26A48, 26A51, 26D05, 26D07, 33B10, 33B20, 41A58}

\maketitle
\tableofcontents

\section{Young's integral inequality and several refinements}

In the first part of this paper, we mainly review several refinements of Young's integral inequality via several mean value theorems, such as Lagrange's and Taylor's mean value theorems of Lagrange's and Cauchy's type remainders, and via several fundamental inequalities, such as \v{C}eby\v{s}ev's integral inequality, Hermite--Hadamard's type integral inequalities, H\"older's integral inequality, and Jensen's discrete and integral inequalities, in terms of higher order derivatives and their norms, and simply survey several applications of several refinements of Young's integral inequality.

\subsection{Young's integral inequality}

One of fundamental and general inequalities in mathematics is Young's integral inequality below.

\begin{thm}[\cite{Young-Roy-London-1912}]\label{Young-orig-thm}
Let $h(x)$ be a real-valued, continuous, and strictly increasing function on $[0,c]$ with $c>0$. If $h(0)=0$, $a\in[0,c]$, and $b\in[0,h(c)]$, then
\begin{equation}\label{Young-eq1}
\int_0^ah(x)\td x+\int_0^bh^{-1}(x)\td x\ge ab,
\end{equation}
where $h^{-1}$ denotes the inverse function of $h$. The equality in~\eqref{Young-eq1} is valid if and only if $b=h(a)$.
\end{thm}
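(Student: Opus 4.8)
The plan is to reduce the inequality~\eqref{Young-eq1} to the single borderline case $b=h(a)$, in which the two sides coincide, and then to exploit the monotonicity of $h^{-1}$ to show that displacing $b$ away from $h(a)$ can only increase the left-hand side. The whole argument rests on one identity together with a sign computation, and requires nothing beyond the stated hypotheses: since $h$ is continuous and strictly increasing on $[0,c]$, its inverse $h^{-1}$ is continuous and strictly increasing on $[0,h(c)]$, so both integrands are Riemann integrable.

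First I would record the geometric identity
\begin{equation}\label{young-key}
\int_0^a h(x)\td x+\int_0^{h(a)}h^{-1}(x)\td x=a\,h(a),
\end{equation}
valid for every $a\in[0,c]$. Equation~\eqref{young-key} expresses that the region under $y=h(x)$ over $[0,a]$ and the region under $y=h^{-1}(x)$ over $[0,h(a)]$ tile the rectangle $[0,a]\times[0,h(a)]$: reflecting the second region across the diagonal $y=x$ converts it into $\{(x,y):0\le x\le a,\ h(x)\le y\le h(a)\}$, which, together with $\{(x,y):0\le x\le a,\ 0\le y\le h(x)\}$, exactly fills the rectangle and overlaps it only along the curve $y=h(x)$. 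If one is willing to assume $h\in C^1$, the same identity drops out of the substitution $x=h(t)$ followed by integration by parts; the area argument, however, needs only continuity and strict monotonicity, and this is the single delicate point of the proof.

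Next I would fix $a$ and study the auxiliary function
\begin{equation*}
G(b)=\int_0^a h(x)\td x+\int_0^b h^{-1}(x)\td x-ab .
\end{equation*}
Subtracting its value at $b=h(a)$ and invoking~\eqref{young-key}, which gives $G(h(a))=0$, yields
\begin{equation*}
G(b)=G(b)-G(h(a))=\int_{h(a)}^b\bigl[h^{-1}(x)-a\bigr]\td x .
\end{equation*}
Since $h^{-1}$ is strictly increasing with $h^{-1}(h(a))=a$, the factor $h^{-1}(x)-a$ has the same sign as $x-h(a)$; consequently the integrand and the orientation of integration reverse sign together as $b$ crosses $h(a)$, so the integral is $\ge0$ for every admissible $b$. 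This is precisely~\eqref{Young-eq1}.

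Finally, the equality discussion is immediate from this representation. For $b\ne h(a)$ the continuity and strict monotonicity of $h^{-1}$ force $h^{-1}(x)-a\ne0$ throughout the open interval between $h(a)$ and $b$, where the integrand keeps a single strict sign; hence the displayed (oriented) integral is strictly positive, and $G(b)>0$. Therefore equality holds in~\eqref{Young-eq1} if and only if $b=h(a)$. The main obstacle, as noted, is establishing~\eqref{young-key} under the bare assumptions of continuity and strict monotonicity; once that identity is secured, the inequality and its equality case follow from the sign of a single integrand.
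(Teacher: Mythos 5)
Your proof is correct, and it takes a genuinely different route from the paper's. The paper fixes $b$ and maximizes $f(a)=ab-\int_0^ah(x)\td x$ over $a$, locating the maximum at $a=h^{-1}(b)$ and then identifying $f\bigl(h^{-1}(b)\bigr)=\int_0^bh^{-1}(y)\td y$ by integration by parts and the substitution $y=h(x)$ --- steps that tacitly use differentiability of $h$, which is not among the theorem's hypotheses. You instead fix $a$, establish the equality case $\int_0^ah(x)\td x+\int_0^{h(a)}h^{-1}(x)\td x=ah(a)$ by the tiling identity, and then read off the sign of $G(b)=\int_{h(a)}^{b}\bigl[h^{-1}(x)-a\bigr]\td x$ from the monotonicity of $h^{-1}$. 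This buys you two things: the argument is valid under the bare hypotheses of continuity and strict monotonicity (the tiling identity can be made fully rigorous by a telescoping Riemann-sum computation, $\sum\bigl[x_ih(x_i)-x_{i-1}h(x_{i-1})\bigr]=ah(a)$, with no area heuristics needed), and it exhibits the gap $\int_0^ah+\int_0^bh^{-1}-ab$ explicitly as the single signed integral $\int_{h(a)}^{b}\bigl[h^{-1}(x)-a\bigr]\td x$, which is precisely the $y$-axis counterpart of the quantity $\int_{h^{-1}(b)}^{a}[h(x)-b]\td x$ that the paper's identity~\eqref{hoorfar-qi-eq5} isolates and that all the later refinements estimate. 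The paper's optimization argument, by contrast, gives the pleasant interpretation of $\int_0^bh^{-1}$ as a maximum value but delivers the equality case less directly. One small caveat: your closing sentence of the main argument ("the integrand and the orientation of integration reverse sign together") is doing real work and deserves the explicit two-case split you only supply later in the equality discussion; written out, both cases give $G(b)>0$ for $b\ne h(a)$, so the inequality and its equality case come out in one stroke.
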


\begin{proof}
This proof is adapted from the proof of~\cite[Section~2.7, Theorem~1]{mit}.
\par
Set
\begin{equation}\label{mit2}
f(a)=ab-\int_0^ah(x)\td x
\end{equation}
and consider $b>0$ as a parameter. Since $f'(a)=b-h(a)$ and $h$ is strictly increasing, one obtains
\begin{equation*}
f'(a)
\begin{cases}
>0, & 0<a<h^{-1}(b);\\
=0, & a=h^{-1}(b);\\
<0, & a>h^{-1}(b).
\end{cases}
\end{equation*}
This means that $f(a)$ has a maximum of $f$ at $a=h^{-1}(b)$. Therefore, it follows that
\begin{equation}\label{mit3}
f(a)\le\max\{f(x)\}=f\bigl(h^{-1}(b)\bigr).
\end{equation}
Integrating by parts gives
\begin{equation*}
f\bigl(h^{-1}(b)\bigr)=bh^{-1}(b)-\int_0^{h^{-1}(b)}h(x)\td x
=\int_{0}^{h^{-1}(b)}xh'(x)\td x.
\end{equation*}
Substituting $y=h(x)$ into the above integral yields
\begin{equation}\label{mit4}
f\bigl(h^{-1}(b)\bigr)=\int_{0}^{b}h^{-1}(y)\td y.
\end{equation}
Putting~\eqref{mit2} and~\eqref{mit4} into~\eqref{mit3} results in~\eqref{Young-eq1}. The proof of Theorem~\ref{Young-orig-thm} is complete.
\end{proof}

\begin{rem}
The geometric interpretation of Young's integral inequality~\eqref{Young-eq1} can be demonstrated by Figures~\ref{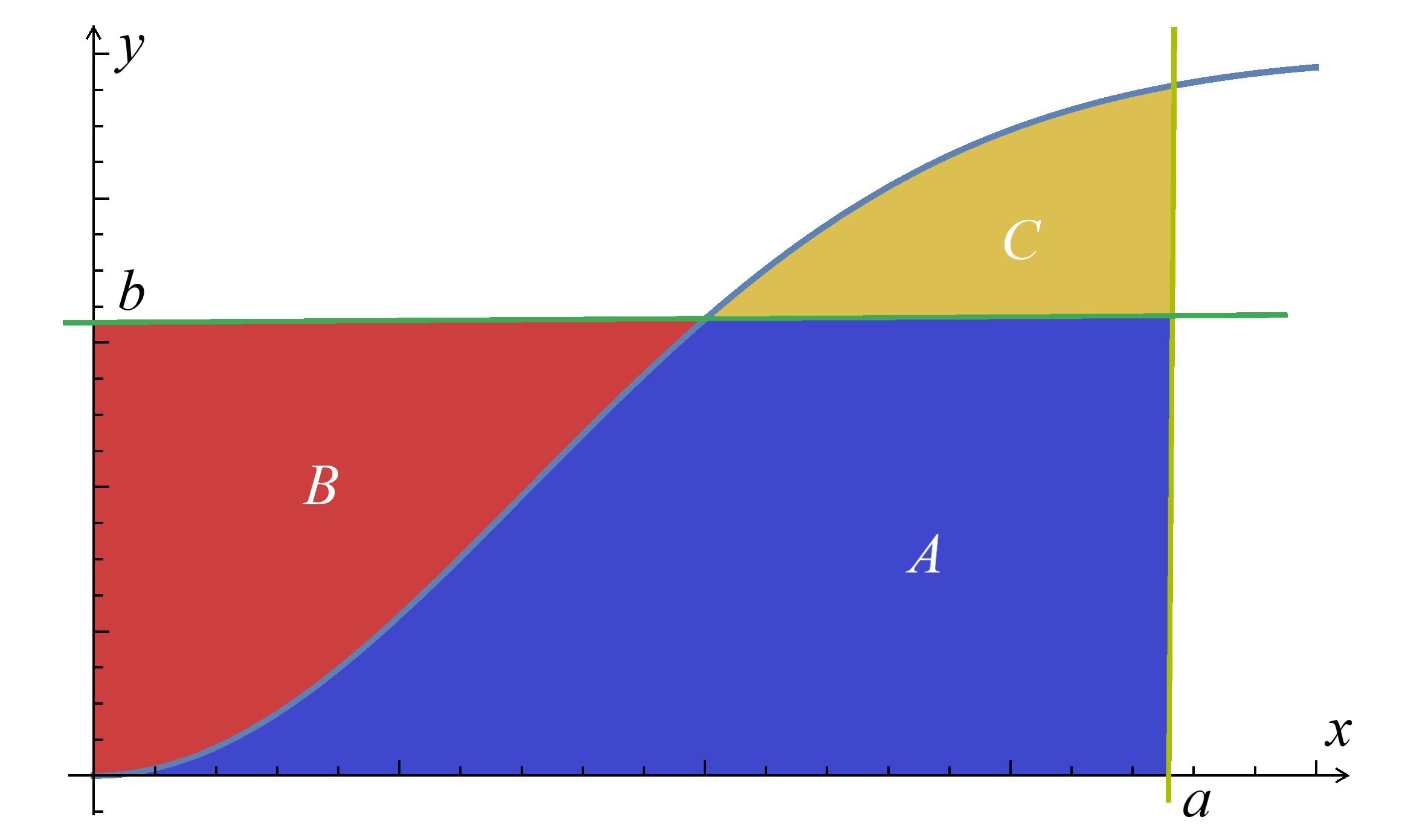} and~\ref{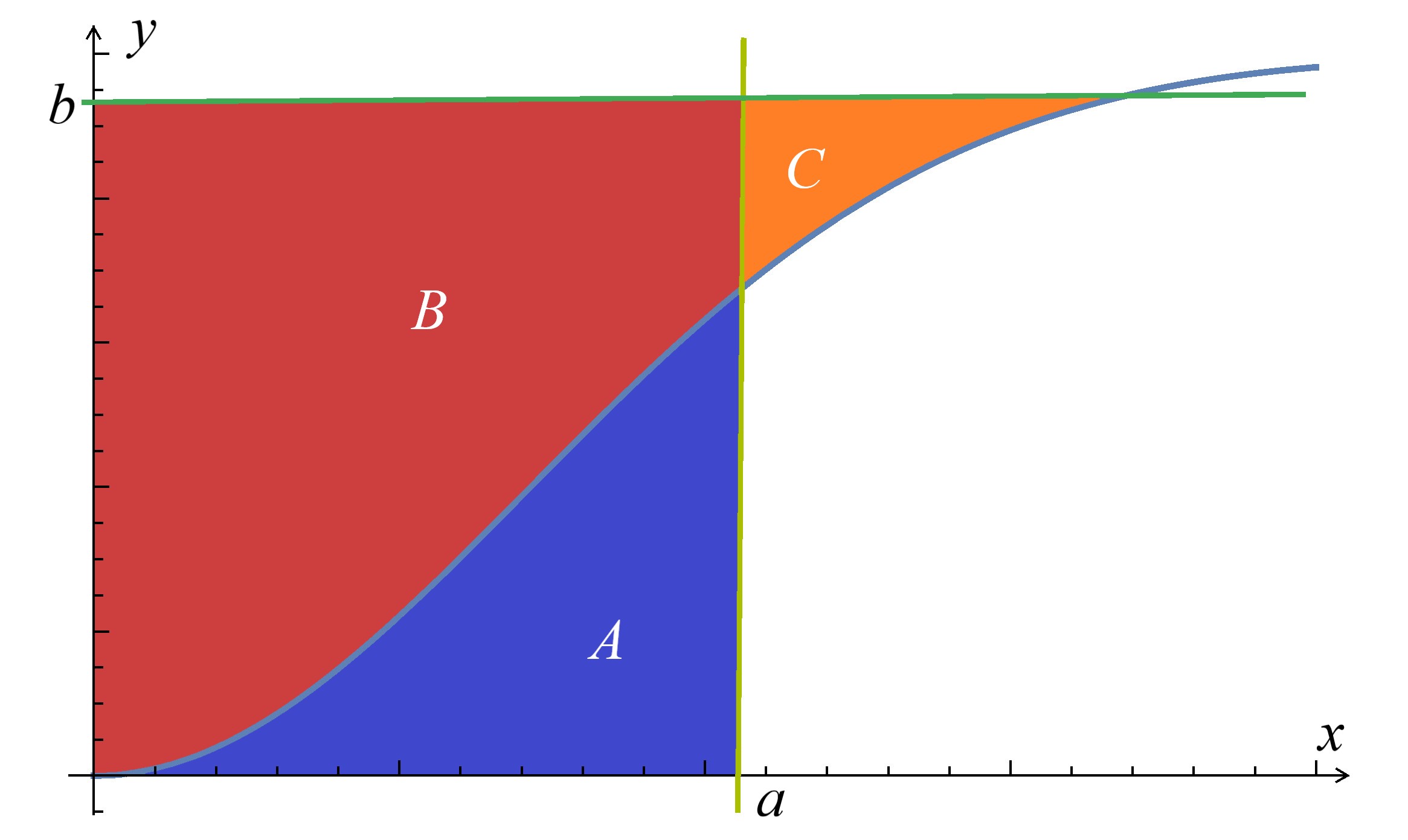}.
\begin{figure}[hbtp]
  \centering
  \includegraphics[width=0.8\textwidth]{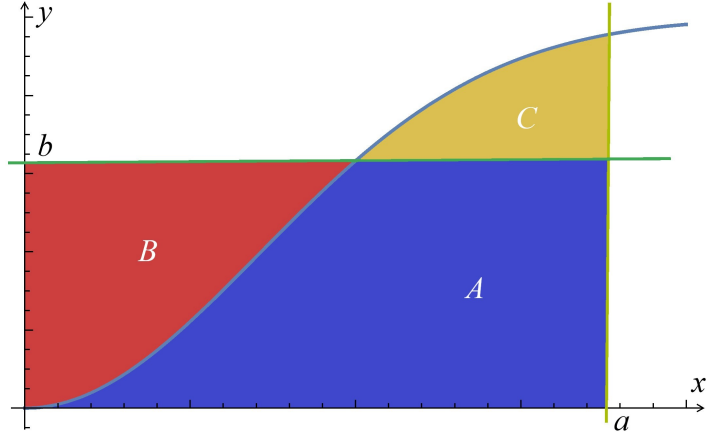}
  \caption{Geometric interpretation of the inequality~\eqref{Young-eq1}}\label{Young-Geom-M-1.jpg}
\end{figure}
\begin{figure}[hbtp]
  \centering
  \includegraphics[width=0.8\textwidth]{Young-Geom-M-2.jpg}
  \caption{Geometric interpretation of the inequality~\eqref{Young-eq1}}\label{Young-Geom-M-2.jpg}
\end{figure}
\par
In Figure~\ref{Young-Geom-M-1.jpg}, we have
\begin{gather*}
A+C=\int_0^ah(x)\td x,\quad A+B=ab,\quad B=\int_0^bh^{-1}(x)\td x,\\
A+B+C=\int_0^ah(x)\td x+\int_0^bh^{-1}(x)\td x\ge ab=A+B.
\end{gather*}
Therefore, the inequality~\eqref{Young-eq1} means that the area
\begin{equation}\label{C-area-exp-1}
C=\int_{h^{-1}(b)}^{a}h(x)\td x-b\bigl[a-h^{-1}(b)\bigr]\ge0.
\end{equation}
\par
In Figure~\ref{Young-Geom-M-2.jpg}, we have
\begin{gather*}
A=\int_0^ah(x)\td x,\quad A+B=ab,\quad B+C=\int_0^bh^{-1}(x)\td x,\\
A+B+C=\int_0^ah(x)\td x+\int_0^bh^{-1}(x)\td x\ge ab=A+B.
\end{gather*}
Therefore, the inequality~\eqref{Young-eq1} means that the area
\begin{equation}\label{C-area-exp-2}
C=b\bigl[h^{-1}(b)-a\bigr]-\int^{h^{-1}(b)}_{a}h(x)\td x\ge0.
\end{equation}
\end{rem}

\begin{rem}\label{C-area-exp-unif-rem}
We notice that two expressions~\eqref{C-area-exp-1} and~\eqref{C-area-exp-2} are of the same form
\begin{equation}\label{C-area-exp-unif}
C=\int_{h^{-1}(b)}^{a}h(x)\td x-ab+bh^{-1}(b)\ge0,
\end{equation}
no matter which of $a$ and $h^{-1}(b)$ is smaller or bigger.
\end{rem}

\begin{rem}
When $p>1$, taking $h(x)=x^{p-1}$ in~\eqref{Young-eq1} derives
\begin{equation*}
\frac{1}{p}a^p+\frac{1}{q}b^{q}\ge ab
\end{equation*}
for $a,b\ge0$ and $p,q>1$ satisfying $\frac1p+\frac1q=1$. Further replacing $a^p$ and $b^q$ by $x$ and $y$ respectively leads to
\begin{equation}\label{weight-arith-geom-ineq}
x^{1/p}y^{1/q}\le\frac{x}{p}+\frac{y}{q}
\end{equation}
for $x,y\ge0$ and $a,q>1$ satisfying $\frac{1}{p}+\frac{1}{q}=1$. Perhaps this is why the weighted arithmetic-geometric inequality~\eqref{weight-arith-geom-ineq} is also called Young's inequality in~\cite{Dragomir-RACSAM-2017, Dragomir-2018-spec-matrices, Korus-Hungar-2017} and closely related references therein.
\end{rem}

\begin{rem}
The inequality
\begin{equation*}
\sum_{k=1}^n\frac{\cos(k\theta)}{k}>-1,\quad n\ge2,\quad \theta\in[0,\pi]
\end{equation*}
is also called Young's inequality in~\cite{alzer-kwong-jmaa-2019, alzer-koumandos-edinb-2007} and closely related references therein.
\end{rem}

\begin{rem}
In~\cite[Secton~2.7]{mit} and~\cite[Chapter~XIV]{mpf-1993}, a plenty of refinements, extensions, generalizations, and applications of Young's integral inequality~\eqref{Young-eq1} were collected, reviewed, and surveyed. For some new and recent development on this topic after 1990, please refer to the papers~\cite{Anderson-jipam, Ruthing-Young, T.32, W.06, Zhu-Young} and closely related references therein.
\end{rem}

\subsection{Refinements of Young's integral inequality via Lagrange's mean value theorem}

In 2008, Hoorfar and Qi refined Young's integral inequality~\eqref{Young-eq1} via Lagrange's mean value theorem for derivatives.

\begin{thm}[{\cite[Theorem~1]{Young-Hoorfar-Qi.tex}}]\label{hoorfar-qi-young}
Let $h(x)$ be a differentiable and strictly increasing function on $[0,c]$ for $c>0$ and let $h^{-1}$ be the inverse function of $h$. If $h(0)=0$, $a\in[0,c]$, $b\in[0,h(c)]$, and $h'(x)$ is strictly monotonic on $[0,c]$, then
\begin{equation}\label{Hoofar-Qi-eq4}
\frac{m}{2}\bigl[a-h^{-1}(b)\bigr]^{2}
\le\int_0^ah(x)\td x+\int_0^bh^{-1}(x)\td x-ab
\le\frac{M}{2}\bigl[a-h^{-1}(b)\bigr]^{2},
\end{equation}
where
\begin{equation*}
m=\min\bigl\{h'(a), h'\bigl(h^{-1}(b)\bigr)\bigr\}
\end{equation*}
and
\begin{equation*}
M=\max\bigl\{h'(a), h'\bigl(h^{-1}(b)\bigr)\bigr\}.
\end{equation*}
The equalities in~\eqref{Hoofar-Qi-eq4} are valid if and only if $b=h(a)$.
\end{thm}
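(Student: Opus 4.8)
The plan is to start from the unified area expression~\eqref{C-area-exp-unif} recorded in Remark~\ref{C-area-exp-unif-rem}, recast the middle quantity of~\eqref{Hoofar-Qi-eq4} as a single integral, and then pinch that integral between the two bounds by invoking Lagrange's mean value theorem together with the monotonicity of $h'$. First I would abbreviate $\alpha=h^{-1}(b)$, so that $h(\alpha)=b$, and write $C$ for the difference appearing in~\eqref{Hoofar-Qi-eq4}. By Remark~\ref{C-area-exp-unif-rem} we have $C=\int_\alpha^a h(x)\td x-ab+b\alpha$, and substituting $b=h(\alpha)$ absorbs the constant terms into the integrand, giving
\[
C=\int_\alpha^a\bigl[h(x)-h(\alpha)\bigr]\td x .
\]
The point of passing through~\eqref{C-area-exp-unif} is that this identity is valid no matter which of $\alpha$ and $a$ is the larger, which is precisely the uniformity observed in Remark~\ref{C-area-exp-unif-rem}.

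The key step is to apply Lagrange's mean value theorem to $h$ on the interval with endpoints $\alpha$ and $x$: for each $x$ strictly between $\alpha$ and $a$ there is $\xi_x$ strictly between $\alpha$ and $x$ with $h(x)-h(\alpha)=h'(\xi_x)(x-\alpha)$. Since $\xi_x$ then lies in the interval with endpoints $\alpha$ and $a$, and since $h'$ is monotonic on $[0,c]$, the factor $h'(\xi_x)$ is squeezed between $m=\min\bigl\{h'(a),h'(\alpha)\bigr\}$ and $M=\max\bigl\{h'(a),h'(\alpha)\bigr\}$. Hence the integrand $h'(\xi_x)(x-\alpha)$ lies between $m(x-\alpha)$ and $M(x-\alpha)$, and since $\int_\alpha^a(x-\alpha)\td x=\frac12(a-\alpha)^2$ is nonnegative and orientation-independent, integrating delivers exactly
\[
\frac{m}{2}\bigl[a-h^{-1}(b)\bigr]^{2}\le C\le\frac{M}{2}\bigl[a-h^{-1}(b)\bigr]^{2},
\]
which is~\eqref{Hoofar-Qi-eq4}.

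The step I expect to require care is the bookkeeping of orientation, since the pointwise comparison $m(x-\alpha)\le h'(\xi_x)(x-\alpha)\le M(x-\alpha)$ reverses when $x-\alpha<0$, so I must check that integrating over $[\alpha,a]$ in the case $a<\alpha$ still yields the stated one-sided bounds rather than their reverses. To make this airtight in a single stroke I would instead integrate the auxiliary function $\phi(x)=h(x)-h(\alpha)-m(x-\alpha)$, which satisfies $\phi(\alpha)=0$ and $\phi'(x)=h'(x)-m\ge0$ on the relevant interval, so $\phi$ is nondecreasing there. Then $C-\frac{m}{2}(a-\alpha)^2=\int_\alpha^a\phi(x)\td x\ge0$ whether $a>\alpha$ (where $\phi\ge0$ on $[\alpha,a]$) or $a<\alpha$ (where $\phi\le0$ on $[a,\alpha]$ and the orientation supplies a second sign change); the upper bound follows symmetrically from $\psi(x)=M(x-\alpha)-[h(x)-h(\alpha)]$.

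Finally, for the equality case the condition $b=h(a)$ is equivalent to $\alpha=a$, in which event $C=0$ and both outer expressions vanish, giving equality throughout. Conversely, if $a\ne\alpha$, then the strict monotonicity of $h'$ forces $m<h'(\xi_x)<M$ on a subinterval of positive length, so both inequalities in~\eqref{Hoofar-Qi-eq4} are strict; this completes the characterization of equality.
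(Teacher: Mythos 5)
Your proposal is correct and follows essentially the same route as the paper: both reduce the middle expression of~\eqref{Hoofar-Qi-eq4} to $\int_{h^{-1}(b)}^{a}[h(x)-b]\,\td x$ and bound that integral by pinching $h'$ between $m$ and $M$ on the interval joining $a$ and $h^{-1}(b)$. Your auxiliary functions $\phi$ and $\psi$ merely make the orientation bookkeeping for the case $a<h^{-1}(b)$ explicit, where the paper writes ``by a similar argument''; this is a presentational improvement, not a different method.
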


\begin{proof}
This is a modification of the proof of~\cite[Theorem~1]{Young-Hoorfar-Qi.tex} in~\cite[Section~2]{Young-Hoorfar-Qi.tex}.
\par
Changing the variable of integration by $x=h(y)$ and integrating by parts yield
\begin{equation}\label{hoorfar-qi-eq5}
\begin{aligned}
\int^{a}_{0}h(x)\td x+\int^{b}_{0}h^{-1}(x)\td x
&=\int^{a}_{0}h(x)\td x+\int^{h^{-1}(b)}_{0}yh'(y)\td y \\
&=\int^{a}_{0}h(x)\td x +bh^{-1}(b)-\int^{h^{-1}(b)}_{0}h(x)\td x\\
&=bh^{-1}(b)+\int^{a}_{h^{-1}(b)}h(x)\td x\\
&=ab+\int^{a}_{h^{-1}(b)}[h(x)-b]\td x.
\end{aligned}
\end{equation}
From the last line in~\eqref{hoorfar-qi-eq5}, we can see that, if $h^{-1}(b)=a$, then those equalities in~\eqref{Hoofar-Qi-eq4} hold.
\par
If $h^{-1}(b)<a$, since $h(x)$ is strictly increasing, then $h(x)-b>0$ for $x\in\bigl(h^{-1}(b),a\bigr)$. By Lagrange's mean value theorem for derivatives, we can see that there exists $\xi=\xi(x)$, satisfying $h^{-1}(b)<\xi<x\le a$, such that
$$
0<h(x)-b =\bigl[x-h^{-1}(b)\bigr]h'(\xi).
$$
By virtue of monotonicity of $h'(x)$ on $[0,c]$, we reveal that
\begin{equation*}
0<m=\min\bigl\{h'(a),h'\bigl(h^{-1}(b)\bigr)\bigr\} <h'(\xi)<\max\bigl\{h'(a), h'\bigl(h^{-1}(b)\bigr)\bigr\}=M.
\end{equation*}
Consequently, we have
\begin{equation*}
0<m\bigl[x-h^{-1}(b)\bigr]<h(x)-b<M\bigl[x-h^{-1}(b)\bigr].
\end{equation*}
As a result, we have
\begin{equation*}
m\int^{a}_{h^{-1}(b)}\bigl[x-h^{-1}(b)\bigr]\td x
<\int^{a}_{h^{-1}(b)} [h(x)-b]\td x
<M\int^{a}_{h^{-1}(b)}\bigl[x-h^{-1}(b)\bigr]\td x
\end{equation*}
which is equivalent to
\begin{equation}\label{eq6}
\frac{m}{2}\bigl[a-h^{-1}(b)\bigr]^{2} <\int^{a}_{h^{-1}(b)}[h(x)-b]\td x<\frac{M}{2}\bigl[a-h^{-1}(b)\bigr]^{2}.
\end{equation}
\par
If $h^{-1}(b)>a$, we can derive inequalities in~\eqref{eq6} by a similar argument as above.
\par
Substituting the double inequality~\eqref{eq6} into the equality~\eqref{hoorfar-qi-eq5} leads to the double inequality~\eqref{Hoofar-Qi-eq4}. The proof of Theorem~\ref{hoorfar-qi-young} is complete.
\end{proof}

\begin{rem}
The geometric interpretation of the double inequality~\eqref{Hoofar-Qi-eq4} is that the areas $C$ in Figures~\ref{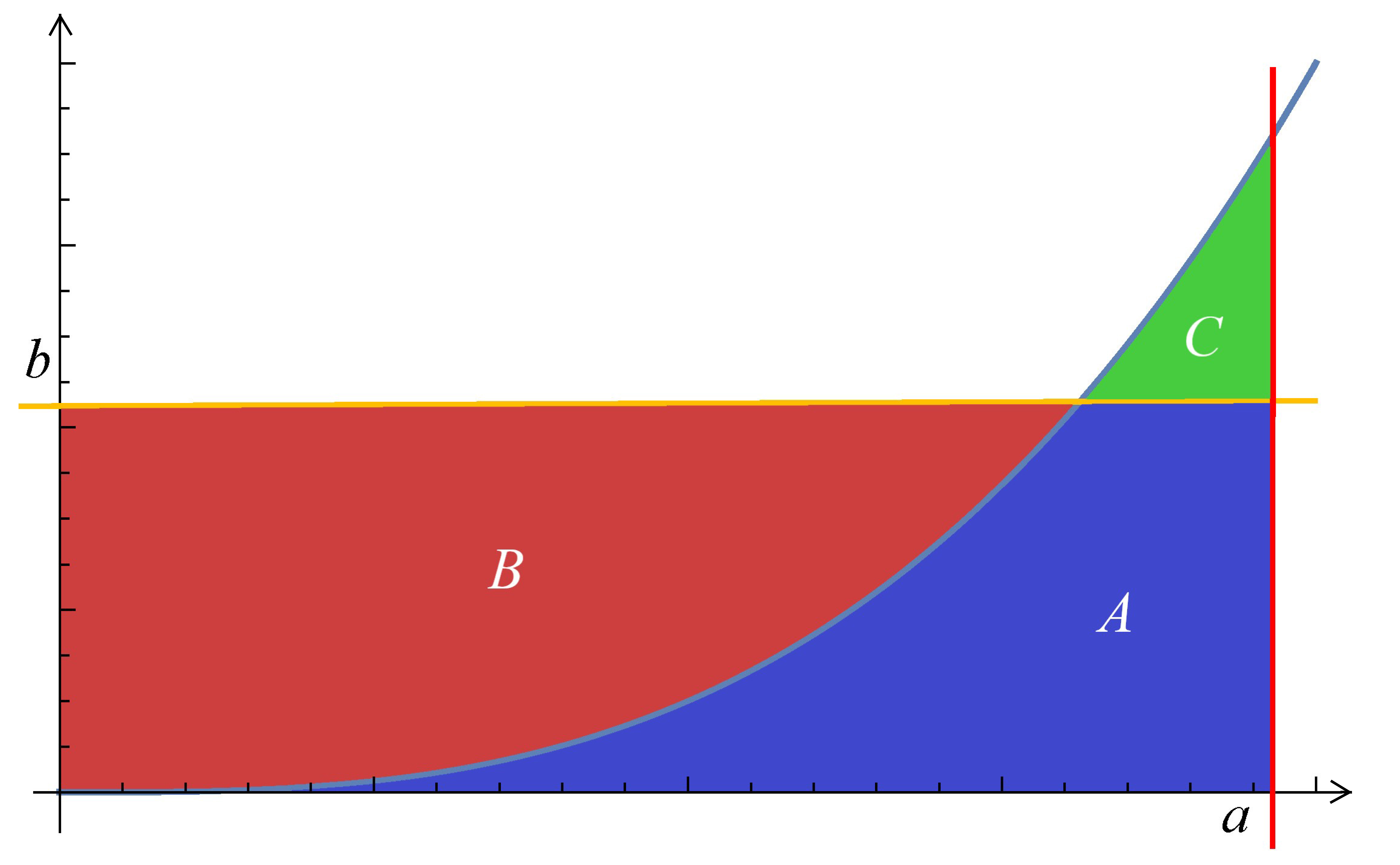} to~\ref{Young-Geom-C-2-1.jpg} satisfy
\begin{equation}\label{Area-C-Ineq}
\frac{m}{2}\bigl[a-h^{-1}(b)\bigr]^{2}
\le C
\le\frac{M}{2}\bigl[a-h^{-1}(b)\bigr]^{2}.
\end{equation}
When $h'(x)$ is strictly increasing, the double inequality~\eqref{Area-C-Ineq} can be equivalently written as
\begin{equation*}
\frac{h'(a)}{2}\bigl[a-h^{-1}(b)\bigr]
\le \frac{\int_{h^{-1}(b)}^{a}h(x)\td x}{a-h^{-1}(b)}
\le\frac{h'\bigl(h^{-1}(b)\bigr)}{2}\bigl[a-h^{-1}(b)\bigr]
\end{equation*}
and
\begin{equation*}
\frac{h'(a)}{2}\bigl[h^{-1}(b)-a\bigr]
\le \frac{\int^{h^{-1}(b)}_{a}h(x)\td x}{h^{-1}(b)-a}
\le\frac{h'\bigl(h^{-1}(b)\bigr)}{2}\bigl[h^{-1}(b)-a\bigr]
\end{equation*}
corresponding to Figures~\ref{Young-Geom-C-1-2.jpg} and~\ref{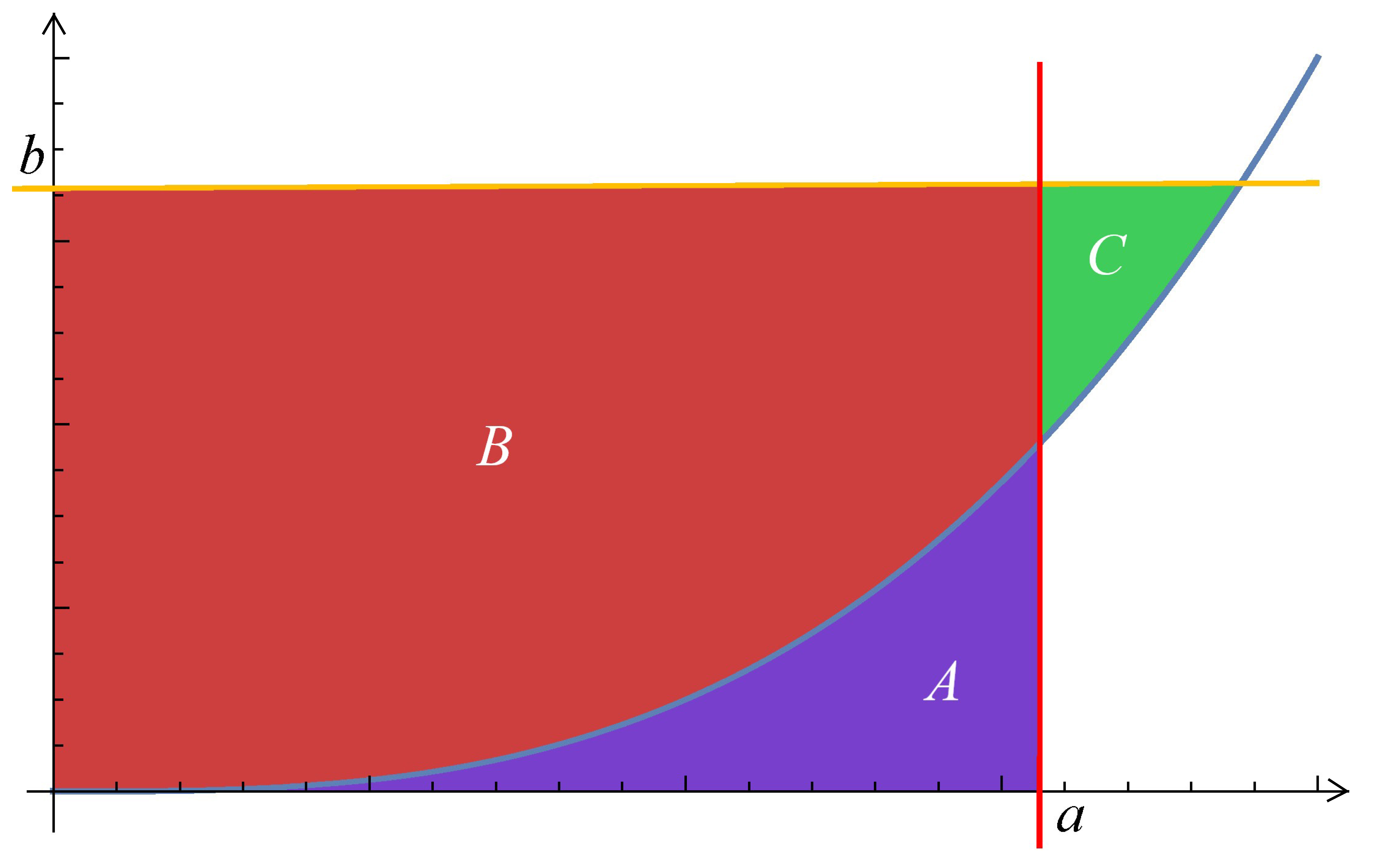} respectively.
\begin{figure}[htbp]
  \centering
  \includegraphics[width=0.8\textwidth]{Young-Geom-C-1-2.jpg}
  \caption{Geometric interpretation of the double inequality~\eqref{Hoofar-Qi-eq4}}\label{Young-Geom-C-1-2.jpg}
\end{figure}
\begin{figure}[hbtp]
  \centering
  \includegraphics[width=0.8\textwidth]{Young-Geom-C-2-2.jpg}
  \caption{Geometric interpretation of the double inequality~\eqref{Hoofar-Qi-eq4}}\label{Young-Geom-C-2-2.jpg}
\end{figure}
\begin{figure}[hbtp]
  \centering
  \includegraphics[width=0.8\textwidth]{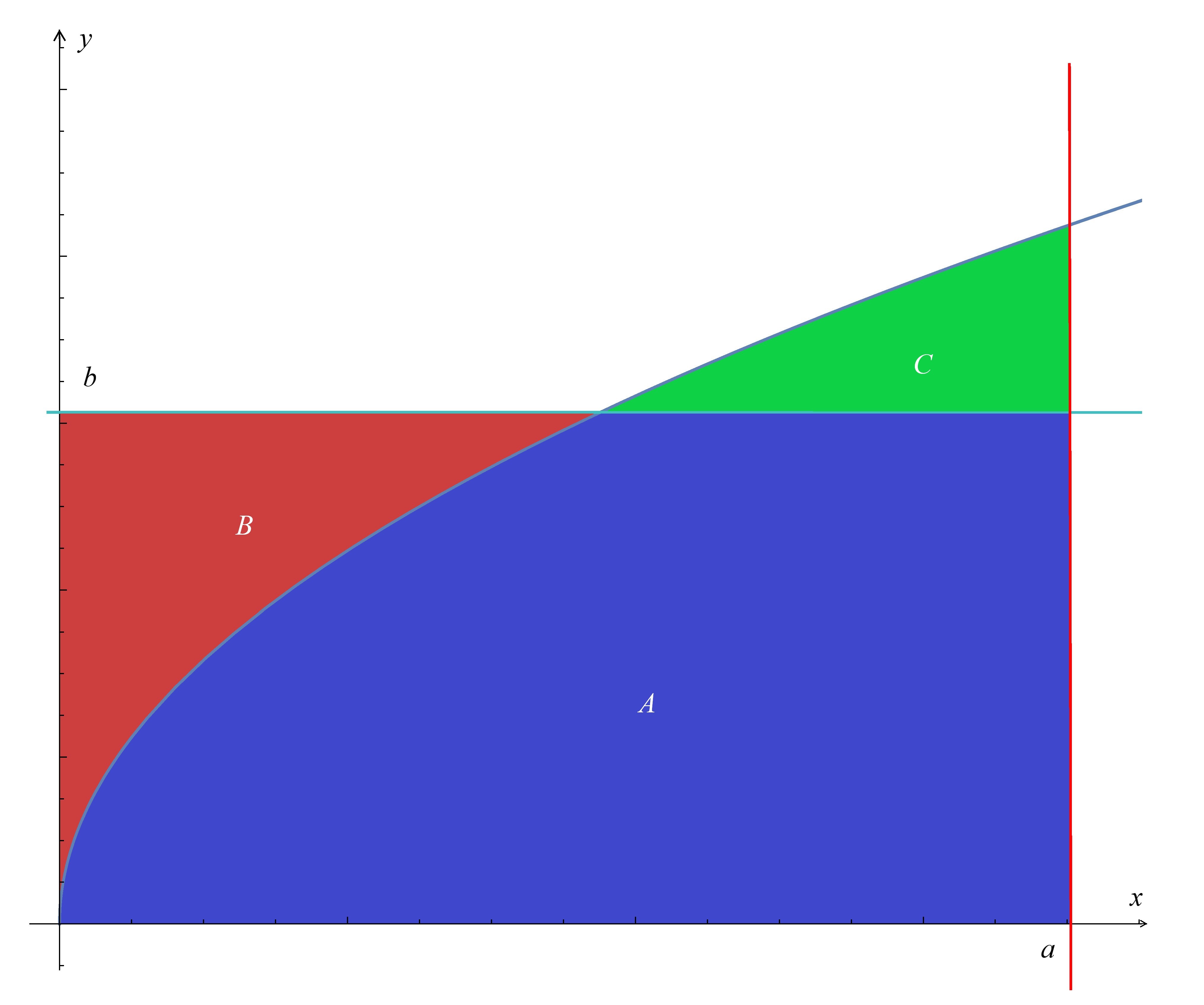}
  \caption{Geometric interpretation of the double inequality~\eqref{Hoofar-Qi-eq4}}\label{Young-Geom-C-1-1.jpg}
\end{figure}
\begin{figure}[hbtp]
  \centering
  \includegraphics[width=0.8\textwidth]{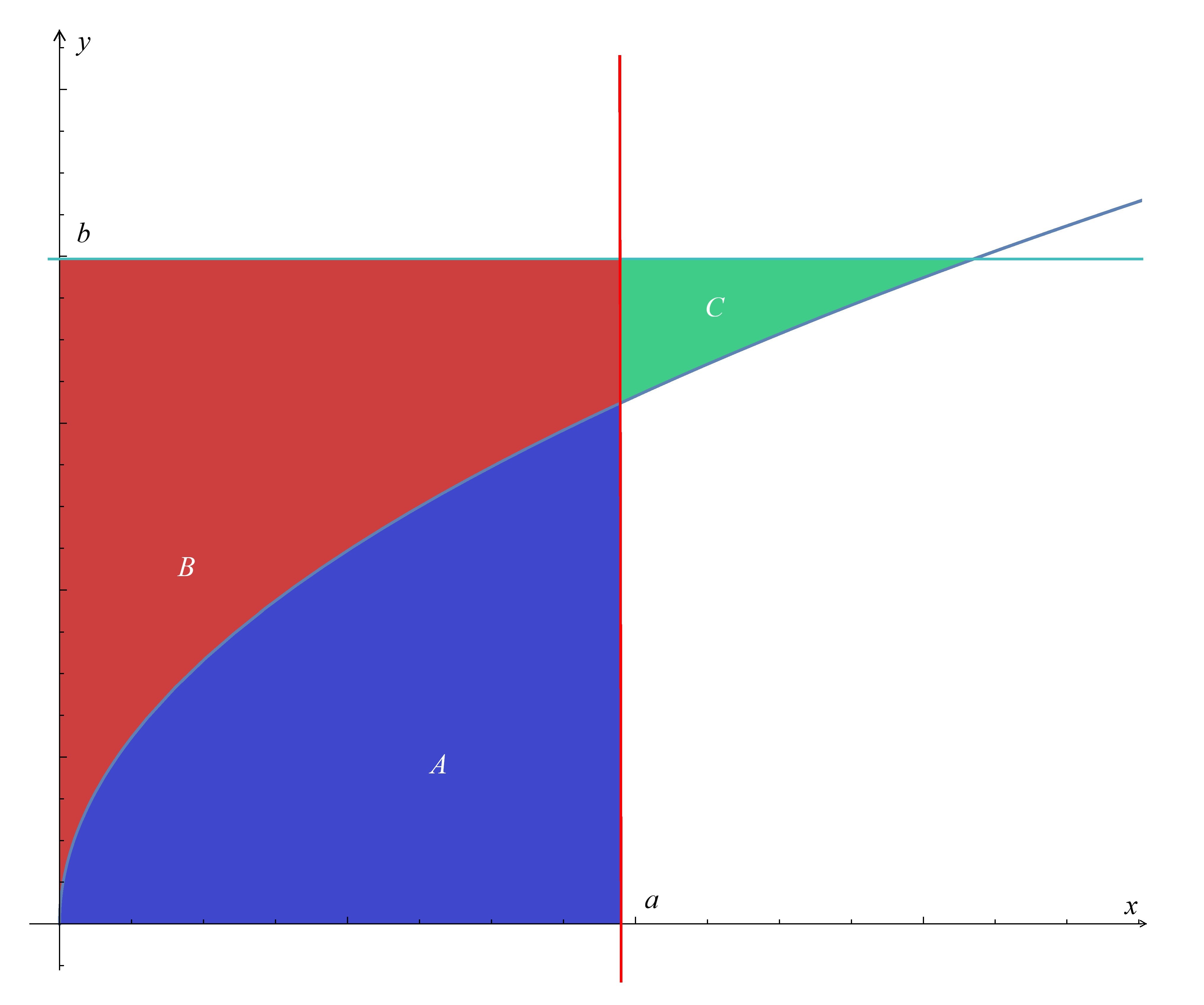}
  \caption{Geometric interpretation of the double inequality~\eqref{Hoofar-Qi-eq4}}\label{Young-Geom-C-2-1.jpg}
\end{figure}
\end{rem}

\begin{rem}
If $Q$ is a $\begin{matrix}\text{convex}\\ \text{concave}\end{matrix}$ function on $J$, then
\begin{equation}\label{HH-Ineq-Eq}
Q\biggl(\frac{\tau+\mu}{2}\biggr)\lesseqgtr\frac{1}{\mu-\tau}\int_{\tau}^{\mu}Q(x)\td x\lesseqgtr\frac{Q(\tau)+Q(\mu)}{2},
\end{equation}
where $J\subseteq\mathbb{R}$ is a nonempty interval and $\tau,\mu\in J$ with $\tau<\mu$. The double inequality~\eqref{HH-Ineq-Eq} is called Hermite--Hadamard's integral inequality for convex functions~\cite{JIAP-D-19-00904.tex, Psh-Kurd-JIA.tex, mathematics-360953.tex}.
When $a>h^{-1}(b)$, as showed in Figures~\ref{Young-Geom-C-1-2.jpg} and~\ref{Young-Geom-C-1-1.jpg}, and $h'(x)$ is strictly increasing, that is, the function $h(x)$ is convex, as showed in Figures~\ref{Young-Geom-C-1-2.jpg} and~\ref{Young-Geom-C-2-2.jpg}, applying the double inequality~\eqref{HH-Ineq-Eq} yields
\begin{equation*}
h\biggl(\frac{a+h^{-1}(b)}2\biggr)
\le \frac{\int_{h^{-1}(b)}^{a}h(x)\td x}{a-h^{-1}(b)}
\le\frac{h(a)+b}{2}.
\end{equation*}
Substituting this into the third line in~\eqref{hoorfar-qi-eq5} gives
\begin{gather*}
\int^{a}_{0}h(x)\td x+\int^{b}_{0}h^{-1}(x)\td x
\le ab+\frac{h(a)-b}{2}\bigl[a-h^{-1}(b)\bigr]
\end{gather*}
and
\begin{gather*}
\int^{a}_{0}h(x)\td x+\int^{b}_{0}h^{-1}(x)\td x
\ge bh^{-1}(b)+h\biggl(\frac{a+h^{-1}(b)}2\biggr)\bigl[a-h^{-1}(b)\bigr]\\
=ab+\biggl[h\biggl(\frac{a+h^{-1}(b)}2\biggr)-b\biggr]\bigl[a-h^{-1}(b)\bigr].
\end{gather*}
Equivalently speaking, it follows that the area $C$ satisfies
\begin{equation*}
\biggl[h\biggl(\frac{a+h^{-1}(b)}2\biggr)-b\biggr]\bigl[a-h^{-1}(b)\bigr]\le C
\le \frac{h(a)-b}{2}\bigl[a-h^{-1}(b)\bigr].
\end{equation*}
Similarly, we can discuss other cases, corresponding to Figures~\ref{Young-Geom-C-1-1.jpg} and~\ref{Young-Geom-C-2-1.jpg}, that the derivative $h'(x)$ is strictly decreasing.
\end{rem}

\begin{rem}
Mercer has applied and employed the double inequality~\eqref{Hoofar-Qi-eq4} in the paper~\cite{Mercer-2008-JMI} and in the Undergraduate Texts in Mathematics~\cite{Mercer-2014-UTM}.
\end{rem}

\subsection{Refinements of Young's integral inequality via Hermite--Hadamard's and \v{C}eby\v{s}ev's integral inequalities}

In 2009 and 2010, among other things, Jak\v{s}eti\'c and Pe\v{c}ari\'c refined Young's integral inequality~\eqref{Young-eq1} and Hoorfar--Qi's double inequality~\eqref{Hoofar-Qi-eq4} in~\cite{Jak-Pecaric-2009-AEJM, MIA-2010-13-43}.

\begin{thm}[{\cite[Theorem~2.1]{Jak-Pecaric-2009-AEJM} and~\cite[Theorem~2.3]{MIA-2010-13-43}}]\label{Jak-Pecaric-2009-Thm}
Let $h(x)$ be a differentiable and strictly increasing function on $[0,c]$ for $c>0$, $h(0)=0$, $a\in[0,c]$, $b\in[0,h(c)]$, and $h^{-1}$ be the inverse function of $h$. Denote
\begin{equation}\label{alpha-beta-def-eq}
\alpha=\min\bigl\{a,h^{-1}(b)\bigr\}\quad\text{and}\quad \beta=\max\bigl\{a,h^{-1}(b)\bigr\}.
\end{equation}
\begin{enumerate}
\item
If $h'(x)$ is increasing on $[\alpha,\beta]$ and $b<h(a)$, or if $h'(x)$ is decreasing on $[\alpha,\beta]$ and $b>h(a)$, then
\begin{equation}
\begin{aligned}\label{theorem2.3-mia}
\bigl[a-h^{-1}(b)\bigr]\biggl[h\biggl(\frac{a+h^{-1}(b)}2\biggr)-b\biggr]
&\le\int_0^ah(x)\td x+\int_0^bh^{-1}(x)\td x-ab\\
&\le\frac12\bigl[a-h^{-1}(b)\bigr][h(a)-b].
\end{aligned}
\end{equation}
\item
If $h'(x)$ is increasing on $[\alpha,\beta]$ and $b>h(a)$, or if $h'(x)$ is decreasing on $[\alpha,\beta]$ and $b<h(a)$, then the inequality~\eqref{theorem2.3-mia} is reversed.
\item
The equality in~\eqref{theorem2.3-mia} is valid if and only if $h(x)=\lambda x$ for $\lambda>0$ or $b=h(a)$.
\end{enumerate}
\end{thm}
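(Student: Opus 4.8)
The plan is to reduce the entire statement to the single integral identity already established inside the proof of Theorem~\ref{hoorfar-qi-young} and then to apply Hermite--Hadamard's inequality~\eqref{HH-Ineq-Eq} on the interval $[\alpha,\beta]$ defined in~\eqref{alpha-beta-def-eq}. Indeed, the last line of~\eqref{hoorfar-qi-eq5} shows that the middle quantity in~\eqref{theorem2.3-mia} is
\[
\int_0^ah(x)\td x+\int_0^bh^{-1}(x)\td x-ab=\int_{h^{-1}(b)}^{a}[h(x)-b]\td x,
\]
so the whole theorem becomes a two-sided estimate for this one integral. First I would record the elementary equivalences coming from strict monotonicity of $h$: the hypothesis $b<h(a)$ is the same as $h^{-1}(b)<a$, so that $\alpha=h^{-1}(b)$ and $\beta=a$, whereas $b>h(a)$ is the same as $h^{-1}(b)>a$, so that $\alpha=a$ and $\beta=h^{-1}(b)$; the borderline $b=h(a)$ collapses the interval and makes both bounds in~\eqref{theorem2.3-mia} vanish.

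Next I would dispatch the first alternative of part~(1), which is the clean template. Here $h'$ is increasing on $[\alpha,\beta]$, i.e.\ $h$ is convex there, and $b<h(a)$, so $\alpha=h^{-1}(b)$ and $\beta=a$. Writing the target integral as $\int_\alpha^\beta h(x)\td x-b(\beta-\alpha)$ and applying the convex case of~\eqref{HH-Ineq-Eq} to $Q=h$ gives
\[
(\beta-\alpha)h\biggl(\frac{\alpha+\beta}2\biggr)\le\int_\alpha^\beta h(x)\td x\le(\beta-\alpha)\frac{h(\alpha)+h(\beta)}2.
\]
Subtracting $b(\beta-\alpha)$ and substituting $h(\alpha)=h\bigl(h^{-1}(b)\bigr)=b$ and $h(\beta)=h(a)$ turns the two outer expressions into exactly the lower bound $\bigl[a-h^{-1}(b)\bigr]\bigl[h\bigl(\frac{a+h^{-1}(b)}2\bigr)-b\bigr]$ and the upper bound $\frac12\bigl[a-h^{-1}(b)\bigr][h(a)-b]$ of~\eqref{theorem2.3-mia}.

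The main obstacle is the sign bookkeeping in the second alternative of part~(1): $h'$ decreasing (so $h$ concave) together with $b>h(a)$, where now $\alpha=a<\beta=h^{-1}(b)$ and the integral runs backwards, $\int_{h^{-1}(b)}^{a}=-\int_{\alpha}^{\beta}$. In this regime Hermite--Hadamard's inequality~\eqref{HH-Ineq-Eq} is reversed, and the delicate point is to verify that the orientation-induced sign flip of the integral compensates precisely the reversal of the inequality, while the relabeling $h(\alpha)=h(a)$, $h(\beta)=b$ and the factor $-(\beta-\alpha)=a-h^{-1}(b)$ conspire to reproduce~\eqref{theorem2.3-mia} in the very same direction as in the convex case. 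Part~(2) then follows with no new work by swapping one ingredient (convex with $b>h(a)$, or concave with $b<h(a)$), which flips exactly one of the two sign contributions and hence reverses~\eqref{theorem2.3-mia}. Finally, for part~(3) I would invoke the equality condition of~\eqref{HH-Ineq-Eq}: equality throughout Hermite--Hadamard's inequality forces $h$ to be affine on $[\alpha,\beta]$, and combined with $h(0)=0$ this yields $h(x)=\lambda x$ with $\lambda>0$, while the only other source of equality is the degenerate interval $\alpha=\beta$, that is, $b=h(a)$.
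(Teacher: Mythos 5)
Your proposal is correct and follows essentially the same route as the paper: reduce everything to the single integral $\int_{h^{-1}(b)}^{a}[h(x)-b]\td x$ via~\eqref{hoorfar-qi-eq5} and apply Hermite--Hadamard's inequality~\eqref{HH-Ineq-Eq} to $h$ on $[\alpha,\beta]$, with exactly the orientation bookkeeping you describe (carried out in more detail than the paper, which leaves it implicit). The paper also sketches an alternative derivation of the upper bound by applying \v{C}eby\v{s}ev's inequality~\eqref{Cebysev-Ineq} to $\int_{h^{-1}(b)}^{a}(a-u)h'(u)\td u$, but that is supplementary and your HH-only argument suffices.
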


\begin{proof}
This is the outline of proofs of~\cite[Theorem~2.1]{Jak-Pecaric-2009-AEJM} and~\cite[Theorem~2.3]{MIA-2010-13-43}.
\par
From the third line in~\eqref{hoorfar-qi-eq5}, it follows that
\begin{equation}\label{Pecaric-Javic-eq}
\int^{a}_{0}h(x)\td x+\int^{b}_{0}h^{-1}(x)\td x-ab
=b\bigl[h^{-1}(b)-a\bigr]+\int^{a}_{h^{-1}(b)}h(x)\td x.
\end{equation}
Considering monotonicity of $h'(x)$ and applying the double inequality~\eqref{HH-Ineq-Eq} to the integrand in the last term of~\eqref{Pecaric-Javic-eq}, we can derive the double inequality~\eqref{theorem2.3-mia}.
\par
The last term in~\eqref{hoorfar-qi-eq5} can be rewritten as
\begin{multline}\label{2.2last-int-rew}
\int^{a}_{h^{-1}(b)}[h(x)-b]\td x
=\int^{a}_{h^{-1}(b)}\bigl[h(x)-h\bigl(h^{-1}(b)\bigr)\bigr]\td x\\
=\int^{a}_{h^{-1}(b)}\int_{h^{-1}(b)}^{x}h'(u)\td u\td x
=\int^{a}_{h^{-1}(b)}(a-u)h'(u)\td u.
\end{multline}
Let $f,g:[\mu,\nu]\to\mathbb{R}$ be integrable functions satisfying that they are both increasing or both decreasing. Then
\begin{equation}\label{Cebysev-Ineq}
\int_\mu^\nu f(x)\td x\int_\mu^\nu g(x)\td x
\le(\nu-\mu)\int_\mu^\nu f(x)g(x)\td x.
\end{equation}
If one of the function $f$ or $g$ is nonincreasing and the other nondecreasing, then the inequality in~\eqref{Cebysev-Ineq} is reversed.
The inequality~\eqref{Cebysev-Ineq} is called \v{C}eby\v{s}ev's integral inequality in the literature~\cite[Chapter~IX]{mpf-1993} and~\cite{tcheby-l.tex, Symmetry-Qi-Du-Nisar.tex}. Applying~\eqref{Cebysev-Ineq} to the last term in~\eqref{2.2last-int-rew} leads to the right hand side of the inequality~\eqref{theorem2.3-mia}.
The proof of Theorem~\ref{Jak-Pecaric-2009-Thm} is complete.
\end{proof}

\begin{rem}
The double inequality~\eqref{theorem2.3-mia} can be geometrically interpreted as
\begin{equation*}
\bigl[a-h^{-1}(b)\bigr]\biggl[h\biggl(\frac{a+h^{-1}(b)}2\biggr)-b\biggr]
\le C
\le\frac12\bigl[a-h^{-1}(b)\bigr][h(a)-b],
\end{equation*}
where $C$ denotes the area showed in Figures~\ref{Young-Geom-M-1.jpg} to~\ref{Young-Geom-C-2-1.jpg}.
\end{rem}

\subsection{Refinements of Young's integral inequality via Jensen's discrete and integral inequalities}
In~\cite[Theorem~2.6]{MIA-2010-13-43}, Jensen's discrete and integral inequalities were employed to establish the following inequalities which refine Young's integral inequality~\eqref{Young-eq1} and Hoorfar--Qi's double inequality~\eqref{Hoofar-Qi-eq4}.

\begin{thm}[{\cite[Theorem~2.6]{MIA-2010-13-43}}]\label{Jak-Pecaric-2010-Thm}
Let $h(x)$ be a differentiable and strictly increasing function on $[0,c]$ for $c>0$ and let $h^{-1}$ be the inverse function of $h$. If $h(0)=0$, $a\in[0,c]$, $b\in[0,h(c)]$, and $h'(x)$ is convex on $[\alpha,\beta]$, then
\begin{equation}
\begin{aligned}\label{theorem2.6-mia}
\frac{\bigl[a-h^{-1}(b)\bigr]^2}{2}h'\biggl(\frac{a+2h^{-1}(b)}{3}\biggr)
&\le \int_0^ah(x)\td x+\int_0^bh^{-1}(x)\td x-ab\\
&\le\frac{\bigl[a-h^{-1}(b)\bigr]^2}3\biggl[\frac{h'(a)}{2}+h'\bigl(h^{-1}(b)\bigr)\biggr].
\end{aligned}
\end{equation}
If $h'(x)$ is concave, then the double inequality~\eqref{theorem2.6-mia} is reversed.
\end{thm}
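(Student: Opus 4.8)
The plan is to start from the key identity already derived inside the proof of Theorem~\ref{Jak-Pecaric-2009-Thm}, namely equation~\eqref{2.2last-int-rew}, which, writing $t=h^{-1}(b)$ for brevity, reads
\[
\int_0^ah(x)\td x+\int_0^bh^{-1}(x)\td x-ab=\int_{t}^{a}(a-u)h'(u)\td u.
\]
Everything then reduces to estimating the weighted integral $\int_t^a(a-u)h'(u)\td u$ of the convex function $h'$ against the weight $a-u$. First I would record the two elementary facts that over the interval between $t$ and $a$ the factor $a-u$ (taken with the sign that makes it positive) has total mass $\int_t^a(a-u)\td u=\frac{(a-t)^2}{2}$ and barycentre $\int_t^a(a-u)u\td u\big/\int_t^a(a-u)\td u=\frac{a+2t}{3}$. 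This barycentre computation is the one genuinely computational step, but it is a short polynomial integration in which the factorization $a^2+at-2t^2=(a+2t)(a-t)$ makes the value $\frac{a+2t}{3}$ drop out cleanly.

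For the lower bound I would invoke Jensen's integral inequality: normalizing $a-u$ to a probability density on the interval between $t$ and $a$, convexity of $h'$ gives $h'\bigl(\frac{a+2t}{3}\bigr)\le\frac{2}{(a-t)^2}\int_t^a(a-u)h'(u)\td u$, which is exactly the left inequality in~\eqref{theorem2.6-mia} after multiplying through by $\frac{(a-t)^2}{2}$. For the upper bound I would use Jensen's discrete (two-point) inequality in the guise of the chord estimate: convexity of $h'$ on $[\alpha,\beta]$ means $h'(u)\le L(u)$, where $L$ is the affine interpolant through $\bigl(t,h'(t)\bigr)$ and $\bigl(a,h'(a)\bigr)$. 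Integrating this pointwise bound against the weight $a-u$ and using $\int_t^a(a-u)(u-t)\td u=\frac{(a-t)^3}{6}$ together with the total mass above collapses the right-hand side to $\frac{(a-t)^2}{3}\bigl[\frac{h'(a)}{2}+h'(t)\bigr]$, which is the right inequality in~\eqref{theorem2.6-mia}. Note that the weight $a-u$ vanishes at the endpoint $u=a$, which is precisely why $h'(a)$ receives the coefficient $\frac12$ and $h'(t)$ the coefficient $1$.

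Two bookkeeping points would round out the argument. First, the orderings $t<a$ and $t>a$ must both be covered; in the second case $a-u$ is negative on $(a,t)$, so I would rewrite the integral with the positive weight $u-a$, whereupon the same two computations reproduce the identical barycentre $\frac{a+2t}{3}$, total mass $\frac{(a-t)^2}{2}$, and both bounds, so that the stated inequalities are insensitive to which of $a$ and $h^{-1}(b)$ is larger, as in Remark~\ref{C-area-exp-unif-rem}. Second, the concave case follows verbatim by reversing every application of Jensen's inequality (both integral and discrete), which flips the double inequality as claimed; the degenerate case $a=h^{-1}(b)$, that is $b=h(a)$, makes all three quantities vanish.

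The main obstacle I anticipate is not any single estimate---each is a one-line application of a named Jensen inequality---but rather the care needed to keep the sign of the weight $a-u$ and the orientation of the integral consistent across the two orderings, so that in every case Jensen is applied to a genuine normalized positive weight and in the correct direction. Once the weight is normalized correctly, the barycentre $\frac{a+2t}{3}$ and the endpoint coefficients $\frac12$ and $1$ on $h'(a)$ and $h'(t)$ are forced by the two elementary integrals above, and no further input is required.
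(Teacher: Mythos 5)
Your proposal is correct and follows essentially the same route as the paper: both reduce the problem via~\eqref{2.2last-int-rew} to the weighted integral $\int_{h^{-1}(b)}^{a}(a-u)h'(u)\td u$, obtain the lower bound from Jensen's integral inequality~\eqref{Jensen-Ineq-Int-Eq} with barycentre $\frac{a+2h^{-1}(b)}{3}$, and obtain the upper bound from the two-point convexity (chord) estimate, which is exactly the discrete Jensen inequality~\eqref{Jensen-Ineq-Eq} that the paper applies after rescaling the interval to $[0,1]$. Your explicit treatment of the case $a<h^{-1}(b)$ and of the sign of the weight is in fact more careful than the paper's sketch.
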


\begin{proof}
This is the outline of the proof of~\cite[Theorem~2.6]{MIA-2010-13-43}.
\par
Changing the variable of the last term in~\eqref{2.2last-int-rew} results in
\begin{equation}\label{variable-change-eq}
\int^{a}_{h^{-1}(b)}(a-u)h'(u)\td u
=\int_{0}^{1}\bigl[a-h^{-1}(b)\bigr]^2(1-x)h'\bigl(xa+(1-x)h^{-1}(b)\bigr)\td x.
\end{equation}
If $f$ is a convex function on an interval $I\subseteq\mathbb{R}$ and if $n\ge2$ and $x_k\in I$ for $1\le k\le n$, then
\begin{equation}\label{Jensen-Ineq-Eq}
f\Biggl(\frac{1}{\sum_{k=1}^{n}p_k}\sum_{k=1}^{n}p_kx_k\Biggr)\le\frac{1}{\sum_{k=1}^{n}p_k}\sum_{k=1}^{n}p_kf(x_k),
\end{equation}
where $p_k>0$ for $1\le k\le n$. If $f$ is concave, the inequality~\eqref{Jensen-Ineq-Eq} is reversed.
The inequality~\eqref{Jensen-Ineq-Eq} is called Jensen's discrete inequality for convex functions in the literature~\cite[Section~1.4]{mit} and~\cite[Chapter~I]{mpf-1993}.
Applying~\eqref{Jensen-Ineq-Eq} to the third factor in the integrand of the right hand side in~\eqref{variable-change-eq} arrives at the right inequality in~\eqref{theorem2.6-mia}.
\par
Let $\phi$ be a convex function on $[\mu,\nu]$, $f\in L_1(\mu,\nu)$, and $\sigma$ be a non-negative measure. Then
\begin{equation}\label{Jensen-Ineq-Int-Eq}
\phi\Biggl(\frac{\int_{\mu}^{\nu}f(x)\td\sigma}{\int_{\mu}^{\nu}\td\sigma}\Biggr)
\le\frac{\int_{\mu}^{\nu}\phi(f(x))\td\sigma}{\int_{\mu}^{\nu}\td\sigma}.
\end{equation}
If $\phi$ is a concave function, then the inequality~\eqref{Jensen-Ineq-Int-Eq} is reversed. The inequality~\eqref{Jensen-Ineq-Int-Eq} is called Jensen's integral inequality for convex functions in the literature~\cite[p.~10, (7.15)]{mpf-1993}.
Applying~\eqref{Jensen-Ineq-Int-Eq} yields
\begin{align*}
\int_{h^{-1}(b)}^{a}(a-x)h'(x)\td x
&\ge\frac{[1-h^{-1}(b)]^2}{2}
h'\Biggl(\frac{\int_{h^{-1}(b)}^{a}(a-x)x\td x}{\int_{h^{-1}(b)}^{a}(a-x)\td x}\Biggr)\\
&=\frac{[1-h^{-1}(b)]^2}{2}h'\bigg(\frac{a+2h^{-1}(b)}{3}\biggr).
\end{align*}
The proof of Theorem~\ref{Jak-Pecaric-2010-Thm} is complete.
\end{proof}

\begin{rem}
The double inequality~\eqref{theorem2.6-mia} can be geometrically interpreted as
\begin{equation*}
\frac{\bigl[a-h^{-1}(b)\bigr]^2}{2}h'\biggl(\frac{a+2h^{-1}(b)}{3}\biggr)
\le C
\le\frac{\bigl[a-h^{-1}(b)\bigr]^2}3\biggl[\frac{h'(a)}{2}+h'\bigl(h^{-1}(b)\bigr)\biggr],
\end{equation*}
where $C$ denotes the area showed in Figures~\ref{Young-Geom-M-1.jpg} to~\ref{Young-Geom-C-2-1.jpg}.
\end{rem}

\subsection{Refinements of Young's integral inequality via H\"older's integral inequality}
In~\cite[Theorem~2.1]{MIA-2010-13-43}, H\"older's integral inequality was utilized to present the following inequalities, which refine Young's integral inequality~\eqref{Young-eq1} and Hoorfar--Qi's double inequality~\eqref{Hoofar-Qi-eq4}, in terms of norms.

\begin{thm}[{\cite[Theorem~2.1]{MIA-2010-13-43}}]\label{josip-J-thm2.1}
Let $h(x)$ be a differentiable and strictly increasing function on $[0,c]$ for $c>0$ and let $h^{-1}$ be the inverse function of $h$. If $h(0)=0$, $a\in[0,c]$, $b\in[0,h(c)]$, and $h'(x)$ is almost everywhere continuous with respect to Lebesgue measure on $[\alpha,\beta]$, then the double inequality
\begin{equation}\label{theorem2.1-mia}
C_u\|h'\|_v\le \int_0^ah(x)\td x+\int_0^bh^{-1}(x)\td x-ab \le C_p\|h'\|_q
\end{equation}
is valid for all $u,v$ and $p,q$ satisfying
\begin{enumerate}
\item
$\frac1u+\frac1v=1$ for $u,v\in(-\infty,0)\cup(0,1)$, or $(u,v)=(1,-\infty)$, or $(u,v)=(-\infty,1)$;
\item
$\frac1p+\frac1q=1$ for $1<p,q<\infty$, or $(p,q)=(+\infty,1)$, or $(p,q)=(1,+\infty)$;
\end{enumerate}
where
\begin{equation*}
C_r=
\begin{dcases}
\Biggl[\frac{\bigl|a-h^{-1}(b)\bigr|^{r+1}}{r+1}\Biggr]^{1/r}, & r\ne0,\pm\infty;\\
\bigl|a-h^{-1}(b)\bigr|, & r=+\infty;\\
0, & r=-\infty
\end{dcases}
\end{equation*}
and
\begin{equation*}
\|h'\|_r=
\begin{dcases}
\biggl[\int_\alpha^\beta [h'(t)]^r\td t\biggr]^{1/r}, & r\ne0,\pm\infty;\\
\sup\{h'(t),t\in[\alpha,\beta]\}, & r=+\infty;\\
\inf\{h'(t),t\in[\alpha,\beta]\}, & r=-\infty.
\end{dcases}
\end{equation*}
\end{thm}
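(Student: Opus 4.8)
The plan is to collapse the whole left-hand quantity into a single integral and then estimate that integral from above by Hölder's inequality and from below by its reverse. First I would recall from the chain of equalities~\eqref{hoorfar-qi-eq5}, combined with the rewriting~\eqref{2.2last-int-rew}, that
\begin{equation*}
\int_0^ah(x)\td x+\int_0^bh^{-1}(x)\td x-ab
=\int^{a}_{h^{-1}(b)}(a-t)h'(t)\td t
=\int_{\alpha}^{\beta}|a-t|\,h'(t)\td t,
\end{equation*}
where the last equality absorbs the sign of $a-t$ into $|a-t|$ and thereby unifies the two configurations $a>h^{-1}(b)$ and $a<h^{-1}(b)$; in both cases the integrand is nonnegative because $h$ is strictly increasing and hence $h'>0$. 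This single nonnegative integral is the object I would bound in two directions.

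For the right-hand inequality in~\eqref{theorem2.1-mia} I would apply H\"older's integral inequality to the product $|a-t|\cdot h'(t)$ with conjugate exponents $\frac1p+\frac1q=1$, $1<p,q<\infty$, obtaining
\begin{equation*}
\int_{\alpha}^{\beta}|a-t|\,h'(t)\td t
\le\biggl[\int_{\alpha}^{\beta}|a-t|^p\td t\biggr]^{1/p}
\biggl[\int_{\alpha}^{\beta}[h'(t)]^q\td t\biggr]^{1/q}.
\end{equation*}
The second factor is precisely $\|h'\|_q$, and a substitution $w=|a-t|$ evaluates the first factor as $\bigl[|a-h^{-1}(b)|^{p+1}/(p+1)\bigr]^{1/p}=C_p$, giving the bound $C_p\|h'\|_q$. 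The two endpoint pairs $(p,q)=(+\infty,1)$ and $(1,+\infty)$ I would treat separately through the $L^\infty$--$L^1$ pairing: pulling $\sup_{[\alpha,\beta]}|a-t|=|a-h^{-1}(b)|$ out of the integral reproduces $C_\infty\|h'\|_1$, and pulling $\sup_{[\alpha,\beta]}h'$ out reproduces $C_1\|h'\|_\infty$, matching the stated values of $C_r$.

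For the left-hand inequality I would invoke the reverse H\"older inequality, valid exactly when one exponent lies in $(0,1)$ and its conjugate is negative, that is, for $u,v$ with $\frac1u+\frac1v=1$, $u\in(0,1)$, $v\in(-\infty,0)$ (and symmetrically). This yields
\begin{equation*}
\int_{\alpha}^{\beta}|a-t|\,h'(t)\td t
\ge\biggl[\int_{\alpha}^{\beta}|a-t|^{u}\td t\biggr]^{1/u}
\biggl[\int_{\alpha}^{\beta}[h'(t)]^{v}\td t\biggr]^{1/v}
=C_u\|h'\|_v,
\end{equation*}
with $C_u$ arising from the same power computation as before. The limiting pair $(u,v)=(1,-\infty)$ I would handle directly, bounding $h'(t)\ge\inf_{[\alpha,\beta]}h'=\|h'\|_{-\infty}$ and factoring it out of $\int_\alpha^\beta|a-t|\td t=C_1$; the pair $(u,v)=(-\infty,1)$ is trivial, since $C_{-\infty}=0$ reduces the lower bound to the nonnegativity already supplied by Young's inequality~\eqref{Young-eq1}.

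The main obstacle I anticipate is not any single estimate but the bookkeeping around the exponents: stating the reverse H\"older inequality over its correct admissible range, verifying that each of the four boundary pairs involving $\pm\infty$ is covered by an appropriate limiting argument rather than by the generic inequality, and confirming that the absolute-value unification makes the single formula $C_r=\bigl[|a-h^{-1}(b)|^{r+1}/(r+1)\bigr]^{1/r}$ valid in both geometric configurations. An equality analysis, were it requested, would follow from the equality condition in H\"older's inequality, namely proportionality of $|a-t|^p$ and $[h'(t)]^q$; but the statement as given does not ask for it.
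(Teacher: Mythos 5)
Your proposal is correct and follows essentially the same route as the paper: reduce the left-hand quantity via~\eqref{hoorfar-qi-eq5} and~\eqref{2.2last-int-rew} to the single nonnegative integral $\int_\alpha^\beta|a-t|h'(t)\td t$, then apply H\"older's inequality for the upper bound, its reverse (one exponent in $(0,1)$, the conjugate negative) for the lower bound, and direct $\sup$/$\inf$ estimates for the boundary exponent pairs. The paper only writes out the upper-bound cases and dismisses the rest as straightforward, so your explicit treatment of the reverse inequality and of $(u,v)=(1,-\infty)$ and $(-\infty,1)$ is, if anything, slightly more complete.
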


\begin{proof}
This is the outline of the proof of~\cite[Theorem~2.1]{MIA-2010-13-43}.
\par
Let $\frac1p+\frac1q=1$ with $p>0$ and $p\ne1$, let $f$ and $g$ be real functions on $[\mu,\nu]$, and let $|f|^p$ and $|g|^q$ be integrable on $[\mu,\nu]$.
\begin{enumerate}
\item
If $p>1$, then
\begin{equation}\label{Holder-Ineq-Eq}
\int_{\mu}^{\nu}|f(x)g(x)|\td x\le \biggl[\int_{\mu}^{\nu}|f(x)|^p\td x\biggr]^{1/p} \biggl[\int_{\mu}^{\nu}|g(x)|^q\td x\biggr]^{1/q}.
\end{equation}
The equality in~\eqref{Holder-Ineq-Eq} holds if and only if $A|f(x)|^p=B|g(x)|^q$ almost everywhere for two constants $A$ and $B$.
\item
If $0<p<1$, then the inequality~\eqref{Holder-Ineq-Eq} is reversed.
\end{enumerate}
The inequality~\eqref{Holder-Ineq-Eq} is called H\"older's integral inequality in the lierature~\cite[Chapter~V]{mpf-1993} and~\cite{Sandor-Szabo-JMAA-1996, Tian-Holder-Jia-2011, Tian-Ha-JMI-2017}.
\par
From~\eqref{2.2last-int-rew}, it follows that,
\begin{enumerate}
\item
by a property of definite integrals, we have
\begin{multline*}
\int^{a}_{h^{-1}(b)}(a-u)h'(u)\td u
=\int^{\alpha}_{\beta}|a-u|h'(u)\td u\\
\le\bigl|h^{-1}(b)-a\bigr|\int^{\alpha}_{\beta}h'(u)\td u
=C_\infty\|h'\|_1;
\end{multline*}
\item
by a property of definite integrals, we have
\begin{equation*}
\int^{a}_{h^{-1}(b)}(a-u)h'(u)\td u
=\int^{\alpha}_{\beta}|a-u|h'(u)\td u
\le C_1\|h'\|_\infty;
\end{equation*}
\item
by H\"older's integral inequality~\eqref{Holder-Ineq-Eq}, we have
\begin{multline*}
\int^{a}_{h^{-1}(b)}(a-u)h'(u)\td u
=\int^{\alpha}_{\beta}|a-u|h'(u)\td u\\
\le\biggl(\int^{\alpha}_{\beta}|a-u|^q\td u\biggr)^{1/q} \biggl(\int^{\alpha}_{\beta}[h'(u)]^p\td u\biggr)^{1/p}
=C_q\|h'\|_p.
\end{multline*}
\end{enumerate}
The rest proofs are straightforward. The proofs of the double inequality~\eqref{theorem2.1-mia} and Theorem~\ref{josip-J-thm2.1} are thus complete.
\end{proof}

\begin{rem}
The double inequality~\eqref{theorem2.1-mia} can be geometrically interpreted as
\begin{equation*}
C_u\|h'\|_v\le C \le C_p\|h'\|_q,
\end{equation*}
where $C$ denotes the area showed in Figures~\ref{Young-Geom-M-1.jpg} to~\ref{Young-Geom-C-2-1.jpg}.
\end{rem}

\subsection{Refinements of Young's integral inequality via Taylor's mean value theorem of Lagrange's type remainder}
In~\cite[Theorem~3.1]{Young-Ineq.tex}, making use of Taylor's mean value theorem of Lagrange's type remainder, Wang, Guo, and Qi refined the above inequalities of Young's type via higher order derivatives.

\begin{thm}[{\cite[Theorem~3.1]{Young-Ineq.tex}}]\label{Qi-Grad-Thm1}
Let $h(0)=0$ and $h(x)$ be strictly increasing on $[0,c]$ for $c>0$, let $h^{(n)}(x)$ for $n\ge0$ be continuous on $[0,c]$, let $h^{(n+1)}(x)$ be finite and strictly monotonic on $(0,c)$, and let $h^{-1}$ be the inverse function of $h$. For $a\in[0,c]$ and $b\in[0,h(c)]$,
\begin{enumerate}
\item
if $b<h(a)$, then
\begin{equation}\label{Qi-Grad-Ineq1}
\begin{aligned}
&\quad\sum_{k=1}^nh^{(k)}\bigl(h^{-1}(b)\bigr)\frac{\bigl[a-h^{-1}(b)\bigr]^{k+1}}{(k+1)!} +m_n(a,b)\frac{\bigl[a-h^{-1}(b)\bigr]^{n+2}}{(n+2)!}\\
&\le\int_0^ah(x)\td x+\int_0^bh^{-1}(x)\td x-ab\\
&\le \sum_{k=1}^nh^{(k)}\bigl(h^{-1}(b)\bigr)\frac{\bigl[a-h^{-1}(b)\bigr]^{k+1}}{(k+1)!} +M_n(a,b)\frac{\bigl[a-h^{-1}(b)\bigr]^{n+2}}{(n+2)!},
\end{aligned}
\end{equation}
where
\begin{equation*}
m_n(a,b)=\min\bigl\{h^{(n+1)}\bigl(h^{-1}(b)\bigr),h^{(n+1)}(a)\bigr\}
\end{equation*}
and
\begin{equation*}
M_n(a,b)=\max\bigl\{h^{(n+1)}\bigl(h^{-1}(b)\bigr),h^{(n+1)}(a)\bigr\};
\end{equation*}
\item
if $b>h(a)$, then
\begin{enumerate}
\item
when $n=2\ell$ for $\ell\ge0$, the double inequality~\eqref{Qi-Grad-Ineq1} is valid;
\item
when $n=2\ell+1$ for $\ell\ge0$, we have
\begin{equation}\label{Qi-Grad-Ineq2}
\begin{aligned}
&\quad\sum_{k=1}^nh^{(k)}\bigl(h^{-1}(b)\bigr)\frac{\bigl[a-h^{-1}(b)\bigr]^{k+1}}{(k+1)!} -M_n(a,b)\frac{\bigl[a-h^{-1}(b)\bigr]^{n+2}}{(n+2)!}\\
&\le\int_0^ah(x)\td x+\int_0^bh^{-1}(x)\td x-ab\\
&\le\sum_{k=1}^nh^{(k)}\bigl(h^{-1}(b)\bigr)\frac{\bigl[a-h^{-1}(b)\bigr]^{k+1}}{(k+1)!} -m_n(a,b)\frac{\bigl[a-h^{-1}(b)\bigr]^{n+2}}{(n+2)!};
\end{aligned}
\end{equation}
\end{enumerate}
\item
if, and only if, $b=h(a)$, those equalities in~\eqref{Qi-Grad-Ineq1} and~\eqref{Qi-Grad-Ineq2} hold.
\end{enumerate}
\end{thm}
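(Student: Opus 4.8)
The plan is to reduce the whole statement to the single-integral identity already established inside the proof of Theorem~\ref{hoorfar-qi-young} and then to feed Taylor's expansion into it. Concretely, the last line of~\eqref{hoorfar-qi-eq5} gives
\begin{equation*}
\int_0^a h(x)\td x+\int_0^b h^{-1}(x)\td x-ab=\int_{h^{-1}(b)}^{a}[h(x)-b]\td x,
\end{equation*}
and, since $b=h\bigl(h^{-1}(b)\bigr)$, the integrand equals $h(x)-h\bigl(h^{-1}(b)\bigr)$. First I would expand $h(x)$ about the point $h^{-1}(b)$ by Taylor's mean value theorem with Lagrange's type remainder to order $n$, which is legitimate because $h^{(n)}$ is continuous on $[0,c]$ and $h^{(n+1)}$ is finite on $(0,c)$: there is $\xi=\xi(x)$ strictly between $h^{-1}(b)$ and $x$ with
\begin{equation*}
h(x)-b=\sum_{k=1}^{n}h^{(k)}\bigl(h^{-1}(b)\bigr)\frac{\bigl[x-h^{-1}(b)\bigr]^{k}}{k!}+\frac{h^{(n+1)}(\xi)}{(n+1)!}\bigl[x-h^{-1}(b)\bigr]^{n+1}.
\end{equation*}

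Next I would integrate this identity over the interval with endpoints $h^{-1}(b)$ and $a$. The polynomial part integrates explicitly, each monomial $\bigl[x-h^{-1}(b)\bigr]^{k}$ contributing $\frac{[a-h^{-1}(b)]^{k+1}}{k+1}$, so the sum produces exactly the leading expression $\sum_{k=1}^{n}h^{(k)}\bigl(h^{-1}(b)\bigr)\frac{[a-h^{-1}(b)]^{k+1}}{(k+1)!}$ common to~\eqref{Qi-Grad-Ineq1} and~\eqref{Qi-Grad-Ineq2}. Everything then hinges on estimating the remainder integral $R=\frac1{(n+1)!}\int_{h^{-1}(b)}^{a}h^{(n+1)}(\xi)\bigl[x-h^{-1}(b)\bigr]^{n+1}\td x$. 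Because $h^{(n+1)}$ is strictly monotonic on $(0,c)$ and $\xi$ always lies between $h^{-1}(b)$ and $x$, hence in the interval with endpoints $h^{-1}(b)$ and $a$, the value $h^{(n+1)}(\xi)$ is squeezed strictly between $m_n(a,b)$ and $M_n(a,b)$; combining this two-sided bound with the weight $\bigl[x-h^{-1}(b)\bigr]^{n+1}$ and using $\frac1{(n+1)!}\int_{h^{-1}(b)}^{a}[x-h^{-1}(b)]^{n+1}\td x=\frac{[a-h^{-1}(b)]^{n+2}}{(n+2)!}$ converts the squeeze on $h^{(n+1)}(\xi)$ into the desired bound on $R$.

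The delicate part, and what I expect to be the main obstacle, is the sign bookkeeping that decides whether the resulting double inequality keeps the form~\eqref{Qi-Grad-Ineq1} or reverses to the form~\eqref{Qi-Grad-Ineq2}. Three signs interact: the orientation of the integration, governed by $\operatorname{sgn}\bigl[a-h^{-1}(b)\bigr]$, that is, by whether $b<h(a)$ (so $h^{-1}(b)<a$) or $b>h(a)$ (so $h^{-1}(b)>a$); the sign of the weight $\bigl[x-h^{-1}(b)\bigr]^{n+1}$ on the interval of integration, governed by the parity of $n+1$; and the direction in which multiplying by that weight and then integrating propagates the inequality $m_n(a,b)\le h^{(n+1)}(\xi)\le M_n(a,b)$. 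When $b<h(a)$ the weight is nonnegative and the integration is forward, so no reversal occurs and~\eqref{Qi-Grad-Ineq1} follows at once. When $b>h(a)$ the integration is backward, contributing one reversal; if $n$ is even the weight is also negative on the interval, contributing a second reversal that cancels the first and leaves~\eqref{Qi-Grad-Ineq1} intact, whereas if $n$ is odd the weight is nonnegative and only the single reversal survives, producing the reversed assignment of $m_n(a,b)$ and $M_n(a,b)$ exhibited in~\eqref{Qi-Grad-Ineq2}. Finally, the equality statement follows because the strict monotonicity of $h^{(n+1)}$ forces every estimate of $R$ to be strict unless the interval of integration degenerates to a point, that is, unless $h^{-1}(b)=a$, which is equivalent to $b=h(a)$; in that case all three displayed quantities collapse and the equalities hold.
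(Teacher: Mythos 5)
Your proposal follows exactly the route of the paper's (outlined) proof: the identity from the last line of \eqref{hoorfar-qi-eq5}, Taylor's mean value theorem with Lagrange's type remainder \eqref{Taylor-thm-Lagrange} expanded about $h^{-1}(b)$, termwise integration of the polynomial part, and a pointwise squeeze $m_n(a,b)\le h^{(n+1)}(\xi)\le M_n(a,b)$ on the remainder integral; the paper dismisses the ensuing case analysis as ``straightforward discussions,'' and you actually carry it out. Your sign bookkeeping for the cases $b<h(a)$, and $b>h(a)$ with $n$ even, is correct and does reproduce \eqref{Qi-Grad-Ineq1}.

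The one step that does not close is the last case. For $b>h(a)$ and $n=2\ell+1$, your two reversals (backward orientation, nonnegative weight) applied to the remainder $R$ you introduced give
\begin{equation*}
M_n(a,b)\frac{\bigl[a-h^{-1}(b)\bigr]^{n+2}}{(n+2)!}\le R\le m_n(a,b)\frac{\bigl[a-h^{-1}(b)\bigr]^{n+2}}{(n+2)!},
\end{equation*}
that is, the roles of $m_n$ and $M_n$ are swapped but the sign in front of the remainder term remains $+$. This is \emph{not} literally \eqref{Qi-Grad-Ineq2}, which carries an additional minus sign, so your closing claim that the bookkeeping ``produces \eqref{Qi-Grad-Ineq2}'' does not follow from what precedes it. The discrepancy in fact points at the printed statement rather than at your argument: since $\bigl[a-h^{-1}(b)\bigr]^{n+2}<0$ for odd $n$ and $a<h^{-1}(b)$, the stated lower bound in \eqref{Qi-Grad-Ineq2} exceeds its stated upper bound whenever $m_n<M_n$, and a direct check with $h(x)=x^2$, $a=1$, $b=4$, $n=1$ (or a small perturbation making $h''$ strictly monotonic) gives the middle quantity $5/3$ against printed bounds $7/3$, while the display above gives the correct value $R=-1/3$. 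So you should record the inequality displayed above as the conclusion of case (2b) and flag the sign misprint in \eqref{Qi-Grad-Ineq2}, rather than assert agreement with it.
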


\begin{proof}
This is the outline of the proof of~\cite[Theorem~3.1]{Young-Ineq.tex}.
\par
Let $f(x)$ be a function having finite $n$th derivative $f^{(n)}(x)$ everywhere in an open interval $(\mu,\nu)$ and assume that $f^{(n-1)}(x)$ is continuous on the closed interval $[\mu,\nu]$. Then, for a fixed point $x_0\in[\mu,\nu]$ and every $x\in[\mu,\nu]$ with $x\ne x_0$, there exists a point $x_1$ interior to the interval jointing $x$ and $x_0$ such that
\begin{equation}\label{Taylor-thm-Lagrange}
f(x)=f(x_0)+\sum_{k=1}^{n-1}\frac{f^{(k)}(x_0)}{k!}(x-x_0)^k+\frac{f^{(n)}(x_1)}{n!}(x-x_0)^n.
\end{equation}
The formula~\eqref{Taylor-thm-Lagrange} is called Taylor's mean value theorem of Lagrange's type remainder in the literature~\cite[p.~113, Theorem~5.19]{Apostol-MA-1974}. Applying~\eqref{Taylor-thm-Lagrange} in the last term of~\eqref{hoorfar-qi-eq5} reveals
\begin{align*}
\int^{a}_{h^{-1}(b)}[h(x)-b]\td x
&=\int^{a}_{h^{-1}(b)}\bigl[h(x)-h\bigl(h^{-1}(b)\bigr)\bigr]\td x\\
&=\sum_{k=1}^n\frac{h^{(k)}\bigl(h^{-1}(b)\bigr)}{k!}\int_{h^{-1}(b)}^a\bigl[x-h^{-1}(b)\bigr]^k\td x\\ &\quad+\frac1{(n+1)!}\int_{h^{-1}(b)}^ah^{(n+1)}(\xi)\bigl[x-h^{-1}(b)\bigr]^{n+1}\td x\\
&=\sum_{k=1}^nh^{(k)}\bigl(h^{-1}(b)\bigr)\frac{\bigl[a-h^{-1}(b)\bigr]^{k+1}}{(k+1)!}\\
&\quad+\int_{h^{-1}(b)}^ah^{(n+1)}(\xi)\frac{\bigl[x-h^{-1}(b)\bigr]^{n+1}}{(n+1)!}\td x,
\end{align*}
where $\xi$ is a point interior to the interval jointing $x$ and $h^{-1}(b)$. The rest proofs are straightforward discussions on various cases of the factor $h^{(n+1)}(\xi)$. The proof of Theorem~\ref{Qi-Grad-Thm1} is complete.
\end{proof}

\begin{rem}
The double inequalities~\eqref{Qi-Grad-Ineq1} and~\eqref{Qi-Grad-Ineq2} can be geometrically interpreted as
\begin{multline*}
\sum_{k=1}^nh^{(k)}\bigl(h^{-1}(b)\bigr)\frac{\bigl[a-h^{-1}(b)\bigr]^{k+1}}{(k+1)!} +m_n(a,b)\frac{\bigl[a-h^{-1}(b)\bigr]^{n+2}}{(n+2)!}
\le C\\
\le \sum_{k=1}^nh^{(k)}\bigl(h^{-1}(b)\bigr)\frac{\bigl[a-h^{-1}(b)\bigr]^{k+1}}{(k+1)!} +M_n(a,b)\frac{\bigl[a-h^{-1}(b)\bigr]^{n+2}}{(n+2)!}
\end{multline*}
and
\begin{multline*}
\sum_{k=1}^nh^{(k)}\bigl(h^{-1}(b)\bigr)\frac{\bigl[a-h^{-1}(b)\bigr]^{k+1}}{(k+1)!} -M_n(a,b)\frac{\bigl[a-h^{-1}(b)\bigr]^{n+2}}{(n+2)!}
\le C\\
\le\sum_{k=1}^nh^{(k)}\bigl(h^{-1}(b)\bigr)\frac{\bigl[a-h^{-1}(b)\bigr]^{k+1}}{(k+1)!} -m_n(a,b)\frac{\bigl[a-h^{-1}(b)\bigr]^{n+2}}{(n+2)!},
\end{multline*}
where $C$ denotes the area showed in Figures~\ref{Young-Geom-M-1.jpg} to~\ref{Young-Geom-C-2-1.jpg}.
\end{rem}

\subsection{Refinements of Young's integral inequality via Taylor's mean value theorem of Cauchy's type remainder and H\"older's integral inequality}
In~\cite[Theorem~3.2]{Young-Ineq.tex}, employing Taylor's mean value theorem of Cauchy's type remainder and H\"older's integral inequality, Wang, Guo, and Qi refined the above inequalities of Young's type via norms of higher order derivatives.

\begin{thm}[{\cite[Theorem~3.2]{Young-Ineq.tex}}]\label{Qi-Grad-Thm2}
Let $n\ge0$ and $h(x)\in C^{n+1}[0,c]$ such that $h(0)=0$, $h^{(n+1)}(x)\ge0$ on $[\alpha,\beta]$, and $h(x)$ is strictly increasing on $[0,c]$ for $c>0$, let $h^{-1}$ be the inverse function of $h$, and let $a\in[0,c]$ and $b\in[0,h(c)]$. Then
\begin{enumerate}
\item
when $b>h(a)$ and $n=2\ell$ for $\ell\ge0$ or when $b<h(a)$, we have
\begin{align*}
\frac{C_{u,n}}{(n+1)!}\bigl\|h^{(n+1)}\bigr\|_v
&\le\int_0^ah(x)\td x+\int_0^bh^{-1}(x)\td x-ab\\
&\quad-\sum_{k=1}^nh^{(k)}\bigl(h^{-1}(b)\bigr)\frac{\bigl[a-h^{-1}(b)\bigr]^{k+1}}{(k+1)!}\\
&\le\frac{C_{p,n}}{(n+1)!}\bigl\|h^{(n+1)}\bigr\|_q;
\end{align*}
\item
when $b>h(a)$ and $n=2\ell+1$ for $\ell\ge0$, we have
\begin{align*}
-\frac{C_{p,n}}{(n+1)!}\bigl\|h^{(n+1)}\bigr\|_q
&\le\int_0^ah(x)\td x+\int_0^bh^{-1}(x)\td x-ab\\
&\quad-\sum_{k=1}^nh^{(k)}\bigl(h^{-1}(b)\bigr)\frac{\bigl[a-h^{-1}(b)\bigr]^{k+1}}{(k+1)!}\\
&\le-\frac{C_{u,n}}{(n+1)!}\bigl\|h^{(n+1)}\bigr\|_v;
\end{align*}
\end{enumerate}
where $\alpha,\beta$ are defined as in~\eqref{alpha-beta-def-eq},
\begin{align*}
C_{r,n}&=
\begin{dcases}
\Biggl[\frac{\bigl|a-h^{-1}(b)\bigr|^{r(n+1)+1}}{r(n+1)+1}\Biggr]^{1/r}, & r\ne0,\pm\infty;\\
\bigl|a-h^{-1}(b)\bigr|^{n+1}, & r=+\infty;\\
0, & r=-\infty,
\end{dcases}\\
\bigl\|h^{(n+1)}\bigr\|_r&=
\begin{dcases}
\biggl[\int_\alpha^\beta\bigl[h^{(n+1)}(t)\bigr]^r\td t\biggr]^{1/r}, & r\ne0,\pm\infty;\\
\sup\bigl\{h^{(n+1)}(t),t\in[\alpha,\beta]\bigr\}, & r=+\infty;\\
\inf\bigl\{h^{(n+1)}(t),t\in[\alpha,\beta]\bigr\}, & r=-\infty,
\end{dcases}
\end{align*}
and $u,v,p,q$ satisfy
\begin{enumerate}
\item
$u<1$ and $u\ne0$ with $\frac1u+\frac1v=1$, or $(u,v)=(-\infty,1)$, or $(u,v)=(1,-\infty)$;
\item
$1<p,q<\infty$ with $\frac1p+\frac1q=1$, or $(p,q)=(+\infty,1)$, or $(p,q)=(1,+\infty)$.
\end{enumerate}
\end{thm}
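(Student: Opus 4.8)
The plan is to open exactly as in the proofs of Theorems~\ref{Qi-Grad-Thm1} and~\ref{josip-J-thm2.1}: start from the last line of~\eqref{hoorfar-qi-eq5}, rewritten through~\eqref{2.2last-int-rew}, so that the quantity to be estimated is
\[
\int_0^a h(x)\,\td x+\int_0^b h^{-1}(x)\,\td x-ab=\int_{h^{-1}(b)}^a\bigl[h(x)-h\bigl(h^{-1}(b)\bigr)\bigr]\,\td x.
\]
First I would expand $h(x)$ about the point $h^{-1}(b)$ by Taylor's theorem carrying the remainder in Cauchy's (integral) form,
\[
h(x)-h\bigl(h^{-1}(b)\bigr)=\sum_{k=1}^n\frac{h^{(k)}\bigl(h^{-1}(b)\bigr)}{k!}\bigl[x-h^{-1}(b)\bigr]^k+\frac1{n!}\int_{h^{-1}(b)}^x h^{(n+1)}(t)(x-t)^n\,\td t.
\]
Integrating this in $x$ over the interval joining $h^{-1}(b)$ and $a$ reproduces the polynomial part $\sum_{k=1}^n h^{(k)}\bigl(h^{-1}(b)\bigr)\frac{[a-h^{-1}(b)]^{k+1}}{(k+1)!}$ subtracted in the statement and leaves the remainder as a double integral. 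Switching the order of integration on the triangle $\{h^{-1}(b)\le t\le x\le a\}$ (or its reflection) collapses it to the single integral
\[
\frac1{(n+1)!}\int_{h^{-1}(b)}^a h^{(n+1)}(t)\,(a-t)^{n+1}\,\td t,
\]
which is precisely the object to be bounded in terms of norms.

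Next I would apply H\"older's integral inequality~\eqref{Holder-Ineq-Eq}, together with its reversal for exponents in $(0,1)$ and the limiting/negative exponents, to the factorization $(a-t)^{n+1}\cdot h^{(n+1)}(t)$. Assigning the power $p$ to $|a-t|^{n+1}$ and using $\frac1p+\frac1q=1$ gives
\[
\frac1{(n+1)!}\int_\alpha^\beta|a-t|^{n+1}h^{(n+1)}(t)\,\td t\le\frac1{(n+1)!}\biggl(\int_\alpha^\beta|a-t|^{(n+1)p}\,\td t\biggr)^{1/p}\bigl\|h^{(n+1)}\bigr\|_q=\frac{C_{p,n}}{(n+1)!}\bigl\|h^{(n+1)}\bigr\|_q,
\]
since $\int_\alpha^\beta|a-t|^{(n+1)p}\,\td t=\frac{|a-h^{-1}(b)|^{(n+1)p+1}}{(n+1)p+1}$ by the definition~\eqref{alpha-beta-def-eq} of $\alpha,\beta$; the reverse H\"older inequality yields the matching lower bound $\frac{C_{u,n}}{(n+1)!}\bigl\|h^{(n+1)}\bigr\|_v$, and the endpoint pairs $(p,q)=(+\infty,1),(1,+\infty)$ and $(u,v)=(\pm\infty,1),(1,-\infty)$ follow from the $\sup$/$\inf$ conventions handled term by term, exactly as in the proof of Theorem~\ref{josip-J-thm2.1}. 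The hypothesis $h^{(n+1)}\ge0$ on $[\alpha,\beta]$ is what makes the norms and the reversed inequality legitimate, and it also guarantees the degenerate bound $C_{-\infty,n}=0$ is valid.

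Finally I would carry out the sign bookkeeping that produces the two cases. When $b<h(a)$ one has $h^{-1}(b)<a$, so $\alpha=h^{-1}(b)$, $\beta=a$, the integral runs in the positive direction, and $(a-t)^{n+1}\ge0$ on $[\alpha,\beta]$; the remainder is nonnegative and the bounds enter with the signs of case~(1). When $b>h(a)$ one has $h^{-1}(b)>a$, so the integral acquires an orientation sign $-\int_\alpha^\beta$, while on $[\alpha,\beta]$ the kernel $(a-t)^{n+1}$ carries the sign $(-1)^{n+1}$: for $n=2\ell$ the two sign changes cancel and case~(1) persists, whereas for $n=2\ell+1$ they reinforce, reversing the inequality and interchanging the roles of $C_{p,n}\bigl\|h^{(n+1)}\bigr\|_q$ and $C_{u,n}\bigl\|h^{(n+1)}\bigr\|_v$ with an overall minus sign, which is case~(2). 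I expect this sign-and-parity analysis, rather than the H\"older estimate itself, to be the main obstacle: one must simultaneously track the orientation of the integral, the parity of $n+1$, and the direction of the possibly reversed H\"older inequality, and verify that every endpoint exponent is covered. The equality discussion reduces, via the equality condition in~\eqref{Holder-Ineq-Eq}, to the degenerate situation $a=h^{-1}(b)$, that is, $b=h(a)$.
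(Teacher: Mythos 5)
Your proposal is correct and follows essentially the same route as the paper's own (outlined) proof: Taylor's expansion of $h$ about $h^{-1}(b)$ with the Cauchy integral remainder, interchange of the order of integration to reduce the remainder to $\frac{1}{(n+1)!}\int_{h^{-1}(b)}^a(a-t)^{n+1}h^{(n+1)}(t)\,\td t$, H\"older's inequality and its reverse applied to the factorization $|a-t|^{n+1}\cdot h^{(n+1)}(t)$, and the orientation/parity bookkeeping giving the factor $(-1)^n$ when $b>h(a)$. You in fact supply more detail than the paper, which defers the H\"older step to the proof of Theorem~\ref{josip-J-thm2.1} and the case analysis to a one-line remark.
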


\begin{proof}
This is the outline of the proof of~\cite[Theorem~3.2]{Young-Ineq.tex}.
\par
If $f(x)\in C^{n+1}[\mu,\nu]$ and $x_0\in[\mu,\nu]$, then
\begin{equation}\label{Taylor-thm-Cauchy}
f(x)=\sum_{k=0}^n\frac{f^{(k)}(x_0)}{k!}(x-x_0)^k+\frac{1}{n!}\int_{x_0}^{x}(x-t)^nf^{(n+1)}(t)\td t.
\end{equation}
The formula~\eqref{Taylor-thm-Cauchy} is called Taylor's mean value theorem of Cauchy's type remainder in the literature~\cite[p.~279, Theorem~7.6]{Apostol-Cal-1967} and~\cite[p.~6, 1.4.37]{NIST-HB-2010}. Applying the formula~\eqref{Taylor-thm-Cauchy} to the integrand in the last term of~\eqref{hoorfar-qi-eq5} yields
\begin{align*}
\int^{a}_{h^{-1}(b)}[h(x)-b]\td x
&=\int^{a}_{h^{-1}(b)}\bigl[h(x)-h\bigl(h^{-1}(b)\bigr)\bigr]\td x\\
&=\sum_{k=1}^nh^{(k)}\bigl(h^{-1}(b)\bigr)\frac{\bigl[a-h^{-1}(b)\bigr]^{k+1}}{(k+1)!}\\
&\quad+\int_{h^{-1}(b)}^a\frac{1}{n!}\int_{h^{-1}(b)}^{x}(x-t)^nh^{(n+1)}(t)\td t\td x\\
&=\sum_{k=1}^nh^{(k)}\bigl(h^{-1}(b)\bigr)\frac{\bigl[a-h^{-1}(b)\bigr]^{k+1}}{(k+1)!}\\
&\quad+\int_{h^{-1}(b)}^a\frac{1}{n!}\int_t^{a}(x-t)^nh^{(n+1)}(t)\td x\td t\\
&=\sum_{k=1}^nh^{(k)}\bigl(h^{-1}(b)\bigr)\frac{\bigl[a-h^{-1}(b)\bigr]^{k+1}}{(k+1)!}\\
&\quad+\frac{1}{(n+1)!}\int_{h^{-1}(b)}^a(a-t)^{n+1}h^{(n+1)}(t)\td t\\
&=\sum_{k=1}^nh^{(k)}\bigl(h^{-1}(b)\bigr)\frac{\bigl[a-h^{-1}(b)\bigr]^{k+1}}{(k+1)!}\\
&\quad+
\begin{dcases}
\frac{1}{{(n+1)!}}\int_\alpha^\beta|a-t|^{n+1}h^{(n+1)}(t)\td t, & b<h(a);\\
\frac{(-1)^n}{{(n+1)!}}\int_\alpha^\beta|a-t|^{n+1}h^{(n+1)}(t)\td t, & b>h(a).
\end{dcases}
\end{align*}
Discussing and making use of H\"older's integral inequality~\eqref{Holder-Ineq-Eq} as in the proof of Theorem~\ref{josip-J-thm2.1}, we can complete the proof of Theorem~\ref{Qi-Grad-Thm2}.
\end{proof}

\begin{rem}
Two double inequalities in Theorem~\ref{Qi-Grad-Thm2} can be geometrically interpreted as
\begin{align*}
\frac{C_{u,n}}{(n+1)!}\bigl\|h^{(n+1)}\bigr\|_v
&\le C-\sum_{k=1}^nh^{(k)}\bigl(h^{-1}(b)\bigr)\frac{\bigl[a-h^{-1}(b)\bigr]^{k+1}}{(k+1)!}\\
&\le\frac{C_{p,n}}{(n+1)!}\bigl\|h^{(n+1)}\bigr\|_q
\end{align*}
and
\begin{align*}
-\frac{C_{p,n}}{(n+1)!}\bigl\|h^{(n+1)}\bigr\|_q
&\le C-\sum_{k=1}^nh^{(k)}\bigl(h^{-1}(b)\bigr)\frac{\bigl[a-h^{-1}(b)\bigr]^{k+1}}{(k+1)!}\\
&\le-\frac{C_{u,n}}{(n+1)!}\bigl\|h^{(n+1)}\bigr\|_v,
\end{align*}
where $C$ denotes the area showed in Figures~\ref{Young-Geom-M-1.jpg} to~\ref{Young-Geom-C-2-1.jpg}.
\end{rem}

\subsection{Refinements of Young's integral inequality via Taylor's mean value theorem of Cauchy's type remainder and \v{C}eby\v{s}ev's integral inequality}

\begin{thm}[{\cite[Theorem~3.3]{Young-Ineq.tex}}]\label{Qi-Grad-Thm3}
Let $n\ge0$ and $h(x)\in C^{n+1}[0,c]$ such that $h(0)=0$ and $h(x)$ is strictly increasing on $[0,c]$ for $c>0$, let $h^{-1}$ be the inverse function of $h$, let $a\in[0,c]$ and $b\in[0,h(c)]$, and let $\ell\ge0$ be an integer. Then
\begin{enumerate}
\item
when
\begin{enumerate}
\item
either $h(a)>b$ and $h^{(n+1)}(x)$ is increasing on $[\alpha,\beta]$;
\item
or $h(a)<b$, $h^{(n+1)}(x)$ is increasing on $[\alpha,\beta]$, and $n=2\ell+1$;
\item
or $h(a)<b$, $h^{(n+1)}(x)$ is decreasing on $[\alpha,\beta]$, and $n=2\ell$;
\end{enumerate}
the inequality
\begin{equation}
\begin{aligned}\label{basic-eq-Young-Cebysev}
&\quad\int_0^ah(x)\td x+\int_0^bh^{-1}(x)\td x-ab\\
&\quad-\sum_{k=1}^nh^{(k)}\bigl(h^{-1}(b)\bigr)\frac{\bigl[a-h^{-1}(b)\bigr]^{k+1}}{(k+1)!}\\
&\le\frac{\bigl[a-h^{-1}(b)\bigr]^{n+1}}{(n+2)!}\bigl[h^{(n)}(a)-h^{(n)}\bigl(h^{-1}(b)\bigr)\bigr]
\end{aligned}
\end{equation}
is valid;
\item
when
\begin{enumerate}
\item
either $h(a)>b$ and $h^{(n+1)}(x)$ is decreasing on $[\alpha,\beta]$;
\item
or $h(a)<b$, $h^{(n+1)}(x)$ is increasing on $[\alpha,\beta]$, and $n=2\ell$;
\item
or $h(a)<b$, $h^{(n+1)}(x)$ is decreasing on $[\alpha,\beta]$, and $n=2\ell+1$;
\end{enumerate}
the inequality~\eqref{basic-eq-Young-Cebysev} is reversed;
\end{enumerate}
where $\alpha,\beta$ are defined as in~\eqref{alpha-beta-def-eq}.
\end{thm}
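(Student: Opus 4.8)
The plan is to reduce the statement to a single application of Čebyšev's integral inequality~\eqref{Cebysev-Ineq}, exactly paralleling how Theorem~\ref{Qi-Grad-Thm2} was reduced to Hölder's inequality. First I would recall from the proof of Theorem~\ref{Qi-Grad-Thm2} that applying Taylor's mean value theorem of Cauchy's type remainder~\eqref{Taylor-thm-Cauchy} to the last term of~\eqref{hoorfar-qi-eq5} yields
\[
\int_0^ah(x)\td x+\int_0^bh^{-1}(x)\td x-ab
-\sum_{k=1}^nh^{(k)}\bigl(h^{-1}(b)\bigr)\frac{\bigl[a-h^{-1}(b)\bigr]^{k+1}}{(k+1)!}
=\frac{1}{(n+1)!}\int_{h^{-1}(b)}^a(a-t)^{n+1}h^{(n+1)}(t)\td t.
\]
Thus the expression to be estimated in~\eqref{basic-eq-Young-Cebysev} is precisely this remainder integral, which I denote by $R$. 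As in the proof of Theorem~\ref{Qi-Grad-Thm2}, I would rewrite $R$ over the ordered interval $[\alpha,\beta]$ from~\eqref{alpha-beta-def-eq}; flipping the limits when $a<h^{-1}(b)$ introduces the factor $(-1)^n$, so that $R$ equals $\frac{1}{(n+1)!}\int_\alpha^\beta|a-t|^{n+1}h^{(n+1)}(t)\td t$ when $b<h(a)$ and $\frac{(-1)^n}{(n+1)!}\int_\alpha^\beta|a-t|^{n+1}h^{(n+1)}(t)\td t$ when $b>h(a)$.

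The heart of the argument is to apply Čebyšev's inequality~\eqref{Cebysev-Ineq} to the factors $|a-t|^{n+1}$ and $h^{(n+1)}(t)$ on $[\alpha,\beta]$. The key observation is that $t\mapsto|a-t|^{n+1}$ is decreasing on $[\alpha,\beta]$ when $b<h(a)$ (so that $\beta=a$) and increasing when $b>h(a)$ (so that $\alpha=a$). Consequently this factor is co-monotone or anti-monotone with $h^{(n+1)}$ exactly according to the case distinctions in the statement, and this fixes the direction of~\eqref{Cebysev-Ineq}. Together with the elementary evaluations $\int_\alpha^\beta|a-t|^{n+1}\td t=\frac{|a-h^{-1}(b)|^{n+2}}{n+2}$, $\int_\alpha^\beta h^{(n+1)}(t)\td t=h^{(n)}(\beta)-h^{(n)}(\alpha)$, and $\beta-\alpha=|a-h^{-1}(b)|$, Čebyšev's inequality delivers a bound of the shape
\[
\int_\alpha^\beta|a-t|^{n+1}h^{(n+1)}(t)\td t
\lessgtr\frac{\bigl|a-h^{-1}(b)\bigr|^{n+1}}{n+2}\bigl[h^{(n)}(\beta)-h^{(n)}(\alpha)\bigr].
\]

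To conclude I would divide by $(n+1)!$, reinsert the sign factor, and reconcile the various signs to recover~\eqref{basic-eq-Young-Cebysev}. This last step is where I expect the only real difficulty, namely the sign bookkeeping: the parity of $n$ enters both through the factor $(-1)^n$ coming from the flipped limits and through $|a-h^{-1}(b)|^{n+1}=(-1)^{n+1}\bigl[a-h^{-1}(b)\bigr]^{n+1}$ in the case $a<h^{-1}(b)$, while the ordering of $a$ and $h^{-1}(b)$ decides the sign in $h^{(n)}(\beta)-h^{(n)}(\alpha)=\pm\bigl[h^{(n)}(a)-h^{(n)}\bigl(h^{-1}(b)\bigr)\bigr]$. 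The task is to verify that, in each of the three sub-cases of part~(1), these flips combine to yield the single inequality~\eqref{basic-eq-Young-Cebysev}, and that the three sub-cases of part~(2) are exactly those in which the net sign is reversed. Once this short table of cases is confirmed, each entry collapses to the computation above, finishing the proof.
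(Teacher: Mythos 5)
Your proposal is correct and follows exactly the paper's route: reduce to the Cauchy-remainder integral $\frac{1}{(n+1)!}\int_{h^{-1}(b)}^a(a-t)^{n+1}h^{(n+1)}(t)\,\td t$ via Taylor's theorem as in the proof of Theorem~\ref{Qi-Grad-Thm2}, then apply \v{C}eby\v{s}ev's inequality~\eqref{Cebysev-Ineq} to the factors $|a-t|^{n+1}$ and $h^{(n+1)}(t)$ on $[\alpha,\beta]$. In fact you supply more detail than the paper's outline does, and your sign bookkeeping (the factor $(-1)^n$ from reversing the limits combined with the parity of $n+1$ and the orientation of $h^{(n)}(\beta)-h^{(n)}(\alpha)$) checks out in each sub-case.
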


\begin{proof}
This is the outline of the proof of~\cite[Theorem~3.3]{Young-Ineq.tex}.
\par
This follows from applying the formula~\eqref{Taylor-thm-Cauchy} as in the proof of Theorem~\ref{Qi-Grad-Thm2} and applying \v{C}eby\v{s}ev's integral inequality~\eqref{Cebysev-Ineq} to the integral
\begin{equation}\label{last-term-int}
\int_{h^{-1}(b)}^a(a-t)^{n+1}h^{(n+1)}(t)\td t
\end{equation}
in the proof of Theorem~\ref{Qi-Grad-Thm2}. The proof of Theorem~\ref{Qi-Grad-Thm3} is complete.
\end{proof}

\begin{rem}
The inequality~\eqref{basic-eq-Young-Cebysev} can be geometrically interpreted as
\begin{multline*}
C-\sum_{k=1}^nh^{(k)}\bigl(h^{-1}(b)\bigr)\frac{\bigl[a-h^{-1}(b)\bigr]^{k+1}}{(k+1)!}\\
\le\frac{\bigl[a-h^{-1}(b)\bigr]^{n+1}}{(n+2)!}\bigl[h^{(n)}(a)-h^{(n)}\bigl(h^{-1}(b)\bigr)\bigr],
\end{multline*}
where $C$ denotes the area showed in Figures~\ref{Young-Geom-M-1.jpg} to~\ref{Young-Geom-C-2-1.jpg}.
\end{rem}

\subsection{Refinements of Young's integral inequality via Taylor's mean value theorem of Cauchy's type remainder and Jensen's inequalities}

\begin{thm}[{\cite[Theorem~3.4]{Young-Ineq.tex}}]\label{Qi-Grad-Thm4}
Let $h(x)\in C^{n+1}[0,c]$ such that $h(0)=0$ and $h(x)$ is strictly increasing on $[0,c]$ for $c>0$, let $h^{-1}$ be the inverse function of $h$, and let $a\in[0,c]$ and $b\in[0,h(c)]$. If $h^{(n+1)}(x)$ is convex on $[\alpha,\beta]$, where $\alpha,\beta$ are defined as in~\eqref{alpha-beta-def-eq}, then
\begin{enumerate}
\item
when $h(a)>b$ or when $h(a)<b$ and $n=2\ell$, we have
\begin{equation}
\begin{aligned}\label{basic-eq-Young-Jensen1}
&\quad\frac{\bigl[a-h^{-1}(b)\bigr]^{n+2}}{n+2}h^{(n+1)}\biggl(\frac{a+(n+2)h^{-1}(b)}{n+3}\biggr)\\
&\le\int_0^ah(x)\td x+\int_0^bh^{-1}(x)\td x-ab\\
&\quad-\sum_{k=1}^nh^{(k)}\bigl(h^{-1}(b)\bigr)\frac{\bigl[a-h^{-1}(b)\bigr]^{k+1}}{(k+1)!}\\
&\le\bigl[a-h^{-1}(b)\bigr]^{n+2}\frac{h^{(n+1)}(a)+(n+2)h^{(n+1)}\bigl(h^{-1}(b)\bigr)}{(n+3)!};
\end{aligned}
\end{equation}
\item
when $h(a)<b$ and $n=2\ell+1$, the double inequality~\eqref{basic-eq-Young-Jensen1} is reversed;
\end{enumerate}
where $\ell\ge0$ is an integer.
If $h^{(n+1)}(x)$ is concave on $[\alpha,\beta]$, all the above inequalities are reversed for all corresponding cases.
\end{thm}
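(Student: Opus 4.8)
The plan is to reduce everything to the single Cauchy-remainder integral that already appears in the proof of Theorem~\ref{Qi-Grad-Thm2}, and then apply the two Jensen inequalities~\eqref{Jensen-Ineq-Eq} and~\eqref{Jensen-Ineq-Int-Eq} to it. Applying Taylor's formula~\eqref{Taylor-thm-Cauchy} to the integrand in the last line of~\eqref{hoorfar-qi-eq5}, exactly as there, shows that
\[
\int_0^ah(x)\td x+\int_0^bh^{-1}(x)\td x-ab-\sum_{k=1}^nh^{(k)}\bigl(h^{-1}(b)\bigr)\frac{\bigl[a-h^{-1}(b)\bigr]^{k+1}}{(k+1)!}
=\frac{1}{(n+1)!}\int_{h^{-1}(b)}^a(a-t)^{n+1}h^{(n+1)}(t)\td t,
\]
that is, $\frac1{(n+1)!}$ times the integral~\eqref{last-term-int}. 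So it suffices to bound this integral from both sides.

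First I would normalize it by the substitution $t=xa+(1-x)h^{-1}(b)$, $x\in[0,1]$, under which $a-t=(1-x)\bigl[a-h^{-1}(b)\bigr]$ and $\td t=\bigl[a-h^{-1}(b)\bigr]\td x$, giving
\[
\frac{1}{(n+1)!}\int_{h^{-1}(b)}^a(a-t)^{n+1}h^{(n+1)}(t)\td t
=\frac{\bigl[a-h^{-1}(b)\bigr]^{n+2}}{(n+1)!}\int_0^1(1-x)^{n+1}h^{(n+1)}\bigl(xa+(1-x)h^{-1}(b)\bigr)\td x.
\]
This is the same $\beta$-weighted form used in the proof of Theorem~\ref{Jak-Pecaric-2010-Thm}, so the two bounds will come from the two Jensen inequalities. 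For the upper bound I would apply Jensen's discrete inequality~\eqref{Jensen-Ineq-Eq} to the convex function $h^{(n+1)}$ at the convex combination $xa+(1-x)h^{-1}(b)$, obtaining $h^{(n+1)}(xa+(1-x)h^{-1}(b))\le xh^{(n+1)}(a)+(1-x)h^{(n+1)}\bigl(h^{-1}(b)\bigr)$, then integrate against the weight $(1-x)^{n+1}$; the elementary beta integrals $\int_0^1(1-x)^{n+1}x\,\td x=\frac1{(n+2)(n+3)}$ and $\int_0^1(1-x)^{n+2}\td x=\frac1{n+3}$ collapse the constant, via $(n+1)!(n+2)(n+3)=(n+3)!$, to the right-hand side of~\eqref{basic-eq-Young-Jensen1}. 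For the lower bound I would instead apply Jensen's integral inequality~\eqref{Jensen-Ineq-Int-Eq} with $\phi=h^{(n+1)}$, $f(x)=xa+(1-x)h^{-1}(b)$, and the non-negative measure $\td\sigma=(1-x)^{n+1}\td x$; since $\int_0^1(1-x)^{n+1}\td x=\frac1{n+2}$, the barycenter of $f$ is
\[
\frac{\int_0^1(1-x)^{n+1}\bigl[xa+(1-x)h^{-1}(b)\bigr]\td x}{\int_0^1(1-x)^{n+1}\td x}=\frac{a+(n+2)h^{-1}(b)}{n+3},
\]
which is precisely the argument of $h^{(n+1)}$ on the left-hand side of~\eqref{basic-eq-Young-Jensen1}.

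The only genuine obstacle is bookkeeping the sign of the prefactor $\bigl[a-h^{-1}(b)\bigr]^{n+2}$, and this is exactly what drives the case split. When $h(a)>b$ we have $a>h^{-1}(b)$, the prefactor is positive, and both Jensen inequalities transfer directly. When $h(a)<b$ we have $a<h^{-1}(b)$, so $a-h^{-1}(b)<0$ and $\bigl[a-h^{-1}(b)\bigr]^{n+2}$ is positive precisely when $n$ is even; this explains why the stated inequalities hold unchanged for $n=2\ell$ but reverse for $n=2\ell+1$, since multiplying a correct inequality by a negative constant flips it. The substitution is valid in either order of $a$ and $h^{-1}(b)$, and the weight $(1-x)^{n+1}$ remains non-negative on $[0,1]$, so nothing further is needed there. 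Finally, replacing ``convex'' by ``concave'' reverses both Jensen inequalities, which, combined with the same sign analysis, yields the last sentence of the theorem.
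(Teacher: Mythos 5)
Your proposal follows essentially the same route as the paper's proof: reduce to the Cauchy-type remainder integral~\eqref{last-term-int} exactly as in the proof of Theorem~\ref{Qi-Grad-Thm2}, normalize to the weight $(1-x)^{n+1}$ on $[0,1]$, and apply Jensen's discrete inequality~\eqref{Jensen-Ineq-Eq} for one bound and Jensen's integral inequality~\eqref{Jensen-Ineq-Int-Eq} for the other; your sign analysis of $\bigl[a-h^{-1}(b)\bigr]^{n+2}$ just makes explicit the case split that the paper's outline leaves implicit. One caveat: for the lower bound you checked only that the barycenter reproduces the argument of $h^{(n+1)}$ and never checked the constant. What your computation actually produces, after restoring the overall factor $\frac{1}{(n+1)!}$ in front of~\eqref{last-term-int}, is the prefactor $\frac{\bigl[a-h^{-1}(b)\bigr]^{n+2}}{(n+1)!\,(n+2)}=\frac{\bigl[a-h^{-1}(b)\bigr]^{n+2}}{(n+2)!}$, not the $\frac{\bigl[a-h^{-1}(b)\bigr]^{n+2}}{n+2}$ printed on the left-hand side of~\eqref{basic-eq-Young-Jensen1}; the two agree only when $n=0$. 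The printed constant is in fact too large for $n\ge1$ (take $h(x)=x^3$ on $[0,1]$, $a=1$, $b=0$, $n=1$: the middle quantity equals $\frac14$ while the printed lower bound equals $\frac12$), so the mismatch is a defect of the quoted statement rather than of your argument --- but you should have flagged it instead of asserting that your bound matches the left-hand side as written.
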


\begin{proof}
This is the outline of the proof of~\cite[Theorem~3.4]{Young-Ineq.tex}.
\par
Considering the integral~\eqref{last-term-int} and substituting integral variables give
\begin{gather*}
\int_{h^{-1}(b)}^a(a-t)^{n+1}h^{(n+1)}(t)\td t
=\bigl[a-h^{-1}(b)\bigr]^{n+2}\\
\times\int_{0}^1(1-s)^{n+1}h^{(n+1)}\bigr(sa+(1-s)h^{-1}(b)\bigr)\td s.
\end{gather*}
Applying Jensen's inequalities~\eqref{Jensen-Ineq-Eq} and~\eqref{Jensen-Ineq-Int-Eq} to $h^{(n+1)}\bigr(sa+(1-s)h^{-1}(b)\bigr)$ in the above equation yield the double inequality~\eqref{basic-eq-Young-Jensen1} and its reversed version.
The proof of Theorem~\ref{Qi-Grad-Thm4} is complete.
\end{proof}

\begin{rem}
The double inequality~\eqref{basic-eq-Young-Jensen1} can be geometrically interpreted as
\begin{multline*}
\frac{\bigl[a-h^{-1}(b)\bigr]^{n+2}}{n+2}h^{(n+1)}\biggl(\frac{a+(n+2)h^{-1}(b)}{n+3}\biggr)\\
\le C-\sum_{k=1}^nh^{(k)}\bigl(h^{-1}(b)\bigr)\frac{\bigl[a-h^{-1}(b)\bigr]^{k+1}}{(k+1)!}\\
\le\bigl[a-h^{-1}(b)\bigr]^{n+2}\frac{h^{(n+1)}(a)+(n+2)h^{(n+1)}\bigl(h^{-1}(b)\bigr)}{(n+3)!},
\end{multline*}
where $C$ denotes the area showed in Figures~\ref{Young-Geom-M-1.jpg} to~\ref{Young-Geom-C-2-1.jpg}.
\end{rem}

\subsection{Refinements of Young's integral inequality via Taylor's mean value theorem of Cauchy's type remainder and integral inequalities of Hermite--Hadamard type for the product of two convex functions}

\begin{thm}[{\cite[Theorem~3.5]{Young-Ineq.tex}}]\label{Qi-Grad-Thm5}
Let $n\ge0$ and $h(x)\in C^{n+1}[0,c]$ such that $h(0)=0$ and $h(x)$ is strictly increasing on $[0,c]$ for $c>0$, let $h^{-1}$ be the inverse function of $h$, let $a\in[0,c]$ and $b\in[0,h(c)]$, and let $h^{(n+1)}(x)$ be nonnegative and convex on $[\alpha,\beta]$, where $\alpha,\beta$ are defined as in~\eqref{alpha-beta-def-eq}.
If $h(a)>b$, then
\begin{equation}
\begin{aligned}\label{Qi-Grad-Eq5}
&\quad\frac{\bigl[a-h^{-1}(b)\bigr]^{n+2}}{(n+1)!}\biggl[\frac1{2^n}h^{(n+1)}\biggl(\frac{a+h^{-1}(b)}2\biggr)\\
&\quad-\frac{2h^{(n+1)}(a)+h^{(n+1)}\bigl(h^{-1}(b)\bigr)}{6}\biggr]\\
&\le\int_0^ah(x)\td x+\int_0^bh^{-1}(x)\td x-ab\\
&\quad-\sum_{k=1}^nh^{(k)}\bigl(h^{-1}(b)\bigr)\frac{\bigl[a-h^{-1}(b)\bigr]^{k+1}}{(k+1)!}\\
&\le\frac{\bigl[a-h^{-1}(b)\bigr]^{n+2}}{(n+1)!}\frac{h^{(n+1)}(a)+2h^{(n+1)}\bigl(h^{-1}(b)\bigr)}{6}.
\end{aligned}
\end{equation}
If $h(a)<b$ and $n=2\ell$ for $\ell\ge0$, then
\begin{equation}
\begin{aligned}\label{Qi-Grad-Eq6}
&\quad\frac{\bigl[h^{-1}(b)-a\bigr]^{n+2}}{(n+1)!}\biggl[\frac1{2^n}h^{(n+1)}\biggl(\frac{a+h^{-1}(b)}2\biggr)\\
&\quad-\frac{2h^{(n+1)}(a)+h^{(n+1)}\bigl(h^{-1}(b)\bigr)}{6}\biggr]\\
&\le\int_0^ah(x)\td x+\int_0^bh^{-1}(x)\td x-ab\\
&\quad-\sum_{k=1}^nh^{(k)}\bigl(h^{-1}(b)\bigr) \frac{\bigl[a-h^{-1}(b)\bigr]^{k+1}}{(k+1)!}\\
&\le\frac{\bigl[h^{-1}(b)-a\bigr]^{n+2}}{(n+1)!}\frac{h^{(n+1)}(a)+2h^{(n+1)}\bigl(h^{-1}(b)\bigr)}{6}.
\end{aligned}
\end{equation}
If $a<h^{-1}(b)$ and $n=2\ell+1$ for $\ell\ge0$, the double inequality~\eqref{Qi-Grad-Eq6} is reversed.
\end{thm}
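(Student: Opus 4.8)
The plan is to follow verbatim the template already used for Theorems~\ref{Qi-Grad-Thm3} and~\ref{Qi-Grad-Thm4}: begin from the Taylor--Cauchy expansion of the remainder that was established in the proof of Theorem~\ref{Qi-Grad-Thm2}, namely
\begin{multline*}
\int_0^a h(x)\td x + \int_0^b h^{-1}(x)\td x - ab
-\sum_{k=1}^n h^{(k)}\bigl(h^{-1}(b)\bigr)\frac{\bigl[a-h^{-1}(b)\bigr]^{k+1}}{(k+1)!}\\
=\frac{1}{(n+1)!}\int_{h^{-1}(b)}^a (a-t)^{n+1} h^{(n+1)}(t)\td t,
\end{multline*}
and then estimate the integral~\eqref{last-term-int} by regarding its integrand as the product of the two functions $t\mapsto(a-t)^{n+1}$ and $t\mapsto h^{(n+1)}(t)$. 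The device replacing \v{C}eby\v{s}ev's inequality or Jensen's inequalities will be the Hermite--Hadamard type inequalities for the product of two nonnegative convex functions: if $f,g\colon[\mu,\nu]\to[0,\infty)$ are convex, then
\[
\frac{1}{\nu-\mu}\int_\mu^\nu f(x)g(x)\td x\le\frac{M(\mu,\nu)}{3}+\frac{N(\mu,\nu)}{6}
\]
and
\[
2f\Bigl(\tfrac{\mu+\nu}{2}\Bigr)g\Bigl(\tfrac{\mu+\nu}{2}\Bigr)\le\frac{1}{\nu-\mu}\int_\mu^\nu f(x)g(x)\td x+\frac{M(\mu,\nu)}{6}+\frac{N(\mu,\nu)}{3},
\]
where $M(\mu,\nu)=f(\mu)g(\mu)+f(\nu)g(\nu)$ and $N(\mu,\nu)=f(\mu)g(\nu)+f(\nu)g(\mu)$.

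In the case $h(a)>b$, i.e.\ $h^{-1}(b)<a$, I would take $\mu=h^{-1}(b)$, $\nu=a$, $f(t)=(a-t)^{n+1}$, and $g(t)=h^{(n+1)}(t)$. First I check the hypotheses: $g$ is nonnegative and convex by assumption, and $f$ is nonnegative on $[\mu,\nu]$ with $f''(t)=n(n+1)(a-t)^{n-1}\ge0$ there, hence convex. Because $f(a)=0$, the endpoint terms collapse, giving $M(\mu,\nu)=\bigl[a-h^{-1}(b)\bigr]^{n+1}h^{(n+1)}\bigl(h^{-1}(b)\bigr)$, $N(\mu,\nu)=\bigl[a-h^{-1}(b)\bigr]^{n+1}h^{(n+1)}(a)$, and $f\bigl(\tfrac{\mu+\nu}{2}\bigr)=\bigl[a-h^{-1}(b)\bigr]^{n+1}/2^{n+1}$. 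Substituting these into the two displayed inequalities, multiplying through by $\nu-\mu=a-h^{-1}(b)>0$, and dividing by $(n+1)!$ produces precisely the left- and right-hand bounds in~\eqref{Qi-Grad-Eq5}; the factor $2\cdot 2^{-(n+1)}=2^{-n}$ is exactly what creates the $1/2^n$ in the lower bound. This is the core computation and it is routine once the substitutions are in place.

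For the case $h(a)<b$, i.e.\ $a<h^{-1}(b)$, I would first reverse the orientation of~\eqref{last-term-int}: since $t-a>0$ on $[a,h^{-1}(b)]$ and $(a-t)^{n+1}=(-1)^{n+1}(t-a)^{n+1}$,
\[
\int_{h^{-1}(b)}^a (a-t)^{n+1} h^{(n+1)}(t)\td t=(-1)^{n}\int_a^{h^{-1}(b)} (t-a)^{n+1} h^{(n+1)}(t)\td t.
\]
I then apply the same two inequalities on $[\mu,\nu]=[a,h^{-1}(b)]$ with $f(t)=(t-a)^{n+1}$ (again nonnegative and convex) and $g(t)=h^{(n+1)}(t)$; now $f(a)=0$ makes the boundary terms collapse symmetrically, yielding the bounds in~\eqref{Qi-Grad-Eq6} with $\bigl[h^{-1}(b)-a\bigr]^{n+2}$ replacing $\bigl[a-h^{-1}(b)\bigr]^{n+2}$. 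When $n=2\ell$ the sign $(-1)^n=+1$ preserves the direction and~\eqref{Qi-Grad-Eq6} follows at once, whereas when $n=2\ell+1$ the sign $(-1)^n=-1$ flips the estimate, which is exactly why~\eqref{Qi-Grad-Eq6} is reversed in that subcase.

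The only genuinely delicate point, and where I expect to spend the most care, is the sign and orientation bookkeeping in the case $a<h^{-1}(b)$: one must track the factor $(-1)^n$ coming from rewriting $(a-t)^{n+1}$, confirm that it is $f(t)=(t-a)^{n+1}$ rather than $(a-t)^{n+1}$ which is the nonnegative convex factor on $[a,h^{-1}(b)]$ so that the product inequalities genuinely apply, and then decide for each parity of $n$ whether the resulting estimate transfers as stated or reversed. Everything else---the Taylor--Cauchy identity and the algebraic simplification of $M$, $N$, and the midpoint term---is either already available from the proof of Theorem~\ref{Qi-Grad-Thm2} or is a direct substitution.
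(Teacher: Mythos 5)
Your proposal is correct and follows essentially the same route as the paper: reduce to the integral~\eqref{last-term-int} via the Taylor--Cauchy identity from the proof of Theorem~\ref{Qi-Grad-Thm2}, then apply the product Hermite--Hadamard inequality~\eqref{Pachpatte-RGMIA-product} with $f(t)=(a-t)^{n+1}$ (or $(t-a)^{n+1}$) and $g=h^{(n+1)}$. Your substitutions of $M$, $N$, and the midpoint term, and your $(-1)^n$ bookkeeping in the case $a<h^{-1}(b)$, all check out and in fact supply details the paper's outline omits.
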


\begin{proof}
This is the outline of the proof of~\cite[Theorem~3.5]{Young-Ineq.tex}.
\par
Let $f(x)$ and $g(x)$ be nonnegative and convex functions on $[\mu,\nu]$. Then
\begin{multline}\label{Pachpatte-RGMIA-product}
2f\biggl(\frac{\mu+\nu}2\biggr)g\biggl(\frac{\mu+\nu}2\biggr)-\frac{1}{6}M(\mu,\nu)-\frac13N(\mu,\nu)\\
\le\frac{1}{\nu-\mu}\int_{\mu}^{\nu}f(x)g(x)\td x
\le\frac{1}{3}M(\mu,\nu)+\frac16N(\mu,\nu),
\end{multline}
where
\begin{equation*}
M(\mu,\nu)=f(\mu)g(\mu)+f(\nu)g(\nu) \quad\text{and}\quad N(\mu,\nu)=f(\mu)g(\nu)+f(\nu)g(\mu).
\end{equation*}
The double inequality~\eqref{Pachpatte-RGMIA-product} can be found in~\cite{Pachpatte-RGMIA-03, Wu-Qi-Maejo.tex, Hong-Ping-Qi-JNSA.tex, H-H-(am)-YHP-Missouri.tex} and closely related references therein. Applying~\eqref{Pachpatte-RGMIA-product} in the integral~\eqref{last-term-int}
arrives at the double inequalities in~\eqref{Qi-Grad-Eq5} and~\eqref{Qi-Grad-Eq6}. The proof of Theorem~\ref{Qi-Grad-Thm5} is complete.
\end{proof}

\begin{rem}
The double inequalities~\eqref{Qi-Grad-Eq5} and~\eqref{Qi-Grad-Eq6} can be geometrically interpreted as
\begin{multline*}
\frac{\bigl[a-h^{-1}(b)\bigr]^{n+2}}{(n+1)!}\biggl[\frac1{2^n}h^{(n+1)}\biggl(\frac{a+h^{-1}(b)}2\biggr)
-\frac{2h^{(n+1)}(a)+h^{(n+1)}\bigl(h^{-1}(b)\bigr)}{6}\biggr]\\
\le C-\sum_{k=1}^nh^{(k)}\bigl(h^{-1}(b)\bigr)\frac{\bigl[a-h^{-1}(b)\bigr]^{k+1}}{(k+1)!}\\
\le\frac{\bigl[a-h^{-1}(b)\bigr]^{n+2}}{(n+1)!}\frac{h^{(n+1)}(a)+2h^{(n+1)}\bigl(h^{-1}(b)\bigr)}{6}
\end{multline*}
and
\begin{multline*}
\frac{\bigl[h^{-1}(b)-a\bigr]^{n+2}}{(n+1)!}\biggl[\frac1{2^n}h^{(n+1)}\biggl(\frac{a+h^{-1}(b)}2\biggr)
-\frac{2h^{(n+1)}(a)+h^{(n+1)}\bigl(h^{-1}(b)\bigr)}{6}\biggr]\\
\le C-\sum_{k=1}^nh^{(k)}\bigl(h^{-1}(b)\bigr) \frac{\bigl[a-h^{-1}(b)\bigr]^{k+1}}{(k+1)!}\\
\le\frac{\bigl[h^{-1}(b)-a\bigr]^{n+2}}{(n+1)!}\frac{h^{(n+1)}(a)+2h^{(n+1)}\bigl(h^{-1}(b)\bigr)}{6},
\end{multline*}
where $C$ denotes the area showed in Figures~\ref{Young-Geom-M-1.jpg} to~\ref{Young-Geom-C-2-1.jpg}.
\end{rem}

\subsection{Three examples showing refinements of Young's integral inequality}\label{sec-three-examp}

\subsubsection{First example}
In~\cite[Section~3]{Young-Hoorfar-Qi.tex}, the double inequality~\eqref{Hoofar-Qi-eq4} was applied to obtain the estimate
\begin{align*}
9.000042866\dotsc
&=\frac{4\sqrt[4]{125}\,}{27}\bigl(3-2\sqrt[4]{5}\,\bigr)^2\\
&<\int^{3}_{0}\sqrt[4]{x^{4}+1}\,\td x +\int^{3}_{1}\sqrt[4]{x^{4}-1}\,\td x-9\\
&<\frac{27}{2\sqrt[4]{82^3}\,}\bigl(3-2\sqrt[4]{5}\,\bigr)^2\\
&=9.000042871\dotsc
\end{align*}
whose gap between the upper and lower bounds is $0.000000005\dotsc$ and which refines a known result
\begin{equation*}
9<\int^{3}_{0}\sqrt[4]{x^{4}+1}\,\td x +\int^{3}_{1}\sqrt[4]{x^{4}-1}\,\td x<9.0001
\end{equation*}
\par
In~\cite[Example~2.5]{MIA-2010-13-43} and~\cite[Remark~2.7]{MIA-2010-13-43}, it was obtained that
\begin{equation*}
9.000042866<\int^{3}_{0}\sqrt[4]{x^{4}+1}\,\td x +\int^{3}_{1}\sqrt[4]{x^{4}-1}\,\td x<9.000042868880
\end{equation*}
and
\begin{equation*}
9.000042868058<\int^{3}_{0}\sqrt[4]{x^{4}+1}\,\td x +\int^{3}_{1}\sqrt[4]{x^{4}-1}\,\td x<9.000042868066.
\end{equation*}
whose gaps between the upper and lower bounds are
\begin{equation*}
0.0000000028\dotsc\quad \text{and}\quad 0.000000000008\dotsc
\end{equation*}
respectively.
\par
In~\cite[Example~2.1]{Jak-Pecaric-2009-AEJM}, it was estimated that
\begin{equation*}
9.00004286765564<\int^{3}_{0}\sqrt[4]{x^{4}+1}\,\td x +\int^{3}_{1}\sqrt[4]{x^{4}-1}\,\td x<9.00004286805781.
\end{equation*}
whose gap between the upper and lower bounds is $0.0000000004021\dotsc$.
\par
In~\cite[Example~4.1]{Young-Ineq.tex}, by virtue of the double inequality~\eqref{basic-eq-Young-Jensen1}, the above double inequality was refined as
\begin{align*}
9.0000428983186013\dotsc
&=\frac{\bigl(3-80^{1/4}\bigr)^3}{3}\frac{3072\bigl(\sqrt[4]{95}\, \sqrt{2}\,+3\bigr)^2}{\bigl[\bigl(\sqrt[4]{95}\,\sqrt{2}\,+3\bigr)^4+256\bigr]^{7/4}}\\
&\quad+9+\frac{8\times5^{3/4}}{27}\frac{\bigl(3-80^{1/4}\bigr)^2}{2!}\\
&\ge\int^{3}_{0}\sqrt[4]{x^{4}+1}\,\td x+\int^{3}_{1}\sqrt[4]{x^{4}-1}\,\td x\\
&\ge\frac{\bigl(3-80^{1/4}\bigr)^3}{4!}\biggl(\frac{27}{82^{7/4}}+3\times\frac{4\sqrt{5}\,}{729}\biggr)\\
&\quad+9+\frac{8\times5^{3/4}}{27}\frac{\bigl(3-80^{1/4}\bigr)^2}{2!}\\
&=9.0000428680640760\dotsc
\end{align*}
whose gap between the upper and lower bounds is $0.00000003025452\dotsc$.

\subsubsection{Second example}
In~\cite[Example~4.2]{Young-Ineq.tex}, by virtue of the double inequality~\eqref{theorem2.3-mia}, it follows that
\begin{equation}
\begin{aligned}\label{exp-recip-bounds}
0.364469045537996606\dotsc
&=\frac14+\biggl(\frac12-\frac1{\ln2}\biggr)\biggl[\frac1{\exp[\frac12(\frac12+\frac1{\ln2})]}-\frac12\biggr]\\
&\le\int_0^{1/2}\frac1{e^{1/x}}\td x-\int_0^{1/2}\frac1{\ln x}\td x\\
&\le\frac14+\frac12\biggl(\frac12-\frac1{\ln2}\biggr)\biggl(\frac1{e^2}-\frac12\biggr)\\
&=0.421883810040011829\dotsc.
\end{aligned}
\end{equation}
The gap between the upper and lower bounds in the double inequality~\eqref{exp-recip-bounds} is $0.057414764502015\dotsc$.

\subsubsection{Third example}
In~\cite[Example~4.3]{Young-Ineq.tex}, by virtue of the double inequality~\eqref{Qi-Grad-Eq5}, we can obtain the estimate
\begin{equation}\label{exp-log-sqrt-ineq}
2.044751320\dotsc\le\int_0^1e^{t^2}\td t+\int_0^1\sqrt{\ln(1+t)}\,\td t\le2.060536019\dotsc.
\end{equation}
The gap between the upper and lower bounds in the double inequality~\eqref{exp-log-sqrt-ineq} is $0.01578469\dotsc$.

\section{New refinements of Young's integral inequality via P\'olya's type integral inequalities}

In this section, by virtue of P\'olya's type integral inequalities~\cite{Polya-MIA-3829.tex, S-Y-Q-n-D.tex}, we establish some new refinements in terms of higher order derivatives.

\subsection{Refinements of Young's integral inequality in terms of bounds of the first derivative}

\begin{thm}\label{polya-young-thm}
Let $h(x)$ be a strictly increasing function on $[0,c]$ for $c>0$ and let $h^{-1}$ be the inverse function of $h$. If $h(0)=0$, $a\in[0,c]$, $b\in[0,h(c)]$, $L$ and $U$ are real constants, and $L\le h'(x)\le U$ on $(\alpha,\beta)$, then
\begin{equation}
\begin{aligned}\label{polya-young-ineq}
&\quad\frac{LU\bigl[a-h^{-1}(b)\bigr]^2-2\bigl[a-h^{-1}(b)\bigr][Lh(a)-Ub] +[h(a)-b]^2}{2(U-L)}\\
&\le\int_0^ah(x)\td x+\int_0^bh^{-1}(x)\td x-bh^{-1}(b)\\
&\le-\frac{LU\bigl[a-h^{-1}(b)\bigr]^2-2\bigl[a-h^{-1}(b)\bigr] [Uh(a)-Lb] +[h(a)-b]^2}{2(U-L)}.
\end{aligned}
\end{equation}
\end{thm}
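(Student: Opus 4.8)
The plan is to follow the same template as the earlier refinements in this paper: first use the identity already established in~\eqref{hoorfar-qi-eq5} to collapse the middle member of~\eqref{polya-young-ineq} into a single oriented integral, and then feed that integral into a P\'olya's type integral inequality from~\cite{Polya-MIA-3829.tex, S-Y-Q-n-D.tex}. Concretely, the third line of~\eqref{hoorfar-qi-eq5}, equivalently~\eqref{Pecaric-Javic-eq}, gives
\[
\int_0^a h(x)\td x+\int_0^b h^{-1}(x)\td x-bh^{-1}(b)=\int_{h^{-1}(b)}^{a}h(x)\td x,
\]
so the quantity sandwiched in~\eqref{polya-young-ineq} is nothing but $\int_{h^{-1}(b)}^{a}h(x)\td x$. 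Thus everything reduces to a two-sided estimation of $\int_{h^{-1}(b)}^{a}h$ under the single hypothesis $L\le h'(x)\le U$ on $(\alpha,\beta)$, with $\alpha,\beta$ as in~\eqref{alpha-beta-def-eq}.

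Second, I would invoke P\'olya's type inequality in the following geometric form. On $[\alpha,\beta]$ the constraint $L\le h'\le U$ forces $h$ to lie, at every point, above the upper envelope of the two lines issuing from the endpoints with slopes $L$ (from the left endpoint) and $U$ (from the right endpoint), and below the lower envelope of the two lines issuing from the endpoints with slopes $U$ (from the left) and $L$ (from the right); these envelopes are the piecewise-linear ``bang--bang'' functions that rise first with one extreme slope and then with the other. Integrating the pointwise lower enclosure yields the smallest admissible value of the integral, and integrating the pointwise upper enclosure yields the largest; this is precisely the content of P\'olya's type integral inequality, and it produces explicit rational expressions in the endpoint data $h(\alpha),h(\beta),L,U$ and the length $\beta-\alpha$.

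Third comes the bookkeeping. Writing $\delta=a-h^{-1}(b)$ and $s=h(a)-b$, I would compute the two envelope integrals by splitting each at its breakpoint, namely the intersection abscissa of the two relevant lines, located at signed distances $\frac{U\delta-s}{U-L}$ and $\frac{s-L\delta}{U-L}$ from the endpoints; both carry the sign of $\delta$ precisely because $s/\delta\in[L,U]$. Integrating the two linear pieces and clearing the common denominator $2(U-L)$ collapses the lower envelope integral to $b\delta+\frac{s^2-2Ls\delta+LU\delta^2}{2(U-L)}$, which, upon substituting $s=h(a)-b$, is exactly the lower numerator $LU\delta^2-2\delta[Lh(a)-Ub]+[h(a)-b]^2$ over $2(U-L)$; the upper envelope is handled by the symmetric computation and yields $-\bigl(LU\delta^2-2\delta[Uh(a)-Lb]+[h(a)-b]^2\bigr)/[2(U-L)]$. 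These are precisely the two outer members of~\eqref{polya-young-ineq}.

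Finally, the main obstacle is not any individual estimate but the uniform handling of the two orderings $a>h^{-1}(b)$ and $a<h^{-1}(b)$ together with the correct orientation of $\int_{h^{-1}(b)}^{a}$. When $a<h^{-1}(b)$ the integral is negative and the roles of the two envelopes interchange under the sign reversal $\int_{h^{-1}(b)}^{a}=-\int_{a}^{h^{-1}(b)}$; I would either treat this regime by relabeling the endpoints and negating, or simply observe that the closed forms are written so that $\delta=a-h^{-1}(b)$ and $s=h(a)-b$ carry the same (common) sign in both regimes, so that the identical algebra applies verbatim. One must also keep the standing assumption $L<U$ for the denominator $2(U-L)$ to make sense; the degenerate case $L=U$ forces $h(x)=Lx$, for which $\int_{h^{-1}(b)}^{a}h$ is computed directly and the bounds are recovered as the common limiting value.
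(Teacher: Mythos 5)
Your proposal is correct and follows essentially the same route as the paper: reduce the middle member of \eqref{polya-young-ineq} to $\int_{h^{-1}(b)}^{a}h(x)\td x$ via the identity \eqref{3integrals-eq}, then apply the two-sided P\'olya--Iyengar bound \eqref{qizhcor14} for a function with $L\le h'\le U$, checking that the resulting closed forms are orientation-invariant in $\delta=a-h^{-1}(b)$. The only difference is that you rederive that bound by integrating the two piecewise-linear ``bang--bang'' envelopes (your breakpoints and the value $b\delta+\frac{s^2-2Ls\delta+LU\delta^2}{2(U-L)}$ agree with the paper's lower member after expanding $-2\delta[Lh(a)-Ub]$), whereas the paper simply cites the inequality from \cite{rpss,gaz,Polya-MIA-3829.tex}; you also sensibly flag the degenerate case $L=U$, which the paper leaves implicit.
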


\begin{proof}
Let $f(x)$ be continuous on $[a, b]$ and differentiable on $(a,b)$. If $f(x)$ is not identically a constant and $m\le f'(x)\le M$ in
$(a,b)$, then
\begin{multline}\label{4.466}
\biggl|\frac1{b-a}\int_a^bf(x)\td x-\frac{f(a)+f(b)}2\biggr|\\
\le\frac{(M-m)(b-a)}2\Biggl[\frac14-\frac{\bigl(\frac{f(b)-f(a)}
{b-a}-\frac{M+m}2\bigr)^2}{(M-m)^2}\Biggr].
\end{multline}
The inequality~\eqref{4.466} can be rearranged as a double inequality
\begin{multline}\label{qizhcor14}
\frac{mM(b-a)^2-2(b-a)[mf(b)-Mf(a)]+[f(b)-f(a)]^2}{2(M-m)}\le\int_a^bf(x)\td x\\
\le-\frac{mM(b-a)^2-2(b-a)[Mf(b)-mf(a)]+[f(b)-f(a)]^2}{2(M-m)}.
\end{multline}
These inequalities can be found in~\cite[Theorem~2]{rpss}, the papers~\cite{pcerone, Cerone-Hacet-2015, cer1, Cerone-Dragomir-MIA2000, Cheng-AML-2001}, \cite[Proposition~2]{gaz}, \cite[Section~5]{Polya-MIA-3829.tex} and closely related references therein.
\par
The area $C$ can be computed by~\eqref{C-area-exp-unif} in Remark~\ref{C-area-exp-unif-rem}, which can be estimated, by applying the double inequality~\eqref{qizhcor14}, as
\begin{multline*}
\frac{LU\bigl[a-h^{-1}(b)\bigr]^2-2\bigl[a-h^{-1}(b)\bigr][Lh(a)-Ub] +[h(a)-b]^2}{2(U-L)}
\le\int_{h^{-1}(b)}^{a}h(x)\td x\\
\le-\frac{LU\bigl[a-h^{-1}(b)\bigr]^2-2\bigl[a-h^{-1}(b)\bigr] [Uh(a)-Lb] +[h(a)-b]^2}{2(U-L)}.
\end{multline*}
Since
\begin{equation*}
\int_{h^{-1}(b)}^{a}h(x)\td x-b\bigl[a-h^{-1}(b)\bigr]=\int_0^ah(x)\td x+\int_0^bh^{-1}(x)\td x-ab,
\end{equation*}
that is,
\begin{equation}\label{3integrals-eq}
\int_{h^{-1}(b)}^{a}h(x)\td x=\int_0^ah(x)\td x+\int_0^bh^{-1}(x)\td x-bh^{-1}(b),
\end{equation}
the double inequality~\eqref{polya-young-ineq} follows straightforwardly.
The proof of Theorem~\ref{polya-young-thm} is complete.
\end{proof}

\subsection{Refinements of Young's integral inequality in terms of bounds of the second derivative}

\begin{thm}\label{second-der-thm}
Let $h(x)$ be a strictly increasing function on $[0,c]$ for $c>0$, let $h^{-1}$ be the inverse function of $h$, let $h(0)=0$, $a\in[0,c]$, and $b\in[0,h(c)]$, and let $L$ and $U$ be real constants such that $L\le h''(x)\le U$ on $(\alpha,\beta)$. Then
\begin{multline}\label{second-deriv-H-Y}
\frac{L\bigl[a^3-\bigl(h^{-1}(b)\bigr)^3\bigr]}{6} +\frac{\Biggl(\begin{gathered}b-h(a)+ah'(a)-h^{-1}(b)h'\bigl(h^{-1}(b)\bigr)\\ +L\bigl[\bigl(h^{-1}(b)\bigr)^2-a^2\bigr]/2\end{gathered}\Biggr)^2} {2\bigl[\bigl(h^{-1}(b)-a\bigr)L-h'\bigl(h^{-1}(b)\bigr)+h'(a)\bigr]}\\
\le\int_0^ah(x)\td x+\int_0^bh^{-1}(x)\td x -ah(a)+\frac{a^2h'(a)-\bigl[h^{-1}(b)\bigr]^2h'\bigl(h^{-1}(b)\bigr)}{2}\\
\le\frac{U\bigl[a^3-\bigl(h^{-1}(b)\bigr)^3\bigr]}{6}+\frac{\Biggl(\begin{gathered}b-h(a)+ah'(a)-h^{-1}(b)h'\bigl(h^{-1}(b)\bigr)\\
+U\bigl[\bigl(h^{-1}(b)\bigr)^2-a^2\bigr]/2\end{gathered}\Biggr)^2} {2\bigl[\bigl(h^{-1}(b)-a\bigr)U-h'\bigl(h^{-1}(b)\bigr)+h'(a)\bigr]}.
\end{multline}
\end{thm}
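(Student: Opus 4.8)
The plan is to collapse the middle member of~\eqref{second-deriv-H-Y} to a single weighted integral of $h''$ and then to estimate that integral by a (weighted) Cauchy--Schwarz inequality of P\'olya's type. First I would invoke the identity~\eqref{3integrals-eq}, $\int_0^a h(x)\td x+\int_0^b h^{-1}(x)\td x-bh^{-1}(b)=\int_{h^{-1}(b)}^{a}h(x)\td x$, so that the middle member of~\eqref{second-deriv-H-Y} equals $\int_{h^{-1}(b)}^{a}h(x)\td x+bh^{-1}(b)-ah(a)+\frac12\bigl[a^2h'(a)-(h^{-1}(b))^2h'(h^{-1}(b))\bigr]$. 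Integrating $\int_{h^{-1}(b)}^{a}h(x)\td x$ by parts once and then integrating the resulting $\int_{h^{-1}(b)}^{a}xh'(x)\td x$ by parts again with $v=x^2/2$, I expect every boundary term to cancel in pairs, leaving the clean reduction
\[
\int_0^a h(x)\td x+\int_0^b h^{-1}(x)\td x-ah(a)+\frac{a^2h'(a)-[h^{-1}(b)]^2h'(h^{-1}(b))}{2}=\frac12\int_{h^{-1}(b)}^{a}x^2h''(x)\td x.
\]

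With this reduction the theorem becomes a two-sided estimate of $\frac12\int_{h^{-1}(b)}^{a}x^2h''(x)\td x$ under the hypothesis $L\le h''(x)\le U$. The second step is to write $x^2h''=Lx^2+x^2(h''-L)$ and apply the Cauchy--Schwarz inequality $\bigl(\int x\,\td\nu\bigr)^2\le\int\td\nu\int x^2\,\td\nu$ to the nonnegative measure $\td\nu=[h''(x)-L]\td x$; this produces the lower bound. Symmetrically, writing $x^2h''=Ux^2-x^2(U-h'')$ and applying the same inequality to $\td\mu=[U-h''(x)]\td x$ produces the upper bound. It then remains to evaluate the three moments appearing in the resulting expressions, namely $\int_{h^{-1}(b)}^{a}[h''-L]\td x=h'(a)-h'(h^{-1}(b))-L[a-h^{-1}(b)]$, the first moment $\int_{h^{-1}(b)}^{a}x[h''-L]\td x$ (one further integration by parts turns it into $b-h(a)+ah'(a)-h^{-1}(b)h'(h^{-1}(b))+L[(h^{-1}(b))^2-a^2]/2$), and $\int_{h^{-1}(b)}^{a}x^2\td x=[a^3-(h^{-1}(b))^3]/3$; substituting these reproduces exactly the denominators, squared numerators, and leading cubic terms in~\eqref{second-deriv-H-Y}, and likewise with $U$ in place of $L$.

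The step I expect to be the main obstacle is controlling the orientation of the interval $[\alpha,\beta]$. All the integration-by-parts identities and all three moment evaluations are valid as signed integrals regardless of whether $a$ or $h^{-1}(b)$ is larger, but the Cauchy--Schwarz step needs a genuinely nonnegative measure and hence a positively oriented interval; thus the double inequality~\eqref{second-deriv-H-Y} comes out in the stated direction precisely when $a>h^{-1}(b)$, that is, $h(a)>b$, and one must monitor the signs of the two ``$U-L$'' surrogates $\int_{h^{-1}(b)}^a[h''-L]\td x$ and $\int_{h^{-1}(b)}^a[U-h'']\td x$ to see that the inequality reverses when $a<h^{-1}(b)$. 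A minor preliminary point is regularity: to make $h''$ meaningful and the two integrations by parts legitimate I would assume $h'$ to be absolutely continuous on $[\alpha,\beta]$, which is implicit in the hypothesis $L\le h''\le U$.
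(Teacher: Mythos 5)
Your proposal is correct, and it reproduces the paper's overall skeleton --- reduce the middle member to $\int_{h^{-1}(b)}^{a}h(x)\td x$ via~\eqref{3integrals-eq} and then estimate that integral using the two-sided bound on $h''$ --- but it differs in one substantive way: the paper simply \emph{cites} the auxiliary inequality~\eqref{second-deriv-23} from the references \cite{YUGOS.TEX, ineq-weighted-integral.tex} and applies it as a black box, whereas you actually \emph{prove} it. Your derivation is sound: two integrations by parts give
\begin{equation*}
\int_{h^{-1}(b)}^{a}h(x)\td x+bh^{-1}(b)-ah(a)+\frac{a^2h'(a)-\bigl[h^{-1}(b)\bigr]^2h'\bigl(h^{-1}(b)\bigr)}{2}
=\frac12\int_{h^{-1}(b)}^{a}x^2h''(x)\td x,
\end{equation*}
and the Cauchy--Schwarz inequality applied to the nonnegative measures $[h''(x)-L]\td x$ and $[U-h''(x)]\td x$ yields exactly the stated bounds; I checked that your three moment evaluations reproduce the cubic terms, the squared numerators, and the denominators of~\eqref{second-deriv-H-Y} verbatim. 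What your route buys is transparency: it shows that the bracketed denominators are precisely the total masses $\int\bigl[h''-L\bigr]\td x$ and $\int\bigl[U-h''\bigr]\td x$ (hence nonnegative when $a>h^{-1}(b)$, and zero only in the degenerate case $h''\equiv L$ or $h''\equiv U$), and it makes explicit the orientation caveat --- the inequality as displayed requires $a>h^{-1}(b)$ and reverses otherwise --- which the paper's statement and proof pass over in silence. What the paper's route buys is brevity and a direct link to the established P\'olya/Iyengar-type literature. Your regularity remark (absolute continuity of $h'$) is also a reasonable tightening of hypotheses that the paper leaves implicit.
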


\begin{proof}
In~\cite[Corallary]{YUGOS.TEX} and~\cite[Corallary~1.2]{ineq-weighted-integral.tex}, it was acquired that, if $f(x)\in C([a,b])$ satisfying $N\le f''(x)\le M$ on $(a,b)$, then
\begin{multline}\label{second-deriv-23}
\frac{N(b^3-a^3)}{6} +\frac{\bigl[f(a)-f(b)+bf'(b)-af'(a)+N\bigl(a^2-b^2\bigr)/2\bigr]^2} {2[(a-b)N-f'(a)+f'(b)]}\\
 \le\int_a^bf(x)\td x -bf(b)+af(a)+\frac{b^2f'(b)-a^2f'(a)}{2}\\
 \le\frac{M(b^3-a^3)}{6}+\frac{\bigl[f(a)-f(b)+bf'(b)-af'(a) +M\bigl(a^2-b^2\bigr)/2\bigr]^2} {2[(a-b)M-f'(a)+f'(b)]}.
\end{multline}
Applying the double inequality~\eqref{second-deriv-23} to the integral $\int_{h^{-1}(b)}^{a}h(x)\td x$ and considering Remark~\ref{C-area-exp-unif-rem} yield
\begin{multline}\label{second-deriv-H}
\frac{L\bigl[a^3-\bigl(h^{-1}(b)\bigr)^3\bigr]}{6} +\frac{\Biggl(\begin{gathered}b-h(a)+ah'(a)-h^{-1}(b)h'\bigl(h^{-1}(b)\bigr)\\ +L\bigl[\bigl(h^{-1}(b)\bigr)^2-a^2\bigr]/2\end{gathered}\Biggr)^2} {2\bigl[\bigl(h^{-1}(b)-a\bigr)L-h'\bigl(h^{-1}(b)\bigr)+h'(a)\bigr]}\\
\le\int_{h^{-1}(b)}^ah(x)\td x -ah(a)+bh^{-1}(b)+\frac{a^2h'(a)-\bigl[h^{-1}(b)\bigr]^2h'\bigl(h^{-1}(b)\bigr)}{2}\\
\le\frac{U\bigl[a^3-\bigl(h^{-1}(b)\bigr)^3\bigr]}{6}+\frac{\Biggl(\begin{gathered}b-h(a)+ah'(a)-h^{-1}(b)h'\bigl(h^{-1}(b)\bigr)\\
+U\bigl[\bigl(h^{-1}(b)\bigr)^2-a^2\bigr]/2\end{gathered}\Biggr)^2} {2\bigl[\bigl(h^{-1}(b)-a\bigr)U-h'\bigl(h^{-1}(b)\bigr)+h'(a)\bigr]}.
\end{multline}
Substituting~\eqref{3integrals-eq} into~\eqref{second-deriv-H} results in the double inequality~\eqref{second-deriv-H-Y}.
The proof of Theorem~\ref{second-der-thm} is complete.
\end{proof}

\subsection{Refinements of Young's integral inequality in terms of bounds of higher order derivatives}

\begin{thm}\label{further=beog-polya}
Let $h(x)$ be a strictly increasing function on $[0,c]$ for $c>0$, let $h^{-1}$ be the inverse of $h$, let $h(0)=0$, $a\in[0,c]$, and $b\in[0,h(c)]$, and let $h(x)$ have the $(n+1)$-th derivative on $[0,c]$ such that $L\le h^{(n+1)}(x)\le U$ on $(\alpha,\beta)$. Then, for all $t$ between $a$ and $h^{-1}(b)$,
\begin{enumerate}
\item
when $n$ is a nonnegative odd integer,
\begin{multline}\label{R-beograd21}
\sum_{i=0}^{n+2}\frac{(-1)^i}{i!}\Bigl[S_{n+2}^{(i)}\bigl(h;h^{-1}(b),h^{-1}(b),L\bigr)-S_{n+2}^{(i)}(h;a,a,L)\Bigr]t^i\\
\le\int_0^ah(x)\td x+\int_0^bh^{-1}(x)\td x-bh^{-1}(b)\\
\le \sum_{i=0}^{n+2}\frac{(-1)^i}{i!}\Bigl[S_{n+2}^{(i)}\bigl(h;h^{-1}(b),h^{-1}(b),U\bigr)-S_{n+2}^{(i)}(h;a,a,U)\Bigr]t^i;
\end{multline}
\item
when $n$ is a nonnegative even integer,
\begin{multline}\label{R-beograd22}
\sum_{i=0}^{n+2}\frac{(-1)^i}{i!}\Bigl[S_{n+2}^{(i)}\bigl(h;h^{-1}(b),h^{-1}(b),L\bigr)-S_{n+2}^{(i)}(h;a,a,U)\Bigr]t^i\\
\le\int_0^ah(x)\td x+\int_0^bh^{-1}(x)\td x-bh^{-1}(b)\\
\le \sum_{i=0}^{n+2}\frac{(-1)^i}{i!}\Bigl[S_{n+2}^{(i)}\bigl(h;h^{-1}(b),h^{-1}(b),U\bigr)-S_{n+2}^{(i)}(h;a,a,L)\Bigr]t^i;
\end{multline}
\end{enumerate}
where $L$ and $U$ are real constants,
\begin{equation*}
S_n(h;u,v,w)=\sum_{k=1}^{n-1}\frac{(-1)^k}{k!}u^kh^{(k-1)}(v)+(-1)^n\frac{w}{n!}u^n,
\end{equation*}
and
\begin{equation*}
S_n^{(k)}(h;u,v,w)=\frac{\partial^kS_n(h;u,v,w)}{\partial u^k}.
\end{equation*}
\end{thm}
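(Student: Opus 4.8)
\emph{Proof proposal.}
The plan is to follow the same two-step pattern used in the proofs of Theorems~\ref{polya-young-thm} and~\ref{second-der-thm}: first apply a P\'olya-type integral inequality for the $(n+1)$-th derivative to the single integral $\int_{h^{-1}(b)}^{a}h(x)\td x$, and then convert the outcome into a statement about Young's integral inequality by means of the identity~\eqref{3integrals-eq} in Remark~\ref{C-area-exp-unif-rem}. In this way the entire analytic content is carried by a suitable higher-order P\'olya-type inequality on $[\alpha,\beta]$, taken from~\cite{Polya-MIA-3829.tex, S-Y-Q-n-D.tex}, applied to $f=h$ with the endpoints $h^{-1}(b)$ and $a$ and with the free interior parameter $t$.

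To see why the function $S_{n+2}$ is the correct bookkeeping device, I would first record two elementary facts. Since $S_{n+2}(h;u,v,w)$ is a polynomial in $u$ of degree $n+2$, Taylor's expansion in the variable $u$ is exact and terminates, giving
\begin{equation*}
\sum_{i=0}^{n+2}\frac{(-1)^i}{i!}S_{n+2}^{(i)}(h;u,v,w)\,t^i=S_{n+2}(h;u-t,v,w),
\end{equation*}
so that the sums in~\eqref{R-beograd21} and~\eqref{R-beograd22} are precisely the differences $S_{n+2}\bigl(h;h^{-1}(b)-t,h^{-1}(b),w\bigr)-S_{n+2}(h;a-t,a,w)$ with $w\in\{L,U\}$. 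Next, writing $G_w(x)=S_{n+2}(h;x-t,x,w)$, a term-by-term differentiation telescopes and leaves
\begin{equation*}
-G_w'(x)=h(x)+\frac{(-1)^n}{(n+1)!}(x-t)^{n+1}\bigl[h^{(n+1)}(x)-w\bigr],\qquad G_w(t)=0.
\end{equation*}
Integrating this identity over the interval with endpoints $h^{-1}(b)$ and $a$ shows that each candidate bound $G_w\bigl(h^{-1}(b)\bigr)-G_w(a)$ equals $\int_{h^{-1}(b)}^{a}h(x)\td x$ plus the remainder $\frac{(-1)^n}{(n+1)!}\int_{h^{-1}(b)}^{a}(x-t)^{n+1}\bigl[h^{(n+1)}(x)-w\bigr]\td x$.

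It then remains to sign this remainder using $L\le h^{(n+1)}(x)\le U$, and here the parity of $n$ is decisive, which is also the main obstacle. When $n$ is odd, $n+1$ is even, so $(x-t)^{n+1}\ge0$ throughout, and a single constant ($w=L$ for the lower bound, $w=U$ for the upper bound) signs the whole remainder, yielding~\eqref{R-beograd21}. When $n$ is even, $(x-t)^{n+1}$ is an odd power and changes sign at the interior point $x=t$; exploiting $G_w(t)=0$, I would split the integral at $t$ and use one of $L,U$ on $[\alpha,t]$ and the other on $[t,\beta]$, which is exactly what produces the crossed pairing of $L$ and $U$ in~\eqref{R-beograd22}. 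The delicate point throughout is to track the direction of integration, that is, whether $a>h^{-1}(b)$ or $a<h^{-1}(b)$ (equivalently $b<h(a)$ or $b>h(a)$), against these sign patterns so that the lower and upper bounds land on the correct sides. Finally, substituting~\eqref{3integrals-eq} to replace $\int_{h^{-1}(b)}^{a}h(x)\td x$ by $\int_0^ah(x)\td x+\int_0^bh^{-1}(x)\td x-bh^{-1}(b)$ turns the two displays into~\eqref{R-beograd21} and~\eqref{R-beograd22} and completes the proof.
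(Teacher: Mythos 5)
Your proposal takes essentially the same route as the paper: both reduce the theorem to bounding the single integral $\int_{h^{-1}(b)}^{a}h(x)\td x$ by the higher-order P\'olya--Iyengar type inequality of \cite[Theorem]{YUGOS.TEX} and then substitute the identity~\eqref{3integrals-eq}. The only difference is that where the paper simply cites that inequality, you correctly re-derive it---via the exact terminating Taylor identity for the polynomial $S_{n+2}$ in its first argument, the telescoping computation of $-G_w'(x)=h(x)+\frac{(-1)^n}{(n+1)!}(x-t)^{n+1}\bigl[h^{(n+1)}(x)-w\bigr]$, and the split at $x=t$ that produces the crossed $L$/$U$ pairing for even $n$---which also makes explicit the direction-of-integration caveat (whether $a\gtrless h^{-1}(b)$) that the paper leaves implicit.
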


\begin{proof}
In~\cite[Theorem]{YUGOS.TEX}, it was discovered that, if $f\in C^n([a,b])$ has derivative of $(n+1)$-th order satisfying $N\le f^{(n+1)}(x)\le M$ on $(a,b)$, then, for all $t\in (a,b)$,
\begin{enumerate}
\item
when $n$ is a nonnegative odd integer,
\begin{multline}\label{beograd21}
\sum_{i=0}^{n+2}\frac{(-1)^i}{i!}\Bigl[S_{n+2}^{(i)}(f;a,a,N)-S_{n+2}^{(i)}(f;b,b,N)\Bigr]t^i\le
\int_a^bf(x)\td x \\
\le \sum_{i=0}^{n+2}\frac{(-1)^i}{i!}\Bigl[S_{n+2}^{(i)}(f;a,a,M)-S_{n+2}^{(i)}(f;b,b,M)\Bigr]t^i;
\end{multline}
\item
when $n$ is a nonnegative even integer,
\begin{multline}\label{beograd22}
\sum_{i=0}^{n+2}\frac{(-1)^i}{i!}\Bigl[S_{n+2}^{(i)}(f;a,a,N)-S_{n+2}^{(i)}(f;b,b,M)\Bigr]t^i\le\int_a^bf(x)\td x \\
\le \sum_{i=0}^{n+2}\frac{(-1)^i}{i!}\Bigl[S_{n+2}^{(i)}(f;a,a,M)-S_{n+2}^{(i)}(f;b,b,N)\Bigr]t^i.
\end{multline}
\end{enumerate}
These inequalities can also be found in~\cite{hung-acta-99, Mult-Qi-JMAA, Some-New-Iyengar-Type-Inequalities.tex, difference-Hermite--Hadamard.tex, S-Y-Q-n-D.tex} and closely related references therein.
\par
Applying~\eqref{beograd21} and~\eqref{beograd22} to the integral $\int_{h^{-1}(b)}^{a}h(x)\td x$ and considering Remark~\ref{C-area-exp-unif-rem} yield
\begin{enumerate}
\item
when $n$ is a nonnegative odd integer,
\begin{multline}\label{h-beograd21}
\sum_{i=0}^{n+2}\frac{(-1)^i}{i!}\Bigl[S_{n+2}^{(i)}\bigl(h;h^{-1}(b),h^{-1}(b),L\bigr)-S_{n+2}^{(i)}(h;a,a,L)\Bigr]t^i
\le\int_{h^{-1}(b)}^{a}h(x)\td x\\
\le \sum_{i=0}^{n+2}\frac{(-1)^i}{i!}\Bigl[S_{n+2}^{(i)}\bigl(h;h^{-1}(b),h^{-1}(b),U\bigr)-S_{n+2}^{(i)}(h;a,a,U)\Bigr]t^i;
\end{multline}
\item
when $n$ is a nonnegative even integer,
\begin{multline}\label{h-beograd22}
\sum_{i=0}^{n+2}\frac{(-1)^i}{i!}\Bigl[S_{n+2}^{(i)}\bigl(h;h^{-1}(b),h^{-1}(b),L\bigr)-S_{n+2}^{(i)}(h;a,a,U)\Bigr]t^i
\le\int_{h^{-1}(b)}^{a}h(x)\td x\\
\le \sum_{i=0}^{n+2}\frac{(-1)^i}{i!}\Bigl[S_{n+2}^{(i)}\bigl(h;h^{-1}(b),h^{-1}(b),U\bigr)-S_{n+2}^{(i)}(h;a,a,L)\Bigr]t^i.
\end{multline}
\end{enumerate}
Substituting~\eqref{3integrals-eq} into~\eqref{h-beograd21} and~\eqref{h-beograd22} results in~\eqref{R-beograd21} and~\eqref{R-beograd22}.
The proof of Theorem~\ref{further=beog-polya} is complete.
\end{proof}

\subsection{Refinements of Young's integral inequality in terms of $L^p$-norms}

\begin{thm}\label{norm=beog-polya}
Let $h(x)$ be a strictly increasing function on $[0,c]$ for $c>0$, let $h^{-1}$ be the inverse of $h$, let $h(0)=0$, $a\in[0,c]$, and $b\in[0,h(c)]$, and let $h(x)$ have the $(n+1)$-th derivative on $[\alpha,\beta]$ such that $h^{(n+1)}\in L^p([\alpha,\beta])$ for $p,q>0$ with $\frac1p+\frac1q=1$. Then, for all $t\in[\alpha,\beta]$,
\begin{enumerate}
\item
when $p,q>1$, we have
\begin{multline}\label{thm-est1}
\Biggl|\int_{h^{-1}(b)}^ah(x)\td x-\sum_{i=0}^{n}\frac{h^{(i)}\bigl(h^{-1}(b)\bigr)}{(i+1)!}\bigl(t-h^{-1}(b)\bigr)^{i+1} +\sum_{i=0}^{n}\frac{h^{(i)}(a)}{(i+1)!}(t-a)^{i+1}\Biggr|\\
\begin{aligned}
&\le\frac{\bigl(t-h^{-1}(b)\bigr)^{n+1+1/q}+(a-t)^{n+1+1/q}}{(n+1)!\sqrt[q]{nq+q+1}}\bigl\|h^{(n+1)}\bigr\|_{L^p([h^{-1}(b),a])}\\
&\le\frac{2\bigl(a-h^{-1}(b)\bigr)^{n+2}}{(n+1)!}\bigl\|h^{(n+1)}\bigr\|_{L^p([h^{-1}(b),a])};
\end{aligned}
\end{multline}
\item
when $p=\infty$, we have
\begin{multline}\label{thm-est2}
\Biggl|\int_{h^{-1}(b)}^ah(x)\td x-\sum_{i=0}^{n}\frac{h^{(i)}\bigl(h^{-1}(b)\bigr)}{(i+1)!}\bigl(t-h^{-1}(b)\bigr)^{i+1} +\sum_{i=0}^{n}\frac{h^{(i)}(a)}{(i+1)!}(t-a)^{i+1}\Biggr|\\
\begin{aligned}
&\le\frac{\bigl(t-h^{-1}(b)\bigr)^{n+2}+(a-t)^{n+2}}{(n+2)!}\bigl\|h^{(n+1)}\bigr\|_{L^\infty([h^{-1}(b),a])}\\
&\le\frac{2\bigl(a-h^{-1}(b)\bigr)^{n+2}}{(n+2)!}\bigl\|h^{(n+1)}\bigr\|_{L^\infty([h^{-1}(b),a])};
\end{aligned}
\end{multline}
\item
when $p=1$, we have
\begin{multline}\label{thm-est3}
\Biggl|\int_{h^{-1}(b)}^ah(x)\td x-\sum_{i=0}^{n}\frac{h^{(i)}\bigl(h^{-1}(b)\bigr)}{(i+1)!}\bigl(t-h^{-1}(b)\bigr)^{i+1} +\sum_{i=0}^{n}\frac{h^{(i)}(a)}{(i+1)!}(t-a)^{i+1}\Biggr|\\
\begin{aligned}
&\le\frac{\bigl(t-h^{-1}(b)\bigr)^{n+1}+(a-t)^{n+1}}{(n+1)!}\bigl\|h^{(n+1)}\bigr\|_{L([h^{-1}(b),a])}\\
&\le\frac{2\bigl(a-h^{-1}(b)\bigr)^{n+1}}{(n+1)!}\bigl\|h^{(n+1)}\bigr\|_{L([h^{-1}(b),a])}.
\end{aligned}
\end{multline}
\end{enumerate}
\end{thm}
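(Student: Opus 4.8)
The plan is to reduce the three displayed estimates to a single two‑sided Iyengar--P\'olya type bound for the integral $\int_{h^{-1}(b)}^{a}h(x)\,\td x$, obtained by the same machinery that produced Theorem~\ref{Qi-Grad-Thm2} but now with a free interior base point $t$ and a Taylor expansion performed at \emph{both} endpoints. Assuming without loss of generality that $h^{-1}(b)\le a$, so that the powers $(t-h^{-1}(b))^{n+1+1/q}$ and $(a-t)^{n+1+1/q}$ have nonnegative bases (the reverse ordering is handled by interchanging the roles of the two endpoints), I would first split the integral at $t\in[h^{-1}(b),a]$, writing $\int_{h^{-1}(b)}^{a}h(x)\,\td x=\int_{h^{-1}(b)}^{t}h(x)\,\td x+\int_{t}^{a}h(x)\,\td x$. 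If one wishes to return to the genuine Young form, the identity \eqref{3integrals-eq} of Remark~\ref{C-area-exp-unif-rem} converts this integral into $\int_0^ah(x)\,\td x+\int_0^bh^{-1}(x)\,\td x-bh^{-1}(b)$, but the statement is phrased directly in terms of $\int_{h^{-1}(b)}^{a}h$, so no such substitution is strictly needed.

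Next I would apply Taylor's formula with Cauchy's integral remainder \eqref{Taylor-thm-Cauchy}, expanding $h$ about $h^{-1}(b)$ on the first piece and about $a$ on the second, and then integrate the two Taylor polynomials term by term. Using $\int_{h^{-1}(b)}^{t}(x-h^{-1}(b))^{i}\,\td x=\frac{(t-h^{-1}(b))^{i+1}}{i+1}$ and $\int_{t}^{a}(x-a)^{i}\,\td x=-\frac{(t-a)^{i+1}}{i+1}$ reproduces exactly the two sums appearing inside the absolute value, so that the remaining quantity to be estimated is the sum of the two integral remainders. Switching the order of integration (Fubini) in each double integral collapses them, after the elementary inner integrations, to $\frac{1}{(n+1)!}\int_{h^{-1}(b)}^{t}(t-s)^{n+1}h^{(n+1)}(s)\,\td s$ and $\frac{1}{(n+1)!}\int_{t}^{a}(t-s)^{n+1}h^{(n+1)}(s)\,\td s$; although these combine formally into one integral over $[h^{-1}(b),a]$, the kernel $(t-s)^{n+1}$ changes sign at $s=t$, which is precisely why the final bound carries two separate terms.

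The core of the argument is then the triangle inequality applied across $s=t$, followed by H\"older's integral inequality \eqref{Holder-Ineq-Eq} on each piece separately. For $1<p,q<\infty$ the computation $\int_{h^{-1}(b)}^{t}|t-s|^{(n+1)q}\,\td s=\frac{(t-h^{-1}(b))^{(n+1)q+1}}{(n+1)q+1}$, together with its analogue on $[t,a]$ and the identity $(n+1)q+1=nq+q+1$, yields the constant $\frac{(t-h^{-1}(b))^{n+1+1/q}+(a-t)^{n+1+1/q}}{(n+1)!\,\sqrt[q]{nq+q+1}}$, while enlarging each $L^p$ factor from $[h^{-1}(b),t]$ and $[t,a]$ to the full interval produces $\|h^{(n+1)}\|_{L^p([h^{-1}(b),a])}$; this gives \eqref{thm-est1}. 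The cases $p=\infty$ and $p=1$ follow by the same split, replacing H\"older by $\int(t-s)^{n+1}|h^{(n+1)}|\le\|h^{(n+1)}\|_{L^\infty}\int(t-s)^{n+1}$ and by $\int(t-s)^{n+1}|h^{(n+1)}|\le\max_{s}|t-s|^{n+1}\,\|h^{(n+1)}\|_{L^1}$, which generate the $(n+2)!$ and $(n+1)!$ denominators of \eqref{thm-est2} and \eqref{thm-est3} respectively. The coarser second inequality in each case then drops out of the monotone bounds $t-h^{-1}(b)\le a-h^{-1}(b)$ and $a-t\le a-h^{-1}(b)$ together with $\sqrt[q]{nq+q+1}\ge1$.

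I expect the only genuine obstacle to be the sign and orientation bookkeeping in the Fubini step on $[t,a]$, where the Cauchy remainder integral $\int_{a}^{x}$ runs backwards: one must confirm that after reversing limits and interchanging the order of integration this piece acquires the \emph{same} kernel $(t-s)^{n+1}$ as the piece on $[h^{-1}(b),t]$, so that the two remainders genuinely share a common form before the absolute value is taken. Once this is verified, everything else is routine integration, and the parity phenomena seen in the earlier Taylor‑based theorems do not reappear here because the left‑hand side is already wrapped in an absolute value.
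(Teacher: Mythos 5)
Your argument is correct, but it takes a genuinely different route from the paper's. The paper proves Theorem~\ref{norm=beog-polya} by simply quoting the ready-made two-endpoint $L^p$ estimates \eqref{est1}--\eqref{est3} from \cite{lpita-Nova-03} and \cite[Theorem~2]{Analysis_Mathematica-L_p.tex}, specializing them to $f=h$ on $[h^{-1}(b),a]$, and then invoking the identity \eqref{3integrals-eq}; no Taylor expansion, Fubini step, or H\"older application appears in the text. What you have done is reconstruct the proof of that quoted lemma from first principles: split at $t$, expand via \eqref{Taylor-thm-Cauchy} about each endpoint, interchange the order of integration to collapse both remainders to the common kernel $\frac{1}{(n+1)!}(t-s)^{n+1}h^{(n+1)}(s)$, apply the triangle inequality across $s=t$, and then use \eqref{Holder-Ineq-Eq} (respectively the $L^\infty$ and $L^1$ endpoint estimates) on each piece. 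Your sign bookkeeping checks out: on $[t,a]$ the reversed remainder $\frac1{n!}\int_a^x(x-s)^nh^{(n+1)}(s)\,\td s$ does integrate, after Fubini, to $\frac{1}{(n+1)!}\int_t^a(t-s)^{n+1}h^{(n+1)}(s)\,\td s$, and the two Taylor sums come out with exactly the signs in the statement; the identity $(n+1)q+1=nq+q+1$ gives the stated constant. Your proof is self-contained where the paper's is a citation, at the cost of length. One caveat, which you inherit from the quoted lemma rather than introduce: the final, coarser inequality in \eqref{thm-est1} does not follow merely from $t-h^{-1}(b)\le a-h^{-1}(b)$, $a-t\le a-h^{-1}(b)$, and $\sqrt[q]{nq+q+1}\ge1$, since that chain produces the exponent $n+1+1/q$ rather than $n+2$; when $a-h^{-1}(b)<1$ raising the exponent \emph{decreases} the bound, so this last step needs $a-h^{-1}(b)\ge1$ or a separate justification. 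The same remark applies to \eqref{est1} as quoted in the paper.
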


\begin{proof}
Let $f\in C^n([a,b])$ have derivative of $(n+1)$-th order on $(a,b)$ and $f^{(n+1)}\in L^p([a,b])$ for positive numbers $p$ and $q$ satisfying $\frac1p+\frac1q=1$. In~\cite{lpita-Nova-03} and~\cite[Theorem~2]{Analysis_Mathematica-L_p.tex}, it was established that, for any $t\in(a,b)$,
\begin{enumerate}
\item
when $p,q>1$, we have
\begin{multline}\label{est1}
\Biggl|\int_a^bf(x)\td x-\sum_{i=0}^{n}\frac{f^{(i)}(a)}{(i+1)!}(t-a)^{i+1} +\sum_{i=0}^{n}\frac{f^{(i)}(b)}{(i+1)!}(t-b)^{i+1}\Biggr|\\
\begin{aligned}
&\le\frac{(t-a)^{n+1+1/q}+(b-t)^{n+1+1/q}}{(n+1)!\sqrt[q]{nq+q+1}}\bigl\|f^{(n+1)}\bigr\|_{L^p([a,b])}\\
&\le\frac{2(b-a)^{n+2}}{(n+1)!}\bigl\|f^{(n+1)}\bigr\|_{L^p([a,b])};
\end{aligned}
\end{multline}
\item
when $p=\infty$, we have
\begin{multline}\label{est2}
\Biggl|\int_a^bf(x)\td x-\sum_{i=0}^{n}\frac{f^{(i)}(a)}{(i+1)!}(t-a)^{i+1} +\sum_{i=0}^{n}\frac{f^{(i)}(b)}{(i+1)!}(t-b)^{i+1}\Biggr|\\
\begin{aligned}
&\le\frac{(t-a)^{n+2}+(b-t)^{n+2}}{(n+2)!}\bigl\|f^{(n+1)}\bigr\|_{L^\infty([a,b])}\\
&\le\frac{2(b-a)^{n+2}}{(n+2)!}\bigl\|f^{(n+1)}\bigr\|_{L^\infty([a,b])};
\end{aligned}
\end{multline}
\item
when $p=1$, we have
\begin{multline}\label{est3}
\Biggl|\int_a^bf(x)\td x-\sum_{i=0}^{n}\frac{f^{(i)}(a)}{(i+1)!}(t-a)^{i+1} +\sum_{i=0}^{n}\frac{f^{(i)}(b)}{(i+1)!}(t-b)^{i+1}\Biggr|\\
\begin{aligned}
&\le\frac{(t-a)^{n+1}+(b-t)^{n+1}}{(n+1)!}\bigl\|f^{(n+1)}\bigr\|_{L([a,b])}\\
&\le\frac{2(b-a)^{n+1}}{(n+1)!}\bigl\|f^{(n+1)}\bigr\|_{L([a,b])}.
\end{aligned}
\end{multline}
\end{enumerate}
Applying three inequalities~\eqref{est1}, \eqref{est2}, and~\eqref{est3} to the integral $\int_{h^{-1}(b)}^{a}h(x)\td x$ and considering Remark~\ref{C-area-exp-unif-rem} lead to the following conclusions:
\begin{enumerate}
\item
when $p,q>1$, we have
\begin{multline}\label{h-est1}
\Biggl|\int_{h^{-1}(b)}^ah(x)\td x-\sum_{i=0}^{n}\frac{h^{(i)}\bigl(h^{-1}(b)\bigr)}{(i+1)!}\bigl(t-h^{-1}(b)\bigr)^{i+1} +\sum_{i=0}^{n}\frac{h^{(i)}(a)}{(i+1)!}(t-a)^{i+1}\Biggr|\\
\begin{aligned}
&\le\frac{\bigl(t-h^{-1}(b)\bigr)^{n+1+1/q}+(a-t)^{n+1+1/q}}{(n+1)!\sqrt[q]{nq+q+1}}\bigl\|h^{(n+1)}\bigr\|_{L^p([h^{-1}(b),a])}\\
&\le\frac{2\bigl(a-h^{-1}(b)\bigr)^{n+2}}{(n+1)!}\bigl\|h^{(n+1)}\bigr\|_{L^p([h^{-1}(b),a])};
\end{aligned}
\end{multline}
\item
when $p=\infty$, we have
\begin{multline}\label{h-est2}
\Biggl|\int_{h^{-1}(b)}^ah(x)\td x-\sum_{i=0}^{n}\frac{h^{(i)}\bigl(h^{-1}(b)\bigr)}{(i+1)!}\bigl(t-h^{-1}(b)\bigr)^{i+1} +\sum_{i=0}^{n}\frac{h^{(i)}(a)}{(i+1)!}(t-a)^{i+1}\Biggr|\\
\begin{aligned}
&\le\frac{\bigl(t-h^{-1}(b)\bigr)^{n+2}+(a-t)^{n+2}}{(n+2)!}\bigl\|h^{(n+1)}\bigr\|_{L^\infty([h^{-1}(b),a])}\\
&\le\frac{2\bigl(a-h^{-1}(b)\bigr)^{n+2}}{(n+2)!}\bigl\|h^{(n+1)}\bigr\|_{L^\infty([h^{-1}(b),a])};
\end{aligned}
\end{multline}
\item
when $p=1$, we have
\begin{multline}\label{h-est3}
\Biggl|\int_{h^{-1}(b)}^ah(x)\td x-\sum_{i=0}^{n}\frac{h^{(i)}\bigl(h^{-1}(b)\bigr)}{(i+1)!}\bigl(t-h^{-1}(b)\bigr)^{i+1} +\sum_{i=0}^{n}\frac{h^{(i)}(a)}{(i+1)!}(t-a)^{i+1}\Biggr|\\
\begin{aligned}
&\le\frac{\bigl(t-h^{-1}(b)\bigr)^{n+1}+(a-t)^{n+1}}{(n+1)!}\bigl\|h^{(n+1)}\bigr\|_{L([h^{-1}(b),a])}\\
&\le\frac{2\bigl(a-h^{-1}(b)\bigr)^{n+1}}{(n+1)!}\bigl\|h^{(n+1)}\bigr\|_{L([h^{-1}(b),a])}.
\end{aligned}
\end{multline}
\end{enumerate}
Substituting~\eqref{3integrals-eq} into~\eqref{h-est1}, \eqref{h-est2}, and~\eqref{h-est3} results in~\eqref{thm-est1}, \eqref{thm-est2}, and~\eqref{thm-est3}.
The proof of Theorem~\ref{norm=beog-polya} is complete.
\end{proof}

\subsection{Three examples for new refinements of Young's integral inequalities}

\subsubsection{First example}
Let $h(x)=\sqrt[4]{x^{4}+1}\,-1$ and let $a=3$ and $b=2$ in Theorem~\ref{polya-young-thm}. Then
\begin{gather*}
h'(x)=\frac{x^3}{(x^4+1)^{3/4}}, \quad h''(x)=\frac{3 x^2}{(x^4+1)^{7/4}}>0,\\
h^{-1}(2)=\bigr(3^4-1\bigr)^{1/4}=2\sqrt[4]{5}\,=2.990\dotsc,\\
L=h'\bigl(2\sqrt[4]{5}\,\bigr)=\frac{8\times5^{3/4}}{27}, \quad U=h'(3)=\frac{27}{82^{3/4}},\\
h(3)=\sqrt[4]{82}\,-1, \quad \int_0^3h(x)\td x=\int_0^3\sqrt[4]{x^{4}+1}\,\td x-3,\\
\int_0^2h^{-1}(x)\td x=\int_{0}^{2}\sqrt[4]{(x+1)^4-1}\,=\int_{1}^{3}\sqrt[4]{x^4-1}\td x,
\end{gather*}
and
\begin{gather*}
\frac{\left(\begin{gathered}\frac{8\times5^{3/4}}{27}\frac{27}{82^{3/4}} \bigl(3-2\sqrt[4]{5}\,\bigr)^2 +\bigl(\sqrt[4]{82}\,-3\bigr)^2\\
-2\bigl(3-2\sqrt[4]{5}\,\bigr)\biggl[\frac{8\times5^{3/4}}{27} \bigl(\sqrt[4]{82}\,-1\bigr)-\frac{2\times27}{82^{3/4}}\biggr]\end{gathered}\right)} {2\bigl(\frac{27}{82^{3/4}}-\frac{8\times5^{3/4}}{27}\bigr)}\\
\le\int_0^3\sqrt[4]{x^{4}+1}\,\td x+\int_{1}^{3}\sqrt[4]{x^4-1}\td x-3-4\sqrt[4]{5}\,\\
\le-\frac{\left(\begin{gathered}\frac{8\times5^{3/4}}{27}\frac{27}{82^{3/4}} \bigl(3-2\sqrt[4]{5}\,\bigr)^2 +\bigl(\sqrt[4]{82}\,-3\bigr)^2\\
-2\bigl(3-2\sqrt[4]{5}\,\bigr) \biggl[\frac{27}{82^{3/4}}\bigl(\sqrt[4]{82}\,-1\bigr) -\frac{2\times8\times5^{3/4}}{27}\biggr]\end{gathered}\right)} {2\bigl(\frac{27}{82^{3/4}}-\frac{8\times5^{3/4}}{27}\bigr)}.
\end{gather*}
Consequently, we arrive at
\begin{multline}\label{UL-gap-sqrt-power4}
9.00004286765564673\dotsc<\int_0^3\sqrt[4]{x^{4}+1}\,\td x+\int_{1}^{3}\sqrt[4]{x^4-1}\td x\\
<9.00004287010602764\dotsc
\end{multline}
which is neither the best nor the weakest estimate among those in Section~\ref{sec-three-examp}. The gap between the upper and lower bounds in the double inequality~\eqref{UL-gap-sqrt-power4} is $0.0000000024506\dotsc$ which, comparing with those gaps in Section~\ref{sec-three-examp}, is neither the smallest nor the biggest one.

\subsubsection{Second example}
Let
\begin{equation*}
h(x)=
\begin{cases}
e^{-1/x}, & x>0;\\
0, & x=0.
\end{cases}
\end{equation*}
Let $a=b=\frac12$ in Theorem~\ref{polya-young-thm}. Then
\begin{gather*}
h'(x)=\frac{e^{-1/x}}{x^2}, \quad h''(x)=\frac{e^{-1/x} (1-2 x)}{x^4},\\
h^{-1}\biggl(\frac12\biggr)=\frac1{\ln2}=1.44\dotsc,\quad h\biggl(\frac12\biggr)=\frac1{e^2},\\
U=h'\biggl(\frac12\biggr)=\frac4{e^2}=0.54134\dotsc, \quad L=h'\biggl(\frac1{\ln2}\biggr)=\frac{\ln^22}{2}=0.24022\dotsc,
\end{gather*}
and
\begin{gather*}
\frac{\frac{\ln^22}{2}\frac4{e^2}\bigl(\frac12-\frac1{\ln2}\bigr)^2 -2\bigl(\frac12-\frac1{\ln2}\bigr) \bigl(\frac{\ln^22}{2}\frac1{e^2}-\frac4{e^2}\frac12\bigr) +\bigl(\frac1{e^2}-\frac12\bigr)^2} {2\bigl(\frac4{e^2}-\frac{\ln^22}{2}\bigr)}\\
\le\int_0^{1/2}\frac1{e^{1/x}}\td x-\int_0^{1/2}\frac1{\ln x}\td x-\frac1{2\ln2}\\
\le-\frac{\frac{\ln^22}{2}\frac4{e^2} \bigl(\frac12-\frac1{\ln2}\bigr)^2-2\bigl(\frac12-\frac1{\ln2}\bigr) \bigl(\frac4{e^2}\frac1{e^2}-\frac{\ln^22}{2}\frac12\bigr) +\bigl(\frac1{e^2}-\frac12\bigr)^2}{2\bigl(\frac4{e^2}-\frac{\ln^22}{2}\bigr)}.
\end{gather*}
Accordingly, it follows that
\begin{multline}\label{UL-gap-exp-recip}
0.388457763460961578\dotsc<\int_0^{1/2}\frac1{e^{1/x}}\td x-\int_0^{1/2}\frac1{\ln x}\td x\\
<0.455309856619062079\dotsc
\end{multline}
whose lower bound is better, but whose upper bound is worse, than the corresponding ones in~\eqref{exp-recip-bounds}. The gap between the upper and lower bounds in the double inequality~\eqref{UL-gap-exp-recip} is $0.066852093209446\dotsc$ which is bigger than the gap $0.057414764502015\dotsc$ in the double inequality~\eqref{exp-recip-bounds}.

\subsubsection{Third example}
Let $h(x)=e^{x^2}-1$ for $x\ge0$. Then $h^{-1}(x)=\sqrt{\ln(1+x)}\,$ for $x\ge0$. Let $a=b=1$ in Theorem~\ref{polya-young-thm}. Then
\begin{gather*}
h'(x)=2xe^{x^2}, \quad h^{-1}(1)=\sqrt{\ln2}\,=0.83255\dotsc,\quad h(1)=e-1,\\
U=h'(1)=2e=5.4365\dotsc, \quad L=h'\bigl(\sqrt{\ln2}\bigr)=4\sqrt{\ln2}\,=3.3302\dotsc,
\end{gather*}
and
\begin{gather*}
\frac{8e\sqrt{\ln2}\,\bigl(1-\sqrt{\ln2}\,\bigr)^2-2\bigl(1-\sqrt{\ln2}\,\bigr)\bigl[4\sqrt{\ln2}\,(e-1)-2e\bigr] +(e-2)^2}{2\bigl(2e-4\sqrt{\ln2}\,\bigr)}\\
\le\int_0^1\bigl(e^{x^2}-1\bigr)\td x+\int_0^1\sqrt{\ln(1+x)}\,\td x-\sqrt{\ln2}\,\\
\le-\frac{8e\sqrt{\ln2}\,\bigl(1-\sqrt{\ln2}\,\bigr)^2-2\bigl(1-\sqrt{\ln2}\,\bigr) \bigl[2e(e-1)-4\sqrt{\ln2}\,\bigr] +(e-2)^2}{2\bigl(2e-4\sqrt{\ln2}\,\bigr)}.
\end{gather*}
As a result, we have
\begin{multline}\label{UL-gap-exp-squ}
2.05281277502489567\dotsc\le\int_0^1e^{x^2}\td x+\int_0^1\sqrt{\ln(1+x)}\,\td x\\
\le2.06746020503978898\dotsc
\end{multline}
whose lower bound is better, but whose upper bound is worse, than the corresponding ones in~\eqref{exp-log-sqrt-ineq}. The gap between the upper and lower bounds in the double inequality~\eqref{UL-gap-exp-squ} is $0.01464743001489\dotsc$ which is smaller than the corresponding gap $0.01578469\dotsc$ in the double inequality~\eqref{exp-log-sqrt-ineq}.

\section{More remarks}

Finally, we would like to list more remarks on our main results and possible developing directions.

\begin{rem}
Theorems~\ref{polya-young-thm} and~\ref{second-der-thm} are special cases of Theorem~\ref{further=beog-polya}. In other words, Theorems~\ref{polya-young-thm} and~\ref{second-der-thm} can be deduced from Theorem~\ref{further=beog-polya}.
\end{rem}

\begin{rem}
Some Taylor-like power expansions such as those in~\cite{Drag-Qi-Hanna-Cer-Kor-2005, Petr.tex, darboux.tex, 4180520-MS.tex} and closely related references can be used to refine Young's integral inequality~\eqref{Young-eq1}.
\end{rem}

\begin{rem}
At the present position, we conclude that many estimates of definite integrals can be used to refine Young's integral inequality~\eqref{Young-eq1}.
\end{rem}

\begin{rem}
Essentially speaking, all refinements in this paper are estimates of the area $C$ which can be geometrically demonstrated in Figures~\ref{Young-Geom-M-1.jpg} to~\ref{Young-Geom-C-2-1.jpg} and analytically expressed by~\eqref{C-area-exp-unif} in Remark~\ref{C-area-exp-unif-rem}.
\end{rem}


\begin{thebibliography}{99}

\bibitem{rpss}
R. P. Agarwal and S. S. Dragomir, \textit{An application of Hayashi inequality for differentiable functions}, Computers Math. Appl. \textbf{32} (1996), no.~6, 95\nobreakdash--99; available online at \url{https://doi.org/10.1016/0898-1221(96)00146-0}.

\bibitem{alzer-kwong-jmaa-2019}
H. Alzer and M. K. Kwong, \emph{On Young's inequality}, J. Math. Anal. Appl. \textbf{469} (2019), no.~2, 480\nobreakdash--492; available online at \url{https://doi.org/10.1016/j.jmaa.2018.06.061}.

\bibitem{alzer-koumandos-edinb-2007}
H. Alzer and S. Koumandos, \emph{A new refinement of Young's inequality}, Proc. Edinb. Math. Soc. (2) \textbf{50} (2007), no.~2, 255\nobreakdash--262; available online at \url{https://doi.org/10.1017/S0013091504000744}.

\bibitem{Anderson-jipam}
D. R. Anderson, \textit{Young's integral inequality on time scales revisited}, J. Inequal. Pure Appl. Math. \textbf{8} (2007), no.~3, Art.~64; available online at \url{http://www.emis.de/journals/JIPAM/article876.html}.

\bibitem{Apostol-Cal-1967}
T. M. Apostol, \emph{Calculus}, Vol.~I: One-variable calculus, with an introduction to linear algebra; Second edition, Blaisdell Publishing Co. Ginn and Co., Waltham, Mass.-Toronto, Ont.-London, 1967.

\bibitem{Apostol-MA-1974}
T. M. Apostol, \emph{Mathematical Analysis}, Second edition, Addison-Wesley Publishing Co., Reading, Mass.-London-Don Mills, Ont., 1974.

\bibitem{JIAP-D-19-00904.tex}
S.-P. Bai, S.-H. Wang, and F. Qi, \textit{On HT-convexity and Hadamard-type inequalities}, J. Inequal. Appl. \textbf{2020}, Paper No.~3, 12~pages; available online at \url{https://doi.org/10.1186/s13660-019-2276-3}.

\bibitem{pcerone}
P. Cerone, \textit{Generalised trapezoidal rules with error involving bounds of the $n$-th derivative}, Math. Inequal. Appl. \textbf{5} (2002), no.~3, 451\nobreakdash--462; available online at \url{https://doi.org/10.7153/mia-05-44}.

\bibitem{Cerone-Hacet-2015}
P. Cerone, \emph{On Gini mean difference bounds via generalised Iyengar results}, Hacet. J. Math. Stat. \textbf{44} (2015), no.~4, 789\nobreakdash--799; available online at \url{https://doi.org/10.15672/HJMS.2015449430}.

\bibitem{cer1}
P. Cerone and S. S. Dragomir, \textit{Lobatto type quadrature rules for functions with bounded derivative}, Math. Inequal. Appl. \textbf{3} (2000), no.~2, 197\nobreakdash--209; available online at \url{https://doi.org/10.7153/mia-03-23}.

\bibitem{Cerone-Dragomir-MIA2000}
P. Cerone and S. S. Dragomir, \emph{On a weighted generalization of Iyengar type inequalities involving bounded first derivative}, Math. Inequal. Appl. \textbf{3} (2000), no.~1, 35\nobreakdash--44; available online at \url{https://doi.org/10.7153/mia-03-04}.

\bibitem{Cheng-AML-2001}
X.-L. Cheng, \emph{The Iyengar-type inequality}, Appl. Math. Lett. \textbf{14} (2001), no.~8, 975\nobreakdash--978; available online at \url{https://doi.org/10.1016/S0893-9659(01)00074-X}.

\bibitem{Dragomir-RACSAM-2017}
S. S. Dragomir, \emph{A note on Young's inequality}, Rev. R. Acad. Cienc. Exactas F\'is. Nat. Ser. A Mat. RACSAM \textbf{111} (2017), no.~2, 349\nobreakdash--354; available online at \url{https://doi.org/10.1007/s13398-016-0300-8}.

\bibitem{Dragomir-2018-spec-matrices}
S. S. Dragomir, \emph{Trace inequalities for positive operators via recent refinements and reverses of Young's inequality}, Spec. Matrices \textbf{6} (2018), 180\nobreakdash--192; available online at \url{https://doi.org/10.1515/spma-2018-0015}.

\bibitem{Drag-Qi-Hanna-Cer-Kor-2005}
S. S. Dragomir, F. Qi, G. Hanna, and P. Cerone, \textit{New Taylor-like expansions for functions of two variables and estimates of their remainders}, J. Korean Soc. Indust. Appl. Math. \textbf{9} (2005), no.~2, 1\nobreakdash--16.

\bibitem{lpita-Nova-03}
B.-N. Guo and F. Qi, \textit{Estimates for an integral in $L^p$ norm of the $(n+1)$-th derivative of its integrand}, Inequality Theory and Applications, Volume \textbf{3}, 127\nobreakdash--131, Nova Science Publishers, Hauppauge, NY, 2003.

\bibitem{Analysis_Mathematica-L_p.tex}
B.-N. Guo and F. Qi, \textit{Some estimates of an integral in terms of the $L^p$-norm of the $(n+1)$st derivative of its integrand}, Anal. Math. \textbf{29} (2003), no.~1, 1\nobreakdash--6; available online at \url{http://dx.doi.org/10.1023/A:1022894413541}.

\bibitem{Young-Hoorfar-Qi.tex}
A. Hoorfar and F. Qi, \textit{A new refinement of Young's inequality}, Math. Inequal. Appl. \textbf{11} (2008), no.~4, 689\nobreakdash--692; available online at \url{https://doi.org/10.7153/mia-11-58}.

\bibitem{Jak-Pecaric-2009-AEJM}
J. Jak\v{s}eti\'c and J. Pe\v{c}ari\'c, \emph{An estimation of Young inequality}, Asian-Eur. J. Math. \textbf{2} (2009), no.~4, 593\nobreakdash--604; available online at \url{https://doi.org/10.1142/S1793557109000509}.

\bibitem{MIA-2010-13-43}
J. Jak\v{s}eti\'c and J. Pe\v{c}ari\'c, \emph{A note on Young inequality}, Math. Inequal. Appl. \textbf{13} (2010), no.~1, 43\nobreakdash--48; available online at \url{https://doi.org/10.7153/mia-13-03}.

\bibitem{Korus-Hungar-2017}
P. K\'orus, \emph{A refinement of Young's inequality}, Acta Math. Hungar. \textbf{153} (2017), no.~2, 430\nobreakdash--435; available online at \url{https://doi.org/10.1007/s10474-017-0735-1}.

\bibitem{Petr.tex}
Q.-M. Luo, F. Qi, and B.-N. Guo, \textit{K. Petr's formula of double integral and estimates of its remainder}, Int. J. Math. Sci. \textbf{3} (2004), no.~1, 77\nobreakdash--92.

\bibitem{Mercer-2008-JMI}
P. R. Mercer, \emph{Error terms for Steffensen's, Young's and Chebychev's inequalities}, J. Math. Inequal. \textbf{2} (2008), no.~4, 479\nobreakdash--486; available online at \url{https://doi.org/10.7153/jmi-02-43}.

\bibitem{Mercer-2014-UTM}
P. R. Mercer, \emph{Techniques of Integration}, Chapter~11 in: \emph{More Calculus of a Single Variable}, Undergraduate Texts in Mathematics, Springer, New York, 2014; available online at \url{https://doi.org/10.1007/978-1-4939-1926-0_11}.

\bibitem{mit}
D. S. Mitrinovi\'c, \textit{Analytic Inequalities}, In cooperation with P. M. Vasi\'c, Die Grundlehren der mathematischen Wissenschaften, Band 165, Springer-Verlag, New York-Berlin, 1970.

\bibitem{mpf-1993}
D. S. Mitrinovi\'c, J. E. Pe\v{c}ari\'c, and A. M. Fink, \textit{Classical and New Inequalities in Analysis}, Kluwer Academic Publishers, 1993; available online at \url{http://dx.doi.org/10.1007/978-94-017-1043-5}.

\bibitem{NIST-HB-2010}
F. W. J. Olver, D. W. Lozier, R. F. Boisvert, and C. W. Clark (eds.), \emph{NIST Handbook of Mathematical Functions}, Cambridge University Press, New York, 2010; available online at \url{http://dlmf.nist.gov/}.

\bibitem{Pachpatte-RGMIA-03}
B. G. Pachpatte, \emph{On some inequalities for convex functions}, RGMIA Res. Rep. Coll. \textbf{6} (2003), Suppl., Art.~1, 9~pages; available online at \url{http://rgmia.org/v6(E).php}.

\bibitem{YUGOS.TEX}
F. Qi, \textit{Further generalizations of inequalities for an integral}, Univ. Beograd. Publ. Elektrotehn. Fak. Ser. Mat. \textbf{8} (1997), 79\nobreakdash--83.

\bibitem{hung-acta-99}
F. Qi, \textit{Inequalities for a multiple integral}, Acta Math. Hungar. \textbf{84} (1999), no.~1\nobreakdash-2, 19\nobreakdash--26; available online at \url{https://doi.org/10.1023/A:1006642601341}.

\bibitem{Mult-Qi-JMAA}
F. Qi, \textit{Inequalities for a weighted multiple integral}, J. Math. Anal. Appl. \textbf{253} (2001), no.~2, 381\nobreakdash--388; available online at \url{https://doi.org/10.1006/jmaa.2000.7138}.

\bibitem{gaz}
F. Qi, \textit{Inequalities for an integral}, Math. Gaz. \textbf{80} (1996), no.~488, 376\nobreakdash--377; available online at \url{https://doi.org/10.2307/3619581}.

\bibitem{Polya-MIA-3829.tex}
F. Qi, \textit{P\'olya type integral inequalities: origin, variants, proofs, refinements, generalizations, equivalences, and applications}, Math. Inequal. Appl. \textbf{18} (2015), no.~1, 1\nobreakdash--38; available online at \url{http://dx.doi.org/10.7153/mia-18-01}.

\bibitem{Some-New-Iyengar-Type-Inequalities.tex}
F. Qi, P. Cerone, and S. S. Dragomir, \textit{Some new Iyengar type inequalities}, Rocky Mountain J. Math. \textbf{35} (2005), no.~3, 997\nobreakdash--1015; available online at \url{https://doi.org/10.1216/rmjm/1181069718}.

\bibitem{tcheby-l.tex}
F. Qi, L.-H. Cui, and S.-L. Xu, \textit{Some inequalities constructed by Tchebysheff's integral inequality}, Math. Inequal. Appl. \textbf{2} (1999), no.~4, 517\nobreakdash--528; available online at \url{http://dx.doi.org/10.7153/mia-02-42}.

\bibitem{darboux.tex}
F. Qi, Q.-M. Luo, and B.-N. Guo, \textit{Darboux's formula with integral remainder of functions with two independent variables}, Appl. Math. Comput. \textbf{199} (2008), no.~2, 691\nobreakdash--703; available online at \url{https://doi.org/10.1016/j.amc.2007.10.028}.

\bibitem{Psh-Kurd-JIA.tex}
F. Qi, P. O. Mohammed, J.-C. Yao, and Y.-H. Yao, \textit{Generalized fractional integral inequalities of Hermite--Hadamard type for $(\alpha,m)$-convex functions}, J. Inequal. Appl. \textbf{2019}, Paper No.~135, 17~pages; available online at \url{https://doi.org/10.1186/s13660-019-2079-6}.

\bibitem{Symmetry-Qi-Du-Nisar.tex}
F. Qi, G. Rahman, S. M. Hussain, W.-S. Du, and Kottakkaran Sooppy Nisar, \textit{Some inequalities of \v{C}eby\v{s}ev type for conformable $k$-fractional integral operators}, Symmetry \textbf{10} (2018), no.~11, Article~614, 8~pages; available online at \url{https://doi.org/10.3390/sym10110614}.

\bibitem{4180520-MS.tex}
F. Qi and W. Ul-Haq, \textit{Some integral inequalities involving the expectation and variance via Darboux's expansion}, Adv. Appl. Math. Sci. \textbf{18} (2019), no.~7, 545\nobreakdash--552.

\bibitem{difference-Hermite--Hadamard.tex}
F. Qi, Z.-L. Wei, and Q. Yang, \textit{Generalizations and refinements of Hermite--Hadamard's inequality}, Rocky Mountain J. Math. \textbf{35} (2005), no.~1, 235\nobreakdash--251; available online at \url{http://dx.doi.org/10.1216/rmjm/1181069779}.

\bibitem{ineq-weighted-integral.tex}
F. Qi and Y.-J. Zhang, \textit{Inequalities for a weighted integral}, Adv. Stud. Contemp. Math. (Kyungshang) \textbf{4} (2002), no.~2, 93\nobreakdash--101.

\bibitem{Ruthing-Young}
D. Ruthing, \textit{On Young's inequality}, Internat. J. Math. Ed. Sci. Techn. \textbf{25} (1994), no.~2, 161\nobreakdash--164; available online at \url{https://doi.org/10.1080/0020739940250201}.

\bibitem{Sandor-Szabo-JMAA-1996}
J. S\'andor and V. E. S. Szab\'o, \emph{On an inequality for the sum of infimums of functions}, J. Math. Anal. Appl. \textbf{204} (1996), no.~3, 646\nobreakdash--654; available online at \url{https://doi.org/10.1006/jmaa.1996.0459}.

\bibitem{mathematics-360953.tex}
Y. Shuang and F. Qi, \textit{Integral inequalities of Hermite--Hadamard type for extended $s$-convex functions and applications}, Mathematics \textbf{6} (2018), no.~11, Article~223, 12~pages; available online at \url{https://doi.org/10.3390/math6110223}.

\bibitem{S-Y-Q-n-D.tex}
Y. Sun, H.-T. Yang, and F. Qi, \textit{Some inequalities for multiple integrals on the $n$-dimensional ellipsoid, spherical shell, and ball}, Abstr. Appl. Anal. \textbf{2013} (2013), Article ID 904721, 8~pages; available online at \url{https://doi.org/10.1155/2013/904721}.

\bibitem{T.32}
T. Takahashi, \textit{Remarks on some inequalities}, T\^ohoku Math. J. \textbf{36} (1932), 99\nobreakdash--106.

\bibitem{Tian-Holder-Jia-2011}
J.-F. Tian, \emph{Extension of Hu Ke's inequality and its applications}, J. Inequal. Appl. \textbf{2011}, 2011:77, 14~pages; available online at \url{https://doi.org/10.1186/1029-242X-2011-77}.

\bibitem{Tian-Ha-JMI-2017}
J.-F. Tian and M.-H. Ha, \textit{Properties of generalized sharp H\"older's inequalities}, J. Math. Inequal. \textbf{11} (2017), no.~2, 511\nobreakdash--525; available online at \url{https://doi.org/10.7153/jmi-11-42}.

\bibitem{Young-Ineq.tex}
J.-Q. Wang, B.-N. Guo, and F. Qi, \textit{Generalizations and applications of Young's integral inequality by higher order derivatives}, J. Inequal. Appl. \textbf{2019}, Paper No.~243, 18~pages; available online at \url{https://doi.org/10.1186/s13660-019-2196-2}.

\bibitem{W.06}
A. Witkowski, \textit{On Young inequality}, J. Inequal. Pure Appl. Math. \textbf{7} (2006), no.~5, Art.~164; available online at \url{http://www.emis.de/journals/JIPAM/article782.html}.

\bibitem{Wu-Qi-Maejo.tex}
Y. Wu, F. Qi, and D.-W. Niu, \textit{Integral inequalities of Hermite--Hadamard type for the product of strongly logarithmically convex and other convex functions}, Maejo Int. J. Sci. Technol. \textbf{9} (2015), no.~3, 394\nobreakdash--402.

\bibitem{Hong-Ping-Qi-JNSA.tex}
H.-P. Yin and F. Qi, \textit{Hermite--Hadamard type inequalities for the product of $(\alpha,m)$-convex functions}, J. Nonlinear Sci. Appl. \textbf{8} (2015), no.~3, 231\nobreakdash--236; available online at \url{https://doi.org/10.22436/jnsa.008.03.07}.

\bibitem{H-H-(am)-YHP-Missouri.tex}
H.-P. Yin and F. Qi, \textit{Hermite-Hadamard type inequalities for the product of $(\alpha,m)$-convex functions}, Missouri J. Math. Sci. \textbf{27} (2015), no.~1, 71\nobreakdash--79; available online at \url{http://projecteuclid.org/euclid.mjms/1449161369}.

\bibitem{Young-Roy-London-1912}
W. H. Young, \textit{On classes of summable functions and their Fourier series}, Proc. Roy. Soc. London Ser. A \textbf{87} (1912), 225\nobreakdash--229; available online at \url{https://doi.org/10.1098/rspa.1912.0076}.

\bibitem{Zhu-Young}
L. Zhu, \textit{On Young's inequality}, Internat. J. Math. Ed. Sci. Tech. \textbf{35} (2004), no.~4, 601\nobreakdash--603; available online at \url{https://doi.org/10.1080/00207390410001686698}.

\end{thebibliography}
\end{document}